\tikzset{nomorepostaction/.code=\let\tikz@postactions\pgfutil@empty}
\tikzset{every loop/.style={min distance=10mm,looseness=10}}
\newtheorem{theorem}{Theorem}[section]
\newtheorem{proposition}[theorem]{Proposition}
\newtheorem{lemma}[theorem]{Lemma}
\newtheorem*{theorem*}{``Theorem''}
\numberwithin{equation}{section}
\theoremstyle{definition}
\newenvironment{example}
  {\pushQED{\qed}\examplex}
  {\popQED\endexamplex}
\theoremstyle{remark}
\newtheorem{remark}[theorem]{Remark}
\newtheorem{remarks}[theorem]{Remarks}
\newtheorem*{remark*}{Remark}
\newcommand{\1}[1]{{\mathbbm 1}\mkern-0.7mu{\left\{#1\right\}}}
\newcommand{\2}[1]{{\mathbbm 1}_{#1}}
\newcommand{\R}{\ensuremath{\mathbb{R}}}
\newcommand{\N}{\ensuremath{\mathbb{N}}}
\newcommand{\ZP}{\ensuremath{\mathbb{Z}}_+}
\newcommand{\RP}{\ensuremath{\mathbb{R}}_+}
\DeclareMathOperator{\Exp}{\mathbb{E}}
\let\Pr\relax
\DeclareMathOperator{\Pr}{\mathbb{P}}
\DeclareMathOperator{\sign}{sgn} 
\DeclareMathOperator{\Ima}{ran}
\newcommand{\tv}{\mathrm{TV}}
\newcommand{\eps}{\varepsilon}
\newcommand{\pii}{\uppi}
\newcommand{\rc}{{\mathrm{c}}}
\newcommand{\ud}{{\mathrm d}}
\newcommand{\cB}{{\mathcal B}}
\newcommand{\cF}{{\mathcal F}}
\newcommand{\cK}{{\mathcal K}}
\newcommand{\cL}{{\mathcal L}}
\newcommand{\cP}{{\mathcal P}}
\newcommand{\cX}{{\mathcal X}}
\newcommand{\bbH}{{\mathbb H}}
\newcommand{\bbX}{\mathbb{X}}
\newcommand{\bigmid}{\; \bigl| \;}
\newcommand{\as}{~\text{a.s.}}
\newcommand{\meas}{\Gamma}
\newcommand{\abs}[1]{\left\vert #1\right\vert}
\newcommand{\norm}[1]{\left\Vert #1\right\Vert}
\newcommand{\Kb}{\cK} 
\newcommand{\Ksp}{\cK^\circ_{\mathrm{b}}}
\newcommand{\Ks}{\cK_{\mathrm{s}}}
\newcommand{\Kss}{\Ks^\circ}
\newcommand{\Ksr}{\Ks^\flat}
\newcommand{\Sigmas}{\Sigma^\star} 
\newcommand{\gR}{\mathcal{D}_\gamma}
\newcommand{\rot}{\textrm{Rot}} 
\newcommand{\oa}{\beta}
\newcommand{\gammacr}{\gamma_{\mathrm c}} 
\newcommand{\gammacs}{\gamma_{{\mathrm c},0}} 
\newcommand{\gammag}{\gamma_{\mathrm c}} 
\newcommand{\Meass}{\mathcal{M}_{\pm}}
\newcommand{\Cb}{{C}_{\mathrm{b}}}
\newcommand{\Mb}{{M}_{\mathrm{b}}}
\newcommand{\Ts}{T^*}
\newcommand{\te}{\tilde{e}}
\newcommand{\tsigma}{\tilde{\sigma}^2}
\newcommand{\txi}{\tilde{\xi}}
\newcommand{\teta}{\tilde{\eta}}
\newcommand{\tW}{\tilde{W}}
\newcommand{\tX}{\tilde{X}}
\newcommand{\cmu}{\mu^\circ}
\newcommand{\tmu}{\tilde{\mu}}
\newcommand{\tdelta}{\tilde{\delta}}
\newcommand{\tcmu}{\tilde{\mu}^\circ}
\newcommand{\pib}{\varpi}
\newlist{remenumi}{enumerate}{10}
\setlist[remenumi]{leftmargin=0pt, labelindent=\parindent, listparindent=\parindent, labelwidth=0pt, itemindent=!, itemsep=4pt, parsep=0pt, label=(\alph*)}
\def\namedlabel#1#2{\begingroup  
    (#2)%
    \def\@currentlabel{#2}%
    \phantomsection\label{#1}\endgroup
}
\title{Stochastic billiards with Markovian reflections\\ in generalized parabolic domains}
\author{Conrado da Costa\footnote{Department of Mathematical Sciences, Durham University, Upper Mountjoy Campus, Durham DH1 3LE, UK.
} \and Mikhail V.\ Menshikov\footnotemark[1] \and Andrew R.\ Wade\footnotemark[1]}
\date{11 March 2023}
\begin{document}

\maketitle

\begin{abstract}
  We study recurrence and transience for a particle that moves at
  constant velocity in the interior of an unbounded planar
  domain, with random reflections at the boundary governed by a Markov
  kernel producing outgoing angles from incoming angles.  Our domains
  have a single unbounded direction and sub-linear growth.  We
  characterize recurrence in terms of the reflection kernel and growth
  rate of the domain.  The results are obtained by transforming the
  stochastic billiards model to a Markov chain on a half-strip
  $\RP \times S$ where $S$ is a compact set.  We develop the
  recurrence classification for such processes in the near-critical
  regime in which drifts of the $\RP$ component are of generalized
  Lamperti type, and the $S$ component is asymptotically Markov; this
  extends earlier work that dealt with finite~$S$.
\end{abstract}

\medskip

\noindent
{\em Key words:}
Stochastic billiards;
Markov reflection; 
horn-shaped domain; recurrence classification;
non-homogeneous random walk;
half-strip.

\medskip

\noindent 
{\em AMS Subject Classification:}
60J05 (Primary) 
60J25, 
60K35, 
60K50 (Secondary) 

\tableofcontents

\section{Introduction} 
\label{sec:intro}
\subsection{Overview}
\label{sec:overview}

Billiards models arise from study of the dynamics of ideal gas
molecules in containers or from optical reflectors (see
Section~\ref{sec:billiards-motivation} below).  For a
parameter $\gamma \in(0,1)$, define an unbounded \emph{generalized
  parabolic} or \emph{horn-shaped} planar domain $\gR$ by
\begin{equation}\label{eq:region-def} 
  \gR := \left\{ (x,y) \in \RP \times \R \colon  \abs{y} \leq x^\gamma \right\};
\end{equation}
here $\RP := [0,\infty)$. 
Suppose that a point
particle moves at unit speed in $\gR$.  In the interior, the
particle's velocity is constant, so it travels in straight lines, and
it reflects instantaneously and randomly when it hits the boundary.
The reflection is governed by a Markovian kernel $\Kb$, that defines
the outgoing angle distribution for each incoming angle, where both
angles are measured relative to the inwards-pointing normal.  We give
a more formal definition in Section~\ref{sec:billiards}.  See
Figure~\ref{F:region} for a picture.

\begin{figure}[!ht]
    \centering
          \begin{tikzpicture}[domain=0:6, scale = 0.7]
\filldraw (0,0) circle (1.5pt);
\node at (-0.25,0) {$0$};
\draw[black, line width = 0.40mm]   plot[smooth,domain=0:8,samples=500] ({\x},  {(\x)^(1/2)});
\draw[black, line width = 0.40mm]   plot[smooth,domain=0:8,samples=500] ({\x},  {-(\x)^(1/2)});
\draw[black,->,>=stealth,dashed] (0,0) -- (8,0);
\node at (8.3, 0)       {$x$};
\draw[black,->,>=stealth,dashed] (0,-3) -- (0,3);
\node at (0,3.3)       {$y$};
\draw[double] (4.5,1.5) arc (-45:-67.5:1);
\draw (4.24,1) arc (-75:-108:1);
\draw (4,2) -- (7,2.75);
\draw (1,1.25) -- (4,2);
\draw (4,2) -- (4.3,0.8);
\draw (3.8,1.95) -- (3.86,1.71);
\draw (4.06,1.76) -- (3.86,1.71);
\node at (7, 2)       {$y = x^{\gamma}$};
\node at (7, -2)      {$y = -x^{\gamma}$};
\draw[black,->,>=stealth, line width=1.3pt, line cap=round, dash pattern=on 0pt off 1.5\pgflinewidth] (4,2) -- (5,1);
\draw[black,-, line width=1.3pt, line cap=round, dash pattern=on 0pt off 1.5\pgflinewidth] (3,-1) -- (4,2);
\end{tikzpicture}
\label{fig:parabola}
\caption{Part of the region~$\gR$ with $\gamma = 1/2$.
  A section of the particle's trajectory is
  indicated by the dotted line.  It hits the boundary at the incoming
  angle indicated by the single-ruled angle, and exits at the angle
  indicated by the double-ruled angle, whose distribution is
  determined by the incoming angle according to a kernel~$\Kb$.  }
\label{F:region}
\end{figure}
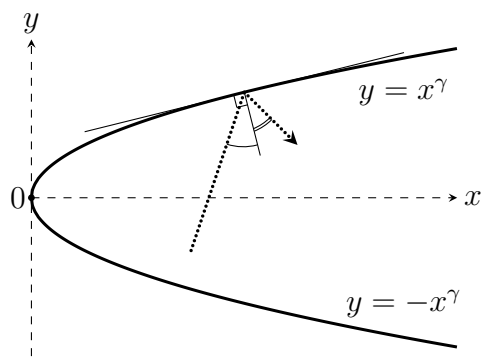

The resulting process is a \emph{stochastic billiards} model with
Markovian reflection; we discuss motivation and related prior work in
Section~\ref{sec:billiards-motivation} below.  At the time of the
$n$th boundary collision, denote by $Z_n \in \RP$ the particle's
horizontal location, and denote  by $\alpha_n \in S:=[-
\frac{\pi}{2}, \frac{\pi}{2} ]$ the incoming angle. 
Then $(Z_n, \alpha_n)$ is a discrete-time Markov chain on $\RP \times
S$.  We aim to establish conditions under which \emph{transience} or
\emph{recurrence} occur, i.e., $\lim_{n \to \infty} Z_n =
\infty$, a.s., or $\liminf_{n \to \infty} Z_n <
\infty$, a.s, respectively.  The classification depends on the
properties of the transition
kernel~$\Kb$ that regulates the reflection at the boundary and on the
growth parameter~$\gamma$.  The case where
$\Kb$ is independent of the incoming angle (i.e., reflections are
i.i.d.)  was considered in~\cite{mvw,MenPopWad16}.

Mild conditions (e.g., appropriate irreducibility) for the reflection kernel $\Kb$ on the compact space $S$ guarantee a unique invariant measure, and we make a density assumption to avoid the possibility of the trajectory of the billiards process hitting the boundary only finitely many times. 
Thus we work in the setting where the reflection kernel~$\Kb$ has a unique
invariant probability density~$\pib$ on~$S$. 
We further assume some mild regularity conditions
that include the reflection angles
being uniformly bounded away from
$\pm\frac{\pi}{2}$.

We will see that the critical regime for this model has
$\int_S \pib (\beta) \tan \beta\, \ud \beta =0$,
corresponding to an asymptotically zero effective drift induced by the
reflections. The main results of this paper on the Markovian billiards model, Theorems~\ref{thm:billiards-strict-lamperti} and~\ref{thm:billiards-general-lamperti}, may be informally
summarized in terms of the following phase transition.

\begin{theorem*}
Suppose that $\int_S \pib (\beta) \tan \beta\, \ud \beta =0$. 
Then, under appropriate conditions, there is a critical value $\gammacr \in [0,1]$, depending on $\Kb$, such that if $\gamma < \gammacr$ the 
stochastic billiards process is recurrent and if $\gamma > \gammacr$ it is transient.
\end{theorem*}

Section~\ref{sec:billiards} below gives 
details of our assumptions, the formal statements of the results, 
and remarks on possible extensions and generalizations,
including the critical case $\gamma =\gammacr$.
The fact that the recurrence phase transition is located in the parameter domain $\gamma \in (0,1)$
is to be expected, under mild conditions,
since in the case $\gamma =0$ (a flat tube) 
the condition $\int_S \pib (\beta) \tan \beta\, \ud \beta =0$ ensures that there is zero averaged drift in the horizontal direction, 
while for $\gamma \geq 1$ (a wedge, or wider), on each boundary reflection the
process will escape to infinity with positive probability.

Under the stronger condition
$\int_S \Kb ( \alpha , \ud \beta) \tan \beta = 0$
for all $\alpha \in S$, which is the case, for example, if every
reflection distribution is symmetric around the normal vector, then~$\gammacr \in (0,1/2)$ and 
the description of $\gammacr$ is rather simple (and constructive),
involving the reflection kernel $\Kb$ only through its stationary density~$\pib$:
see~\eqref{eq:critical-parameter-strict} below.  Otherwise, the
description of~$\gammacr$ exhibits more complex dependence on~$\Kb$:
see~\eqref{eq:critical-parameter-general}.

We analyse the stochastic billiards problem via a transformation 
to a spatially non-homogeneous Markov process
$\xi_n = (X_n, \alpha_n)$ on the \emph{half-strip}
$\Sigma := \RP \times S$, where $X_n := Z_n^{1-\gamma}$. The scaling is
such that the increments of $X_n$ have variance bounded away from $0$
and $\infty$, in which case
the half-strip model falls into a
(generalized) \emph{Lamperti regime} where the effective drift at
$X_n = x$ is of order $1/x$; the terminology is by analogy with
Lamperti's fundamental work on the classification of near-critical
processes on~$\RP$~\cite{lamp}.  Half-strip processes have their own
interest and history: see Section~\ref{sec:strips-motivation} below.

If our billiards model lived in a flat tube ($\gamma =0$)
then~$\alpha_n$ would be itself a Markov chain and the corresponding
strip model would be spatially homogeneous;
the curvature of our $\gamma \in (0,1)$ domain $\gR$ given by~\eqref{eq:region-def}, ensures that~$\alpha_n$ is
only \emph{asymptotically} Markov, in a sense that we make precise
below, since the incoming angle at a reflection is a small perturbation of the preceding outgoing angle.  Processes on the half-strip for which the second co-ordinate
is asymptotically Markov have been investigated
in~\cite{falin,GeoWad14} (the constant drift case) and
\cite{GeoWad14,LoWade17} (the Lamperti case) when $S$ is
\emph{finite}.  Here we extend the classification to the case where
$S$ is a \emph{compact} metric space, such as the interval
$[ - \frac{\pi}{2}, \frac{\pi}{2} ]$.
We use a Lyapunov function approach, similar to~\cite{LoWade17}, but
for existence of suitable Lyapunov functions we must replace
finite-dimensional linear algebra with some theory of linear
operators.

 In this paper, we consider~\eqref{eq:region-def} for $\gamma >0$, so
 $\gR$ is planar and grows asymptotically in the axial direction. Extensions to higher dimensions, and/or domains that
 contract asymptotically, are of interest but need significantly different analysis;
 see Remarks~\ref{rmks:extensions}. Further possible generalizations, of a more technical nature, are discussed in Remarks~\ref{rems:billiards-real} below.
 
 \begin{remarks}
\phantomsection
\label{rmks:extensions} 
\begin{remenumi}
\item
\label{rmks:extensions-a}  
 A natural extension would be to higher dimensions,
 i.e., in~\eqref{eq:region-def} one can take $(x,y) \in \RP \times\R^d$ for general $d \in \N$,
 and read $|y|$ as the Euclidean norm $\| y \|$. Incoming/outgoing `angles'
 are now in the (compact)  hemisphere
  $\bbH^d:= \{ z = (z_1, \ldots , z_{d+1} ) \in \R^{d+1} : \| z \| =1, z_{d+1} \geq 0 \}$.
The structure of the state space $\RP \times \bbH^d$ for the half-strip is unaffected by the increase in dimension, so our method would still be feasible, but to obtain a half-strip model that falls into the class considered here
requires rather strong assumptions. For example, if 
one assumes that the horizontal component of the outgoing angle depends only on the first component of the incoming
 angle, then the model behaves essentially as in the planar case.  Otherwise,
if strong symmetry conditions are not imposed,
 then the recurrence classification would involve a more complex interaction between the geometry and 
 the invariant measure of the angle process on $\bbH^d$, demanding significantly more analysis. Thus we do not pursue higher dimensional extensions in this paper.
\item
\label{rmks:extensions-b} 
Also of interest are `shrinking' domains, as in~\cite{mvw},
in which one expects recurrence, but the questions of interest would be to study stability, i.e.,  positive recurrence, properties of invariant measures, convergence, and ergodicity, for example. Roughly, one would take $\gamma < 0$ in~\eqref{eq:region-def}, but one would need to modify the domain around $x=0$ to ensure that it is smooth and does not produce any pathologies. In the present paper we give results on passage-time moments for the half-strip model (see Section~\ref{sec:strips-results}), but there are two main obstacles to an analogous analysis of the billiards model. These are: (i) from a neighbourhood of the origin, the billiards process can have heavy-tailed increments (cf.~Lemma~\ref{lem:bound-near-apex}) which means that the technical conditions of, e.g., Theorem~\ref{thm:strict-lamperti-moments}, are not satisfied; and~(ii) it seems more natural to ask about ergodic properties of the real-time process, rather than the boundary-collisions process, which would demand a detailed study of the time-change. Such an analysis in the case of i.i.d.~reflections was only partially completed in~\cite{mvw}.
Thus the case $\gamma <0$ also demands a dedicated and thorough analysis that we do not attempt here. 
\end{remenumi}
\end{remarks}
 
The rest of the paper is organized as follows.  In
Sections~\ref{sec:billiards-motivation}
and~\ref{sec:strips-motivation} we discuss motivation and prior
literature for stochastic billiards and half-strip models, to explain
the origin and context of the present paper.  In
Section~\ref{sec:strips} we formulate precisely the half-strip
processes that we study and state our main results on the recurrence
classification.  In Section~\ref{sec:billiards} we do the same for the
stochastic billiards model.  The main structural elements of the
proofs are given in Sections~\ref{sec:strips-proofs}
and~\ref{sec:billiards-proofs}, respectively.  The Appendix collects
some auxiliary results: Section~\ref{sec:fredholm} on results from
functional analysis around the Fredholm alternative theorem for
compact operators, and Section~\ref{sec:semimartingale} on Lyapunov-function criteria for recurrence and transience of processes on half-strips
$\RP \times S$ for compact $S$.

\subsection{Motivation~1: Stochastic billiards}
\label{sec:billiards-motivation}

In the early 1900s, Knudsen undertook a series of experiments studying the flow of
rarefied gases through tubes~\cite{Kn}.
If the mean free path length of the gas is much bigger than the
diameter of the tube, then collisions between gas particles are much rarer than collisions of
particles with the tube boundary, and the bulk behaviour is described via single-particle dynamics.
This \emph{Knudsen regime} of ideal gas dynamics leads to the study of billiards processes,
in which a particle moves with constant velocity until it hits the boundary. Similar processes are also naturally
motivated from optics.

Deterministic reflection leads to classical billiards
models~\cite{tabachnikov}.  The presence of microscopic irregularities
in the domain boundary  (its `microgeometry') motivates considering \emph{random reflections}
and hence \emph{stochastic billiards}: what appears to be a single
reflection at the boundary is comprised of a rapid sequence of
reflections whose cumulative effect is essentially
random~\cite{feres,fy}.  On the basis of his ideal gas experiments,
Knudsen argued for i.i.d.~reflections according to a cosine law; in optics, the
same reflection law  is known as the \emph{Lambertian law}.

Stochastic billiards with i.i.d.~reflections have received much
attention, including~\cite{bg,lr,cpsv,evans} for bounded domains
and~\cite{mvw,MenPopWad16,BurTad} for unbounded domains.  For bounded
domains, stochastic billiards with i.i.d.~reflections are related to
`shake-and-bake' algorithms for sampling uniformly from the
boundary~\cite{dv}. Mathematical results have supported the belief that the Lambertian law is
the most natural law in the case where reflections are independent of
the angle of incidence~\cite{lr,abs}, and 
stochastic billiards with the Lambertian reflection law have received
particular attention. For example, 
in~\cite{BurTad}, the distribution of the exit angle for a
  Lambertian process in a half-infinite tube with an aperture is studied, and in~\cite{bg} the authors prove a scaling limit result for a  
  Lambertian process in a thin annulus.

  As described above, a central motivation for stochastic reflections is the disordered microgeometry of reflectors. However, examining this assumption leads to the conclusion that trajectories at different incoming angles are likely to interact with the same microgeometry in different ways, as described, for example, in~\cite{feres,fy}. Thus there are physical arguments to propose a \emph{Markovian} reflection law, where
the incoming angle is important for determining the reflections; these arguments 
can be made in both the ideal gas and optical settings.

In the probability literature, the study of billiards with Markovian
reflection laws is in the early stages: we are aware only of recent work for
one-dimensional intervals in which the speed (and not just the
direction) may change on each reflection~\cite{bbg};
see Remarks~\ref{rems:billiards-real}\ref{rems:billiards-real-d}
for how our results can be extended to incorporate varying speeds. 
One motivation
for the present paper is to study the probabilistic behaviour of
 Markovian billiards in unbounded, multidimensional
domains.  In this respect, the present paper can be seen as an
extension of the model of~\cite{mvw} from~i.i.d.~to Markovian
reflections.  We focus on the two-dimensional case, the minimal
setting that displays the phenomena we are interested in; see Remarks~\ref{rmks:extensions}\ref{rmks:extensions-a} for some comments on extensions to higher dimensions.

\subsection{Motivation~2: Random walks on half-strips}
\label{sec:strips-motivation}

Let $\xi_n$ be a time-homogeneous, discrete-time Markov chain on state space $\bbX \times S$, and
write $\xi_n = (X_n, \eta_n)$ in coordinates, with $X_n \in \bbX$ and
$\eta_n \in S$.  If the law of $(X_{n+1} -X_n, \eta_{n+1})$ depends
only on~$\eta_n$ (call this assumption \emph{homogeneity}), then
$\xi_n$ is a \emph{Markov random walk}, $\eta_n$ is itself Markov, and
$X_n$ can be represented as an additive functional of the Markov chain
$(X_{n} - X_{n-1}, \eta_n)$.  Under the most common assumptions, $\eta_n$ is
ergodic with a unique stationary distribution~$\pii$.  See
e.g.~\cite{alsmeyer} for a general view of such processes, which arise
in many applications, such as:
\begin{itemize}
\item Queueing, where e.g.~$\bbX = \ZP^d$ is a space of queue-lengths
  and $S$ is a set of service regimes~\cite{neuts}.
\item Random walks with momentum, short memory, or internal degrees of
  freedom, where e.g.~$\bbX = \R^d$ and $S$ is a set of internal
  states for the particle~\cite{ks}.
\item Regime-switching processes in mathematical finance, where
  e.g.~$\bbX = \RP^d$ is a space of prices or interest rates and $S$
  is a set of states of the market~\cite{hamilton}.
\end{itemize}
In practice, these may be~\emph{hidden Markov} models in the sense
that one may not be able to observe~$\eta_n$, only~$X_n$.
 
For concreteness, take $\bbX = \RP$. Then $\RP \times S$ is a
\emph{half-strip} and study of the case of \emph{finite}~$S$ is
classical~\cite{malyshev,FayMalMen95}.  To go deeper, it is natural to
relax the homogeneity assumption, and hence go beyond the Markov
random walk case.  To probe the recurrence/transience phase transition
for the half-strip model, for example, analogy with classical work of
Lamperti~\cite{lamp} suggests that the law of $X_{n+1} -X_n$ should
also depend on $X_n$, and not just $\eta_n$.  Once one admits this
generalization, it is often too restrictive to maintain the Markov
assumption on $\eta_n$: in the presence of non-trivial dependence
between $X_n$ and $\eta_n$, a perturbation of the homogeneous
situation to provide the necessary inhomogeneity for $X_n$ will also
tend to introduce $X_n$-dependence for~$\eta_n$. We refer
to~\cite{GeoWad14,LoWade17} for some examples.  However, progress can
be made if we replace the homogeneity assumption by an
\emph{asymptotic} Markov assumption on~$\eta_n$ and some asymptotic
regularity on the drifts of~$X_n$, both assumptions in the case of large~$X_n$. This
framework is the subject of~\cite{GeoWad14,LoWade17} for the case
where~$S$ is \emph{finite}. The present paper extends this to the case
where~$S$ is a compact metric space.

We emphasize that the application of the half-strip framework to the
Markovian billiards model demands that $\eta_n$ (which will be an
angle in the billiards context) is only asymptotically Markov, so we
are outside the Markov random walk setting. Moreover, the
  reflection rules on a continuous curved surface with inward normal
  vectors in $[-\frac{\pi}{2},\frac{\pi}{2}]$ leads us to consider
   uncountable compact sets $S$. Thus, we need
  to go beyond the finite-$S$ setting of~\cite{GeoWad14,LoWade17}.
In this respect, the present paper can also be seen as an extension of
previous work on half-strips, and is of parallel interest due to
the broad range of applicability of such models: our application
to the stochastic billiards model is one example.

\section{Markov chains on a half-strip}
\label{sec:strips}

\subsection{Asymptotic Markovianity}
\label{sec:absmodel}

We study our stochastic billiards model by a reduction to a Markov
chain on a half-strip~$\RP \times S$. Half-strip models have
their own independent motivation, as described in
Section~\ref{sec:strips-motivation}.  In this section we present our
results on near-critical half-strip models satisfying appropriate
assumptions. The set $S$ will be a compact metric space; in our billiards application, $S$ will be a real interval.  In the somewhat simpler special case where $S$ is
finite, our assumptions align closely with those
of~\cite{GeoWad14,LoWade17}.

For a metric space $(H,d_H)$  with Borel sets~$\cB (H)$, denote by $\cP(H)$ the set of probability measures on~$(H,\cB(H))$.
Recall that a function $\cK : H \times \cB(H) \to [0,1]$ is a \emph{Markov kernel} on~$H$ if
(i) $\cK ( x, \, \cdot \,) \in \cP(H)$ for all $x \in H$, (ii) $x \mapsto \cK (x, A)$
is Borel measurable for each $A \in \cB(H)$, and (iii) $\cK (x, H) =1$ for all $x \in H$.  

To describe our model, fix $(S,d_S)$ a compact metric space with Borel sets $\cB(S)$. 
We denote by $\Sigma := \RP \times S$, our \emph{half-strip}, whose Borel sets $\cB(\Sigma)$ form the product $\sigma$-algebra. 
Suppose that we have a probability space $(\Omega, \cF, \Pr)$ on which there is a filtration $(\cF_n, n \in \ZP)$ 
and an adapted process
$\xi = (\xi_n, n \in \ZP )$ taking values in~$\Sigma$,
with initial state $\xi_0 = (x_0, u_0) \in \Sigma$ deterministic (but arbitrary).
We assume that $\xi$ is a time-homogeneous Markov process  
with Markov kernel~$\Ks$ (`s' for `strip') on $\Sigma$,
so that for all~$A \in \cB(\Sigma)$ and all $n \in \ZP$,
\begin{equation}
\label{eq:strips-kernel}
  \Pr ( \xi_{n+1} \in A \mid \cF_{n} ) =    \Pr ( \xi_{n+1} \in A \mid  \xi_{n} ) = \Ks ( \xi_n ,  A) , \as
\end{equation}

In coordinates, we write $\xi_n = (X_n,\eta_n)$ for $X_n \in \RP$ and $\eta_n \in S$. We will assume
the following basic conditions.

\begin{description}
\item[\namedlabel{ass:non-confinement}{N}] Suppose that $\xi$ is \emph{non-confined}: $\Pr ( \limsup_{n \to \infty} X_n = \infty ) = 1$.
\item[\namedlabel{ass:moments}{B$_{p,q}$}] Suppose that for constants $x_B \in \RP$, $p, q> 0$, and $B_p, B_q <\infty$, 
  \begin{align}
	\label{eq:lamperti-p-moments}
  \Exp \bigl[ \abs{X_{n+1} - X_n}^p \!\bigmid \cF_n \bigr] & \leq B_p, \text{ on } \{ X_n \geq x_B \}; \\
	\label{eq:q-bound}
 \Exp \bigl[ | X_{n+1} |^q \!\bigmid \cF_n \bigr] & \leq B_q, \text{ on } \{ X_n < x_B \} .
  \end{align}
\end{description}

The non-confinement condition~\eqref{ass:non-confinement}
follows from suitable irreducibility or non-degeneracy assumptions (see e.g.~\cite[\S 3.3]{MenPopWad16}).
Condition~\eqref{ass:moments} includes boundedness of $p$th moments in
the $\RP$~coordinate for $X_n\geq x_B$.  While the simplest case is
when $x_B=0$ and~\eqref{eq:lamperti-p-moments} holds everywhere, it is important for our
application to stochastic billiards to permit the case
where~\eqref{eq:lamperti-p-moments} holds on $\{ X_n \geq x_B\}$, and elsewhere 
demand only~\eqref{eq:q-bound} for some $q \in (0,p)$.

We next formulate a condition that says~$\eta_n$ is
\emph{asymptotically Markovian} for large~$X_n$.  This will entail a
limiting kernel on~$S$. Recall that a probability
measure~$\nu \in \cP(S)$ is \emph{invariant} for a Markov kernel~$\cK$
on $(S,d_S)$~if
\begin{equation}
\label{eq:pi-invariant}
\nu (B) = \int_S \nu ( \ud x) \cK ( x, B) , ~ \text{for all}~ B \in \cB(S) .
\end{equation}
Write $\norm{ \,\cdot \,}_\tv$ for the total variation norm, so that
$d_\tv ( \mu, \nu) := \frac{1}{2} \| \mu - \nu \|_\tv$ defines the
total variation metric on~$\cP(S)$.

\begin{description}
 \item[\namedlabel{ass:kernel}{K}]
Suppose that the  Markov kernel $\cK : S \times \cB(S) \to [0,1]$ satisfies the following.
  \begin{enumerate}[label=(\roman*)]
\item
\label{ass:kernel-i} 
~There is a unique solution $\nu = \pii$ to~\eqref{eq:pi-invariant} over~$\nu \in \cP(S)$.
\item
\label{ass:kernel-ii} 
The function~$u \mapsto \cK (u, \, \cdot \,)$
is continuous from $(S,d_S)$ to $(\cP(S),d_\tv)$.
\end{enumerate}
\end{description}
Assumption~\eqref{ass:kernel}\ref{ass:kernel-ii} is a strong version of the Feller property and  guarantees certain
analytic properties of the operator associated with~$\cK$: see Section~\ref{sec:fredholm} below.
To state the asymptotic Markovianity condition,
define for $(x,u) \in \Sigma$ and $B \in \cB (S)$,
\begin{equation}
\label{eq:pseudo-kernel}
\Kss ( x, u , B ) := \Ks ( x, u , \RP\! \times \! B ) ,
\end{equation}
where~$\Ks$ is the kernel from~\eqref{eq:strips-kernel}, and $\Pr ( \eta_{n+1} \in B \mid \cF_n ) = \Kss ( X_n, \eta_n, B)$, a.s.
There are two versions of the asymptotic Markovianity condition, 
the basic~\eqref{ass:asymptotically-markov} and the stronger~\eqref{ass:asymptotically-markov+}; 
which one we will need will depend on the other conditions that we impose.
Let $\Meass (S)$ denote the set of finite signed  measures on~$S$.
In~\eqref{eq:asymptotically-markov} and~\eqref{eq:asymptotically-markov+}, $\cK$
is the kernel from~\eqref{ass:kernel}.

\begin{description}
 \item[\namedlabel{ass:asymptotically-markov}{M}]
Suppose that
  \begin{equation}\label{eq:asymptotically-markov}
  \lim_{x\to \infty}\sup_{u \in S} \norm{\Kss ( x, u, \, \cdot \, )  - \cK ( u, \, \cdot \, )}_\tv = 0.
  \end{equation}
 \item[\namedlabel{ass:asymptotically-markov+}{M$_{+}$}] 
Suppose that there is a continuous $\meas : (S, d_S) \to ( \Meass(S), d_\tv)$ 
such that 
   \begin{equation}\label{eq:asymptotically-markov+}
 \sup_{u \in S}  \norm{\Kss ( x, u, \, \cdot \, ) -  \cK ( u, \, \cdot \, ) -  x^{-1}   \meas_u }_{\tv} = o(x^{-1}), \text{ as } x \to \infty. 
   \end{equation}
\end{description}
Any $\meas$ in~\eqref{ass:asymptotically-markov+} must have $\meas_u (S) = 0$.
 In~\eqref{eq:asymptotically-markov+}
  and subsequently, we use the standard Landau $O, o$ notation: for $f : (0,\infty) \to (0,\infty)$, 
	we write $g(x) = O (f(x))$ to mean that there exist $C, x' \in \RP$ such that
	$| g(x) | \leq C f(x)$ for all $x \geq x'$, and we write
	$g(x) = o (f(x))$ to mean that for every $\eps>0$, there exists $x' \in \RP$ such that
	$| g(x) | \leq \eps f(x)$ for all $x \geq x'$.

\subsection{Lamperti regimes and recurrence classification}
\label{sec:strips-results}

Classical work of Lamperti~\cite{lamp} gives sufficient conditions for recurrence and transience of Markov processes
on $\RP$ in terms of (the first two) increment moment functions: see~\cite[Ch.~3]{MenPopWad16} for a survey of such results.
We develop here the analogous theory for the half-strip model satisfying the assumptions of Section~\ref{sec:absmodel}.

For $(x,u) \in \Sigma$ and $R \in \cB(\RP)$, define $\Ksr (x,u,R) := \Ks ( x, u , R \times S )$.
If~\eqref{ass:moments}
holds for $p \geq k \in \N$ and $x_B \in \RP$,
then for $x \geq x_B$, $u \in S$, define
\begin{equation}
\label{eq:lamperti-mu-def}
\mu_k (x, u) := \int_{\RP} (y-x)^k \Ksr (x, u, \ud y ) ,
\end{equation}
so $\Exp [ (X_{n+1} - X_n)^k \mid \cF_n ] = \mu_k ( \xi_n )$, on $\{ X_n \geq x_B \}$.
For $r \in \RP$, define the \emph{passage time}
\begin{equation}
    \label{eq:def-passage-time}
\tau_r := \min \{ n \in \ZP : X_n \leq r\}, \end{equation}
with the usual convention $\min \emptyset := \infty$. 
In this section we seek to classify the asymptotic behaviour of~$\xi$ using the asymptotic properties of $\mu_1$ and $\mu_2$.

We say $\xi$ is \emph{transient} if
$\lim_{n \to \infty} X_n = \infty$, a.s., \emph{recurrent} if there
exists $r_0 \in \RP$ such that $\liminf_{n \to \infty} X_n \leq r_0$,
a.s., and \emph{positive recurrent} if there exists $r_1 \in \RP$ such
that $\Exp \tau_r < \infty$ for all $r \geq r_1$. If for every $r \in \RP$ there exists
 $r_1 > r$ such
that $\Exp \tau_r = \infty$ whenever $x_0 > r_1$ (recall $X_0=x_0$ is deterministic, but arbitrary), we say
  the process is \emph{null recurrent}.  Under suitable
irreducibility assumptions these are essentially equivalent to other standard
definitions (see e.g.~Chapter~10 of~\cite{dmps}). Let $\Cb(S)$ denote
the continuous (hence bounded) real-valued functions on $S$, and
$\Cb^+(S)$ those that are non-negative.

\begin{proposition}
\label{prop:strips-crude}
Suppose that~\eqref{ass:non-confinement}, \eqref{ass:kernel}, and~\eqref{ass:asymptotically-markov} hold,
and that~\eqref{ass:moments} holds with~$p>1$ and $q>0$. Suppose also that there exists $d \in \Cb(S)$ such that $\mu_1$ defined by~\eqref{eq:lamperti-mu-def} satisfies $\lim_{x \to \infty} \sup_{u \in S} | \mu_1 (x,u) - d_u | = 0$. 
 Set $\delta := \int_S d_u \pii (\ud u)$. Then  $\xi$ is transient if $\delta >0$, and  recurrent if $\delta <0$. If, in addition,
$q \geq 1$ in~\eqref{ass:moments}, then   $\xi$ is positive recurrent if $\delta <0$.
\end{proposition}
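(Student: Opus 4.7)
The plan is to build a Lyapunov function of the form $f(x,u) = x + g(u)$, where $g \in \Cb(S)$ solves the Poisson equation
\[
g(u) - (\cK g)(u) = d_u - \delta , \quad u \in S .
\]
The right-hand side integrates to zero against~$\pii$, and under~\eqref{ass:kernel} the operator $I - \cK$ on $\Cb(S)$ has kernel spanned by the constants (by the uniqueness in~\eqref{ass:kernel}(i) combined with Fredholm duality); the Fredholm alternative results of Section~\ref{sec:fredholm} then deliver a bounded continuous $g$ solving the equation. My first step would be to verify this solvability by invoking that operator-theoretic machinery, using that $\cK$ acts continuously on $\Cb(S)$ and that $\cK - \Pi$ is compact, where $\Pi \phi := \int_S \phi \, \ud \pii$ is the projection onto the constants.

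Next I would compute the drift of $f$ along $\xi$. For $X_n = x$ large and $\eta_n = u$, combining the uniform convergence $\mu_1(x,u) \to d_u$ with the asymptotic Markovianity~\eqref{eq:asymptotically-markov} and boundedness of~$g$ gives
\[
\Exp \bigl[ f(\xi_{n+1}) - f(\xi_n) \bigmid \xi_n = (x,u) \bigr] = \mu_1(x,u) + (\cK g)(u) - g(u) + o(1) = \delta + o(1) ,
\]
uniformly in $u \in S$ as $x \to \infty$, where the $o(1)$ term comes from the total variation convergence in~\eqref{ass:asymptotically-markov} applied to the bounded function~$g$. The bounded $p$th moments from~\eqref{ass:moments} with $p>1$ also provide uniform integrability of the increments of $f$.

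Then I would feed these estimates into the semimartingale recurrence/transience criteria on $\RP \times S$ collected in Section~\ref{sec:semimartingale}, applied to the adjusted radial coordinate $f(\xi_n) = X_n + g(\eta_n)$, which differs from $X_n$ by a bounded quantity. If $\delta > 0$, the process $f(\xi_n)$ has asymptotic drift bounded below by $\delta/2 > 0$, and the associated Lyapunov criterion, used together with the non-confinement~\eqref{ass:non-confinement}, yields transience. If $\delta < 0$, the analogous supermartingale argument together with the boundary moment bound~\eqref{eq:q-bound} to control excursions into $\{X_n < x_B\}$ yields recurrence. For positive recurrence under the additional hypothesis $q \geq 1$, Foster's criterion applied to the same~$f$ completes the argument: outside $\{X_n < x_0\}$ for some sufficiently large $x_0$ the drift of $f$ is at most $\delta/2 < 0$; the bound~\eqref{eq:q-bound} with $q \geq 1$ controls $\Exp[f(\xi_{n+1}) \mid \cF_n]$ on $\{X_n < x_B\}$, and~\eqref{BCp} with $p>1$ controls the intermediate region $\{ x_B \leq X_n < x_0 \}$.

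The main obstacle I anticipate is producing a \emph{bounded} continuous solution~$g$ of the Poisson equation: this is precisely where the strong Feller-type assumption~\eqref{ass:kernel}(ii) earns its keep, ensuring that $I - \cK$ is Fredholm of index zero on $\Cb(S)$ so that the solvability compatibility condition $\int_S (d_u - \delta) \, \pii(\ud u) = 0$ is sufficient. Once such~$g$ is in hand, the remainder is standard Lyapunov analysis, since the effective drift~$\delta$ does not vanish as $x \to \infty$ and we are comfortably outside the delicate Lamperti-critical regime where second moment information would be needed.
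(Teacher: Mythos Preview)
Your approach is sound and close in spirit to the paper's: solve the Poisson equation via the Fredholm machinery of Section~\ref{sec:fredholm}, then run a Lyapunov argument. The paper omits the proof but indicates using $f(x,u)=x^{\nu}+x^{\nu-1}\varphi(u)$ with~$\varphi$ from Proposition~\ref{prop:fredholm}; your $f(x,u)=x+g(u)$ is essentially the $\nu=1$ member of that family. Both rely on the same operator-theoretic input, and your identification of~\eqref{ass:kernel}(ii) as the key to getting a \emph{bounded} continuous~$g$ is exactly right.

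The recurrence and positive-recurrence parts go through as you describe: the drift of $f(\xi_n)=X_n+g(\eta_n)$ is $\delta+o(1)$ by the computation you give, so for $\delta<0$ the supermartingale criterion (Lemma~\ref{lem:recurrence}) applies, and Foster's criterion with the linear~$f$ needs exactly $q\geq 1$ for integrability on $\{X_n<x_B\}$.

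There is, however, a gap in the transience direction. When $\delta>0$ your linear~$f$ has \emph{positive} drift, so it is a submartingale outside a bounded set, not a supermartingale. The transience criterion in the paper (Lemma~\ref{lem:transience}) requires a bounded function decreasing to~$0$ at infinity which is a supermartingale; your $f$ is neither. The paper's family handles this by taking $\nu<0$ small: then $x^{\nu}+x^{\nu-1}\varphi(u)$ has drift $\nu x^{\nu-1}(\delta+o(1))<0$ and tends to~$0$, so Lemma~\ref{lem:transience} applies directly (a truncation argument as in Lemma~\ref{lem:truncation} suffices to justify the Taylor expansion with only $p>1$). To repair your route you would need either to compose~$f$ with a suitable power or concave transform and redo the increment estimate, or to invoke a separate ``constant positive drift plus $L^p$ increments, $p>1$, implies transience'' result, which is standard but not among the lemmas provided in Section~\ref{sec:semimartingale}.
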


In the special case where $S$ is finite,
Proposition~\ref{prop:strips-crude} was established on $\ZP \times S$
as Theorem~2.4 in~\cite{GeoWad14}; see also Theorem~2.1
in~\cite{LoWade17}.  We omit the proof of
Proposition~\ref{prop:strips-crude}, as it is similar to, but simpler
than, those of the subsequent results in this section. A proof may
proceed using appropriate Lyapunov
functions~$f(x,u) = x^\nu+ \nu x^{\nu-1} \varphi(u)$ similarly to
Section~4.2.1 of~\cite{Lo}, but, for the existence of an
appropriate~$\varphi$, replacing the finite-dimensional Fredholm
alternative with the operator version described in
Section~\ref{sec:fredholm}.

The case where $\delta = 0$ in Proposition~\ref{prop:strips-crude} cannot be classified without further assumptions.
We move into the Lamperti  setting, where
the critical case has $\mu_1$ of order $1/x$ (in this context,
after the drifts have been `averaged' against $\pii$)
and $\mu_2$ also comes into play
as long as we have $p>2$ in~\eqref{ass:moments}.
The following are the assumptions we will need.

\begin{description} 
\item[\namedlabel{ass:lamperti}{L}]
Suppose that there exist $d, e, \sigma^2 \in \Cb (S)$ such that, as $x \to \infty$, 
\begin{equation}\label{eq:generalized-lamperti-regime}
  \sup_{u \in S}\abs{ \mu_1 (x, u)  - \left( d_u +\frac{e_u}{x} \right) }  = o(x^{-1}),  ~\text{and}~ 
  \sup_{u \in S}\abs{ \mu_2 (x, u) - \sigma^2_u } = o(1) .
\end{equation}
Moreover, if $\pii$ is as defined in~\eqref{ass:kernel}, suppose that 
\begin{equation}
  \label{eq:lamperti-condition}
  \int_S d_u\, \pii ( \ud u ) = 0.
\end{equation}
\end{description}
	
We describe assumption~\eqref{ass:lamperti} as $\xi$ being in the \emph{Lamperti regime}.
As {mentioned} above, if~\eqref{eq:lamperti-condition} does not hold,
the behaviour is simpler (cf.~Proposition~\ref{prop:strips-crude}),
while if the $1/x$ term in~\eqref{eq:generalized-lamperti-regime}
is replaced by $1/x^\beta$, $\beta \in (0,1)$,
the behaviour is again less critical, in that the phase transition is driven by the sign of the effective drift alone (cf.~the case of $\RP$
as described in Chapter~3 of \cite{MenPopWad16}).
Hence the Lamperti regime is the natural one in which to probe the recurrence phase transition;
it is also the regime that emerges from our stochastic billiards application. 
Sufficient for~\eqref{eq:lamperti-condition} is that $d_u = 0$ for \emph{all}~$u$;
this case has a special place in the theory
and we refer to it as the \emph{strict Lamperti regime}:

\begin{description} 
\item[\namedlabel{ass:strict-lamperti}{L$_0$}]
Suppose that~\eqref{ass:lamperti} holds with $d_u = 0$ for all $u \in S$.
\end{description}

In the strict Lamperti regime, the recurrence classification depends
on the values of 
\begin{equation}\label{eq:deltadef}
    \delta_\theta : = \int_S  (2 e_u + (2\theta - 1) \sigma^2_u ) \,\pii ( \ud u ),
    \end{equation}
where $\theta \in \R$.
Note that if $\theta < \theta'$, then $\delta_{\theta}, \delta_{\theta'} \in \R$ satisfy $\delta_{\theta} \leq \delta_{\theta'}$, with equality if and only if $\sigma^2$ is identically $0$.
The next theorem presents the classification. In the case where $S$ is finite,
Theorem~\ref{thm:strips-strict-lamperti} is essentially Theorem~2.5
of~\cite{GeoWad14} (see also Theorem~2.2 of~\cite{LoWade17}).

\begin{theorem}
\label{thm:strips-strict-lamperti}
Suppose that~\eqref{ass:non-confinement}, \eqref{ass:kernel},
and~\eqref{ass:asymptotically-markov} hold, and
that~\eqref{ass:moments} holds with~$p>2$ and $q>0$. Suppose also
that~\eqref{ass:strict-lamperti} holds.  Then the following
classification applies.
  \begin{enumerate}[label=(\alph*)]
  \item
       \label{thm:strips-strict-lamperti-a}
  The process $\xi$ is transient if $\delta_0 >0$ and recurrent
    if $\delta_0 <0$.
   \item
   \label{thm:strips-strict-lamperti-b}
   If, moreover, $q \geq 2$, then $\xi$
    is positive recurrent if $\delta_1 <0$, while $\xi$ is null recurrent if  $\delta_0 <0 < \delta_1$.
  \end{enumerate}
\end{theorem}

The next theorem presents a refinement of the classification into positive/null recurrence,
via quantitative
  information on the moments of the passage times $\tau_r$ as defined at~\eqref{eq:def-passage-time}. 
  In the case of finite~$S$, analogous results are Theorems~2.3 and~2.4 of~\cite{LoWade17}.

\begin{theorem} 
\label{thm:strict-lamperti-moments}
Suppose
    that~\eqref{ass:non-confinement}, \eqref{ass:kernel},
    and~\eqref{ass:asymptotically-markov} hold, and
    that~\eqref{ass:moments} holds with~$p >2$ and $q \geq 2$. Suppose also
    that~\eqref{ass:strict-lamperti} holds. 
    Define $\delta_\theta$ as at~\eqref{eq:deltadef}. 
  \begin{enumerate}[label=(\alph*)]
    \item 
    \label{thm:strict-lamperti-moments-a}
    If $\delta_\theta<0$ for some $\theta>0$, then for any $s \in [0, \theta \wedge p/2 \wedge q/2 )$
    there exists $r_1 \in \RP$ for which $\Exp[ \tau_r ^s ] <\infty$ for all $r \geq r_1$.
\item
\label{thm:strict-lamperti-moments-b}
 If $\delta_{\theta}>0$ for some $\theta\in (0,p/2 \wedge q/2]$, then for
 every $s > \theta$ and every $r \in \RP$, there exists $r_1 \in (r,\infty)$ for which $\Exp[ \tau_r ^s ] = \infty$ provided $X_0 = x_0$ satisfies $x_0 > r_1$.
 \end{enumerate}
\end{theorem}

\begin{remark}
\label{rem:null-critical-equality}
  Theorem~\ref{thm:strips-strict-lamperti}\ref{thm:strips-strict-lamperti-b} is the special case $\theta = 1$
of Theorem~\ref{thm:strict-lamperti-moments};
the case $q \equiv p$ will suffice for many applications (equivalently, $x_B=0$
in~\eqref{ass:moments}).
With regards to the boundary cases in Theorems~\ref{thm:strips-strict-lamperti} and~\ref{thm:strict-lamperti-moments}, we anticipate, in line with~\cite{GeoWad14,LoWade17}, that under slightly
  stronger convergence rate assumptions in~\eqref{eq:asymptotically-markov}
  and~\eqref{eq:generalized-lamperti-regime}, the cases
  $\delta_0 = 0$ and $\delta_1 =0$ are null recurrent,
  while if $\delta_\theta = 0$ for $\theta >0$, then $\Exp [ \tau_r^{\theta} ] = \infty$.
We believe that the approach of the present paper could be extended to prove this,
but one would need a finer Lyapunov function (e.g., with logarithmic corrections, as in~\cite[\S 3.4]{MenPopWad16}) and additional technical work.
  \end{remark}

We need one further assumption to give a classification
under~\eqref{ass:lamperti}.  By disintegration~\cite[Thm.~6.4,
p.~108]{kall}, one has the representation
\begin{equation}
\label{eq:mu-disintegration}
\mu_1 (x, u) = \int_S   \Kss ( x, u , \ud v ) \cmu_1 (x, u, v) ,
\end{equation}
where $\cmu_1 : \Sigma \times S \to \R$ is measurable, essentially
unique, and can be expressed via regular conditional distributions:
see Section~\ref{sec:general-lamperti-proofs}.  Let $\Mb(S)$ denote
the bounded measurable real-valued functions on~$S$ with the uniform
metric $d_\infty(f,g) := \sup_{u \in S} | f(u) - g(u)|$.

\begin{description} 
\item[\namedlabel{ass:detail-drifts}{D}] Suppose that there exist
  $\lambda_u \in \Mb(S)$ for every $u \in S$ such that
  $u \mapsto \lambda_u$ is continuous from $(S, d_S)$ to
  $( \Mb(S) , d_\infty )$, and
\[
  \lim_{x \to \infty} \sup_{u, v \in S} \left| \cmu_1 (x, u , v) - \lambda_{u} (v) \right| = 0 .
\]
\end{description}

Let $\cK^n$ denote
the $n$-fold convolution of $\cK$, i.e.,
$\cK^n ( u, B) := \int_S \cK (u , \ud v ) \cK^{n-1} ( v , B)$ for
$n \in \N$, with $\cK^0 ( u, B ) := \1 { u \in B }$. The next theorem is our classification in the Lamperti regime. The result is of a similar form to  Theorem~\ref{thm:strips-strict-lamperti}, but the role
of $\delta_\theta$ defined by~\eqref{eq:deltadef} there 
is taken by $\tdelta_\theta$ defined in~\eqref{eq:deltadef-general}; now $\tdelta_\theta$
is less explicit due to the presence of the function~$\psi$ (see Remarks~\ref{rmks:psi}).

\begin{theorem}
\label{thm:strips-general-lamperti}
Suppose that~\eqref{ass:non-confinement}, \eqref{ass:kernel},
and~\eqref{ass:asymptotically-markov+} hold, and
that~\eqref{ass:moments} holds with~$p>2$ and $q>0$.  Suppose also
that~\eqref{ass:lamperti} and~\eqref{ass:detail-drifts} hold.  Then
there exists $\psi \in \Cb(S)$ (unique up to translation) with the
property $\int_S ( \psi (u) - \psi (v) ) \cK ( u, \ud v ) = d_u$ for
all $u \in S$. For $\theta \in \R$, define
\begin{align}
\label{eq:deltadef-general}
\tdelta_\theta  & := 2 \int_S \left[  e_u +  \int_S \psi (v) \meas_u
      (\ud v)\right]\pii ( \ud u ) 
      \nonumber\\
      &\qquad
      +(2\theta - 1) \int_S\bigg[\sigma^2_u + 2 \int_S \lambda_{u} (v) \psi (v) \cK ( u, \ud v)\bigg] \pii ( \ud u ). \end{align}
Then $\tdelta_\theta$ is invariant under translation of $\psi$, and $\tdelta_{\theta} \leq \tdelta_{\theta'}$
whenever $\theta \leq \theta'$. The following
classification applies.
  \begin{enumerate}[label=(\alph*)]
  \item
       \label{thm:strips-general-lamperti-a}
  The process $\xi$ is transient if $\tdelta_0 >0$ and recurrent
    if $\tdelta_0 <0$.
   \item
   \label{thm:strips-general-lamperti-b}
   If, moreover, $q \geq 2$, then $\xi$
    is positive recurrent if $\tdelta_1 <0$, while $\xi$ is null recurrent if  $\tdelta_0 <0 < \tdelta_1$.
  \end{enumerate}
	Moreover, if it also holds that
\begin{equation}
\label{eq:strips-convergence}
\lim_{n \to \infty} \sup_{u \in S} \left\| \cK^{n} (u, \, \cdot \, ) - \pii  (\, \cdot \, )  \right\|_\tv = 0 ,
\end{equation}	
then one may take $\psi \in \Cb(S)$ given by the convergent series
\begin{equation}
  \label{eq:psi-series}
  \psi ( u ) = \sum_{n=0}^\infty \int_S \cK^n ( u , \ud v ) d_v   .
\end{equation}
\end{theorem}
\begin{remarks}
\phantomsection
\label{rmks:psi} 
\begin{remenumi}
\item
\label{rmks:psi-a} 
An alternative expression for~\eqref{eq:psi-series} is obtained
in terms of the linear operator $T_\cK$
  associated with kernel~$\cK$, which acts on  bounded
  continuous $f : S \to \R$ via
  $T_\cK f(u) := \int_S \cK (u, \ud v ) f(v)$, and is discussed
  in detail in Appendix~\ref{sec:fredholm}. If we set
   $T_\cK^{n+1} := T_\cK \circ T_\cK^n$, $n \in \ZP$ (with
  $T_\cK^0$ the identity operator), then~\eqref{eq:psi-series} becomes
$\psi = \sum_{n = 0}^\infty T_\cK^n d$.
\item
\label{rmks:psi-b} 
Only for~\eqref{eq:psi-series} do we explicitly
  assume convergence of $\cK^n$ to the unique invariant
  probability~$\pii$; under~\eqref{ass:kernel},
  condition~\eqref{eq:strips-convergence} holds for any irreducible,
  aperiodic, Harris recurrent $\cK$: see e.g.~\cite[pp.~251,
  262]{dmps}.  For finite $S$, a version of
  Theorem~\ref{thm:strips-general-lamperti} was given in Theorem~2.6
  of~\cite{LoWade17}, without the identification of~$\psi$
  at~\eqref{eq:psi-series}.  Even with~\eqref{eq:psi-series}, the
  classification in Theorem~\ref{thm:strips-general-lamperti} is less
  explicit than that in Theorem~\ref{thm:strips-strict-lamperti} due
  to involvement of function~$\psi$, whose probabilistic significance
  is explained in the next remark.  In some cases, it is possible to
  compute~$\psi$ explicitly: see e.g.~\cite[\S 5.1]{Lo} and
  Example~\ref{ex:example-family} below.
\item
\label{rmks:psi-c} 
Although~\eqref{ass:lamperti} is weaker
  than~\eqref{ass:strict-lamperti},
  Theorem~\ref{thm:strips-general-lamperti} does not imply
  Theorem~\ref{thm:strips-strict-lamperti} because of the presence of
  the stronger conditions~\eqref{ass:asymptotically-markov+}
  and~\eqref{ass:detail-drifts}.  On the contrary, we deduce
  Theorem~\ref{thm:strips-general-lamperti} from
  Theorem~\ref{thm:strips-strict-lamperti} by showing that, under the
  hypotheses of Theorem~\ref{thm:strips-general-lamperti}, the process
  $(X_n + \psi (\eta_n), \eta_n )$ satisfies the assumptions of
  Theorem~\ref{thm:strips-strict-lamperti} with appropriately
  transformed parameters: see Theorem~\ref{thm:general-to-strict}
  below.
\end{remenumi}
\end{remarks}

Recall that $\tau_r$ is the passage time defined at~\eqref{eq:def-passage-time},
and that $\tdelta_\theta$ is defined by~\eqref{eq:deltadef-general}
in terms of the function~$\psi$ described in Theorem~\ref{thm:strips-general-lamperti}.
The following result on passage-time moments 
provides a quantification of recurrence, and is the analogue of Theorem~\ref{thm:strict-lamperti-moments}.
  In the case of finite~$S$, analogous results are Theorems~2.7 and~2.8 of~\cite{LoWade17}.

\begin{theorem}
    \label{thm:moments-general}
Suppose that~\eqref{ass:non-confinement}, \eqref{ass:kernel},
and~\eqref{ass:asymptotically-markov+} hold, and
that~\eqref{ass:moments} holds with~$p>2$ and $q \geq 2$.  Suppose also
that~\eqref{ass:lamperti} and~\eqref{ass:detail-drifts} hold.
    Define $\tdelta_\theta$ as at~\eqref{eq:deltadef-general}. 
  \begin{enumerate}[label=(\alph*)]
    \item 
     If $\tdelta_\theta<0$ for some $\theta>0$, 
  then for any $s \in [0, \theta \wedge p/2 \wedge q/2 )$
    there exists $r_1 \in \RP$ for which $\Exp[ \tau_r ^s ] <\infty$ for all $r \geq r_1$.
\item
 If $\tdelta_{\theta}>0$ for some $\theta\in (0,p/2 \wedge q/2]$,
 then for
 every $s > \theta$ and every $r \in \RP$, there exists $r_1 \in (r,\infty)$ for which $\Exp[ \tau_r ^s ] = \infty$ provided $X_0 = x_0$ satisfies $x_0 > r_1$.
 \end{enumerate}
  \end{theorem}

\section{Stochastic billiards} 
\label{sec:billiards}

\subsection{Model formulation and construction}
\label{sec:billiards-setup}

Fix a domain $\gR$ as defined at~\eqref{eq:region-def}, with $\gamma \in(0,1)$. 
We consider a stochastic billiards model that can be described informally as follows.
A particle moves at unit speed, in a fixed direction in the interior of $\gR$, until it hits the boundary,
at which point it reflects, randomly, according to a \emph{reflection kernel} $\Kb$ that operates on the \emph{incoming angle}
to give an \emph{outgoing angle}. Angles are measured relative to the inwards pointing unit normal vector
at the collision point.
Instead of working with the continuous-time process, we construct
a
discrete-time Markov process that records the collision locations and the incoming angles at the collisions;
the continuous-time process can be easily constructed from the collisions process, but as we do not need it in this paper, we omit the details.
 
We outline the construction of the discrete-time collisions process $\zeta := (\zeta_n, n \in \ZP)$ 
with $\zeta_n = (Z_n, \chi_n, \alpha_n ) \in \Sigmas := \RP \times \{ -1, +1\} \times S$,
where
$S := [ - \frac{\pi}{2}, \frac{\pi}{2} ]$, endowed with the usual Euclidean metric.
Here $Z_n \in \RP$ represents the horizontal coordinate of the collision location, $\chi_n \in \{ -1, +1\}$
is the sign of the vertical coordinate (with the convention that $\chi_n =1$ if $Z_n =0$), 
and $\alpha_n \in S$ is the incoming angle.
The Markov kernel $\Kb ( \alpha_n, \, \cdot \,)$ is then used to generate
the outgoing angle $\beta_n$. 
Our sign conventions are such that
if one extends the normal vector at a collision point (other than the origin) so as
to divide the domain~$\gR$ into one bounded and one unbounded component, 
 positive $\beta_n$ means that the outgoing trajectory enters the unbounded component, 
while positive $\alpha_n$ means that the incoming trajectory originates in the bounded component.  
There is then a deterministic function,
derived from the geometry of the problem, that gives $\zeta_{n+1}$ as a function of $(Z_n, \chi_n , \beta_n)$.
This gives us a Markov evolution for~$\zeta$. 
We now give the details.

Let $\Kb$ denote a Markov kernel on the compact metric space $(S,d_S)$.
  We also set $S_0 := [ - \theta_0, \theta_0 ]$ for some fixed $\theta_0 \in (0, \pi/2)$,
	and assume an \emph{ellipticity condition}:

\begin{description} 
\item[\namedlabel{ass:ellipticity}{B1}]
Suppose that $ \Kb ( \alpha , S_0 ) = 1$ for all $\alpha \in S$.
\end{description}
 
On a probability space $(\Omega, \cF, \Pr)$, let $U, U_1, U_2, \ldots$
be a sequence of independent $U[0,1]$ random variables, that will serve as our random inputs.
There is a measurable function $\Phi : S \times [0,1] \to S$ such that
$\Pr ( \Phi ( \alpha, U) \in B ) = \Kb ( \alpha, B)$ for $B \in \cB(S)$~(see e.g.~\cite[Lem.~3.22, p.~56]{kall}).
For $(z, j ) \in \RP \times \{-1, +1\}$, we let $h (z, j) := j z^\gamma$, so that if $z >0$, then
 $(z, h(z,\pm 1)) \in \partial \gR$ are the points on the upper and lower boundary at horizontal distance~$z$.
Note that $(0, h(0,j)) = (0,0)$ for either value of~$j$. 
For $(z, j) \in (0,\infty) \times \{-1,+1\}$, denote the inwards pointing
normal vector at $(z, h(z,j)) \in \partial \gR$ by 
\begin{equation}\label{eq:normalatz}
  n (z ,j)  : = \left( 1 + \gamma^2 z^{2\gamma - 2} \right)^{-1/2} \begin{bmatrix} \gamma z^{\gamma - 1} \\ -j \end{bmatrix} ;
\end{equation}
also set $n (0, j) := (1, 0)$ and let $\theta (z)$ represent the
magnitude of the angle between $n(z, j)$ and the vertical (see
Figure~\ref{fig:normal}). Put differently, $\theta(z)$ is given by
\begin{equation}\label{eq:normalangle}
 \theta(0) := \pi/2, \text{ and } \theta(z) :=  \arctan ( \gamma z^{\gamma-1} ) \text{ for } z >0 .
\end{equation}
Note that $\theta (z) \sim \gamma z^{\gamma-1}$ as $z \to \infty$.

\begin{figure}[!ht]
  \begin{center} 
\begin{tikzpicture}[domain=0:7, scale = 1.4]
\draw[black, line width = 0.40mm]   plot[smooth,domain=1:7,samples=500] ({\x},  {(\x)^(1/2)});
\draw[double,<-,>=stealth] (4.5,1.5) arc (-45:-67.5:1);
\draw[->,>=stealth] (4.24,1) arc (-75:-108:1);
\draw (4,2) -- (7,2.75);
\draw[black,dashed] (4,2) -- (4,-1);
\filldraw (4,2) circle (1.0pt);
\node at (3.9,2.4) {{\small $(z,h(z,1))$}};
\draw (1,1.25) -- (4,2);
\draw[->] (4,2) -- (4.6,-0.4);
\draw (3.8,1.95) -- (3.86,1.71);
\draw (4.06,1.76) -- (3.86,1.71);
\draw[->,>=stealth] (4,0.2) arc (270:284:1.8);
\node at (5.2,2) {{\small $\partial \gR$}};
\node at (4.25,0.03) {{\footnotesize $\theta(z)$}};
\node at (4.65,-0.6) {{\small $n (z,1)$}};
\node at (4.4,1.2) {{\footnotesize $\beta$}};
\node at (3.8,0.85) {{\footnotesize $\alpha$}};
\node at (5.5,0.8) {{\small $\ell_t (z,1,\beta)$}};
\draw[black,->,>=stealth, line width=1.3pt, line cap=round, dash pattern=on 0pt off 1.5\pgflinewidth] (4,2) -- (5,1);
\draw[black,-, line width=1.3pt, line cap=round, dash pattern=on 0pt off 1.5\pgflinewidth] (3.2,-0.4) -- (4,2);
\end{tikzpicture}
\caption{Point $(z,h(1,z)) \in \partial \gR$
   has inwards-pointing normal $n(z,1)$, making angle $\theta(z)$
with the vertical. The ray from $(z,h(1,z))$ at angle $\beta$ relative to the normal is parametrized by $\ell_t (z,1,\beta)$, $t >0$.
If the particle hits $\gR$ at point $(z,h(1,z))$ at incoming angle $\alpha$, then it reflects at outgoing angle $\beta$ drawn from $\cK(\alpha , \, \cdot \,)$.
In the picture, both~$\alpha$ and~$\beta$ are positive.
}
\label{fig:normal}
\end{center}
\end{figure}
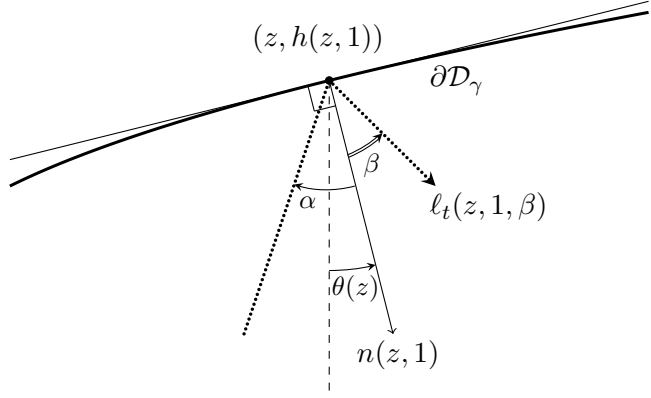

For $(j,\theta) \in \{-1,+1\} \times S$, define $\rot(j,\theta) :  \R^2 \to \R^2$ 
by
\begin{equation}
\label{eq:rotation}
  \rot(j,\theta)   \begin{bmatrix} x\\ y \end{bmatrix}:= 
  \begin{bmatrix}\cos \theta & -j\sin \theta\\
   j \sin \theta & \phantom{-} \cos \theta
  \end{bmatrix}  \begin{bmatrix} x\\ y \end{bmatrix} = 
	\begin{bmatrix}
 x\cos \theta - j y \sin \theta\\
j x \sin \theta +  y \cos \theta \end{bmatrix}.
\end{equation}
In words, $\rot(j,\theta)$ acts as a rotation by $\theta$, anticlockwise for $j=1$ and clockwise for $j=-1$.
Combining the notation at~\eqref{eq:rotation} with~\eqref{eq:normalatz} and~\eqref{eq:normalangle}, we obtain
\[
n (z ,j) = \rot (j,   \theta (z ) )
\begin{bmatrix}
  0 \\ -j
\end{bmatrix}
= \begin{bmatrix}
  \sin \theta (z) \\ -j \cos \theta (z)
\end{bmatrix} .
\]

Now we can describe the construction of the Markov chain.
We take arbitrary initial values for $(Z_0, \chi_0, \alpha_0) \in \Sigmas$ (subject to the convention $\chi_0 = 1$ if $Z_0=0$).
Given $(Z_n, \chi_n, \alpha_n) = (z, j, \alpha)$, if $z>0$
we generate an outgoing angle $\beta_n := \Phi (\alpha_n, U_n)$ according to the  kernel $\Kb$.
If $z=0$, then instead we take $\beta_n := \theta_0 (1 - 2U_n)$, a uniform angle on~$S_0$.

Given $(z,j) \in \RP \times \{-1,+1\}$, and an outgoing angle $\oa \in S$,
 we define the ray from $(z,h(z,j))$ with angle $\oa$ to be the open semi-line $L(z,j,\oa): = \{ \ell_t (z,j,\oa), t > 0\}$, where 
\begin{align}
\label{eq:ell-non-zero}
  \ell_t(z,j,\oa) & : =  \begin{bmatrix} z \\ h(z,j) \end{bmatrix} + t \, \rot (j, \theta(z) +\oa) \begin{bmatrix} 0 \\ -j \end{bmatrix} .
	\end{align}
Let $\lambda := \lambda (z, j, \oa): = \inf\{ t>0 \colon \ell_t(z,j,\oa)\in \partial \gR\}$ be
the travel time  of the particle until the next collision (equivalently, the distance between collision points).
To construct the subsequent boundary value, set 
\begin{equation}\label{eq:Lambda-def}
  \Lambda (z,j,\oa): = 
	\begin{cases}
    \ell_{\lambda}(z,j ,\oa) & \text{if }  \lambda = \lambda (z,j,\oa)< \infty, \\
    0 & \text{otherwise},
        \end{cases}
\end{equation} 
and write coordinates of $\Lambda$ as $\Lambda_1, \Lambda_2$.
Then, with $\sign(x) : = 2\cdot\1{x \geq 0} -1 $, define
\begin{equation}
  \label{eq:billiards-construction-location}
  Z_{n+1} = \Lambda_1 ( Z_n, \chi_n , \beta_n ) , ~\text{ and }~ \chi_{n+1} = \sign ( \Lambda_2 ( Z_n, \chi_n , \beta_n ) ).
\end{equation}
In words, given $(z,j)$ locating the particle on the boundary, and an
outgoing angle $\oa$, the subsequent boundary value is at the
intersection of the ray $L(z,j,\oa)$ and $\partial \gR$, assuming that
there is such an intersection.  One has $\lambda =\infty$ only if
$\theta(z) + \oa = \pi/2$, but this will be a probability zero event
for us, as we will assume $\Kb (\alpha , \, \cdot \, )$ has a density (see~\eqref{ass:billiards-density} below).

Finally, to determine the next incoming angle, if $z > 0$ and  if the outgoing angle at $(z,h(z,j))$ is $\oa$,
then
the incoming angle at $\Lambda ( z,j,\oa)$, as illustrated in
Figure~\ref{F:angle}, is 
\begin{equation}\label{eq:inangle}
   \Theta (  z, j , \oa ) := 
    \begin{cases}
      \oa + \theta(z) + \theta( \Lambda_1 ( z,j,\oa) )&\text{if }  j \Lambda_2 ( z , j , \oa ) < 0,\\
      \sign(\oa) \pi - \oa - \theta(z) + \theta( \Lambda_1 ( z,j,\oa) ) &\text{otherwise.}
      \end{cases}
 \end{equation}
 In the exceptional case that $z = 0$ we set
 $\Theta ( 0 , j , \oa ) := \frac{\pi}{2} - \abs{\oa} + \theta( \Lambda_1 ( 0,j,\oa) )$.  Then define
\begin{equation}
  \label{eq:billiards-construction-angle}
  \alpha_{n+1} := \Theta ( Z_n, \chi_n , \beta_n ) .
\end{equation}
The combination of~\eqref{eq:billiards-construction-location}, \eqref{eq:inangle}, \eqref{eq:billiards-construction-angle}
and the function~$\Phi$ that applies $\Kb$ to $\alpha_n$
to generate $\beta_n$
completes the construction of the time-homogeneous Markov chain $\zeta$;
to see this note that the functions $\Lambda$ and $\Theta$ are measurable, and use e.g.~\cite[Prop.~8.6, p.~145]{kall}. 
The next section describes our assumptions on the kernel $\Kb$ and our recurrence
classification.

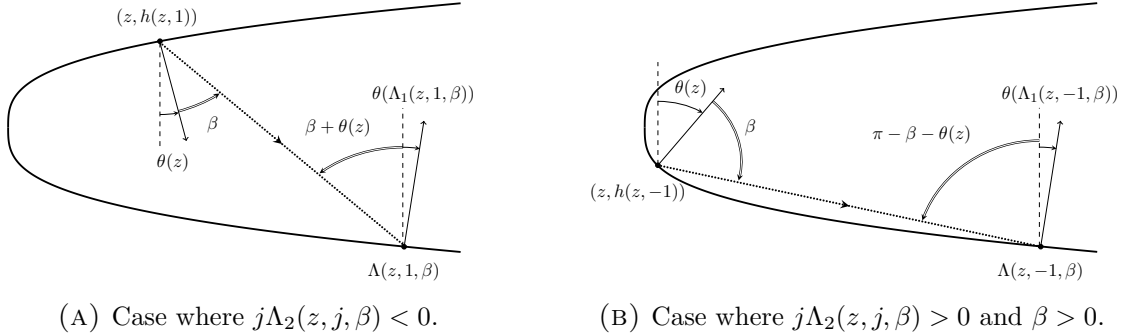
\begin{figure}[!ht]
\begin{subfigure}{.49\textwidth}
  \centering
\scalebox{0.62}{\begin{tikzpicture}[scale = 1.4]
	\draw[black, line width = 0.40mm]   plot[smooth,domain=0:1.9,samples=500] (\x^3,    \x);
	\draw[black, line width = 0.40mm]   plot[smooth,domain=0:1.9,samples=500] (\x^3,  -  \x);
    
\draw[->,>=stealth] (2.3,.2) arc (-90:-79:1.5);
\draw[double,->,>=stealth](2.58,.25) arc (-75:-58:2.3);
\draw[->,>=stealth] (5.98,-.3) arc (90:80:1.5);
\draw[double,->,>=stealth] (5.98,-.3) arc (90: 125:2.2);
		
		  \draw[decoration={
    markings,
    mark=at position 0.5 with {\arrow{stealth}}}, postaction={nomorepostaction,decorate}, 	
	black, >=stealth, line width=1.3pt, line cap=round, dash pattern=on 0pt off 1.5\pgflinewidth] (2.3, 1.3)-- (6, -1.8);
		
\draw[black,dashed]  (2.3,1.3)--(2.3,- .3);
    \draw[black, ->] (2.3,1.3)--(2.7,- .2);

    \draw[black,dashed] (5.98,-1.82)--(5.98, .3);
    \draw[black, ->] (6,-1.82)--(6.3, .1);

\filldraw (2.3,1.32) circle (1.0pt);
\filldraw (6,-1.82) circle (1.0pt);

    \node at (2.26, 1.68) {{\small $(z,h(z,1))$}};
    \node at (2.5, -0.54) {\small $\theta(z)$};
    \node at (3.1, 0.05) {\small $\oa$};

    \node at (6, -2.2) {\small $\Lambda ( z,1,\oa)$}; 
    \node at (6.3, .5) {\small $\theta(\Lambda_1 ( z,1,\oa))$};
    \node at (5, 0) {\small $\oa +\theta(z)$};
    
  \end{tikzpicture}
  }
  \caption{Case where $j \Lambda_2 ( z , j , \oa ) < 0$.}
  \label{F:case1}   
\end{subfigure}
\begin{subfigure}{.49\textwidth}
  \centering
  \scalebox{0.62}{
    \begin{tikzpicture}[scale = 1.4]
      \draw[black, line width = 0.40mm]   plot[smooth,domain=0:1.9,samples=500] (\x^3,    \x);
      \draw[black, line width = 0.40mm]   plot[smooth,domain=0:1.9,samples=500] (\x^3,  -  \x);
	
      \draw[->,>=stealth] (.2,0.4) arc (90:61:1.4);
      \draw[->,>=stealth] (5.98,-.3) arc (90:80:1.5);
      \draw[double,->,>=stealth] (1.03,.4) arc (50:-14:1.2);
      \draw[double,->,>=stealth] (5.98,-.2) arc (90: 159:1.9);
      
      \draw[decoration={
          markings,
          mark=at position 0.5 with {\arrow{stealth}}}, postaction={nomorepostaction,decorate}, 	
	black, >=stealth, line width=1.3pt, line cap=round, dash pattern=on 0pt off 1.5\pgflinewidth] (.2, -.577)-- (6, -1.82);
      
      \draw[black,dashed] (.2,-.57)--(.2, 1);
      \draw[black, ->] (.2,-.577)--(1.2, 0.6);
	
      \draw[black,dashed] (5.98,-1.82)--(5.98, .3);
    \draw[black, ->] (6,-1.82)--(6.3, .1);
    
    \filldraw (0.2,-0.577) circle (1.0pt);
    \filldraw (6,-1.82) circle (1.0pt);

    \node at (6, -2.2) {\small $\Lambda ( z,-1,\oa)$}; 
    \node at (6.3, .5) {\small $\theta(\Lambda_1 ( z,-1,\oa))$};
    \node at (-0.1, -1.0) {{\small $(z,h(z,-1))$}};
    \node at (0.7, 0.6) {\small $\theta(z)$};
    \node at (1.6, 0)  {\small $\oa$};
    \node at (4.2, -0.1) {\small$\pi - \oa - \theta(z)$};	
\end{tikzpicture}}
  \caption{Case where $j \Lambda_2 ( z , j , \oa ) > 0$ and $\oa >0$.}
\label{F:case2}
\end{subfigure}
\caption{Two examples of the computation of the new incoming angle
  $\Theta ( z, j , \oa )$ as given at~\eqref{eq:inangle}.
   In case~({\sc a}), the next collision point is on the opposite side
  of the domain, and
  $\Theta ( z, j , \oa ) = \oa + \theta(z) + \theta( \Lambda_1 (
  z,j,\oa) )$.  In case~({\sc b}), the next collision point is on the
  same side of the domain and $\oa>0$, so
  $\Theta ( z, j , \oa ) = \pi -\oa - \theta(z) + \theta( \Lambda_1 (
  z,j,\oa) )$.}
\label{F:angle}
\end{figure}

\subsection{Assumptions and results}
\label{sec:billiards-results}
 
Recall that the billiards reflection kernel $\Kb$ is a Markov kernel on the compact set $S = [ -\frac{\pi}{2}, \frac{\pi}{2} ]$.
In what follows, in addition to~\eqref{ass:ellipticity} above,
we assume the following density and spread conditions.
\begin{description}
\item[\namedlabel{ass:billiards-density}{B2}] Suppose that there is a bounded measurable $\kappa : S^2 \to \RP$ such that
   \begin{equation} \label{eq:billiards-density}
  \Kb (\alpha , B )= \int_B \kappa ( \alpha , \beta) \ud \beta , \text{ for all } B \in \cB(S).
\end{equation}
Moreover, suppose that $\kappa$ is uniformly equicontinuous in each
argument, i.e., for any $\eps >0$ there exists $\delta >0$ such that
(i)
$\sup_{\alpha \in S} | \kappa (\alpha, \beta) - \kappa (\alpha,
\beta') | \leq \eps$ for all $\beta, \beta' \in S$ with
$| \beta - \beta' | \leq \delta$, and (ii)
$\sup_{\beta \in S} | \kappa (\alpha, \beta) - \kappa (\alpha', \beta)
| \leq \eps$ for all $\alpha, \alpha' \in S$ with
$| \alpha - \alpha' | \leq \delta$.
\end{description}
\begin{description}
\item[\namedlabel{ass:loopable}{B3}] Suppose that~$\Kb$ is \emph{right progressive} in the sense that
there exists $\eps >0$ for which
\begin{equation}
\nonumber
 \Kb (\alpha, [\eps, \pi/2]) \geq \eps \text{ for all } \alpha \in S.
\end{equation}
\end{description}
Under~\eqref{ass:billiards-density}, if $\mu$ is an invariant measure for $\Kb$, then, by Fubini's theorem, 
\[
\mu (B) = \int_S \mu ( \ud \alpha ) \Kb ( \alpha, B ) = \int_B \left[ \int_S  \mu (\ud \alpha) \kappa (\alpha,\beta) \right] \ud \beta , ~\text{for any}~ B \in \cB(S).
\]
Hence every invariant measure $\mu$ has a density. The next assumption is uniqueness. 
\begin{description}
\item[\namedlabel{ass:billiards-invariant}{B4}]
 Suppose that $\Kb$ has a unique invariant probability measure~$\mu$, whose density we denote by~$\pib$.
\end{description} 

Define for $k \in \N$ and $\alpha \in S$, 
\begin{align}
\label{eq:rho-def}
\rho_k ( \alpha ) & := \int_S \Kb ( \alpha , \ud \beta) \tan^k \beta = \int_S \kappa (\alpha , \beta) \tan^k \beta \,  \ud \beta; \\
\label{eq:billiards-average-drift}
 \bar \rho_k & := \int_S \pib (  \alpha) \rho_k (\alpha)  \ud \alpha  = \int_S \pib (  \beta ) \tan^k \beta \,  \ud \beta;
\end{align}
the second equality in~\eqref{eq:rho-def}
uses the reflection density $\kappa$~from~\eqref{eq:billiards-density}, while the second
equality 
in~\eqref{eq:billiards-average-drift} uses~\eqref{eq:rho-def}, Fubini's theorem, and the invariance of~$\pib$, as assumed at~\eqref{ass:billiards-density}. 
Our first result deals with the case where $\bar \rho_1 \neq 0$.

\begin{proposition}
\label{prop:billiards-non-critical}
Suppose that~\eqref{ass:ellipticity}--\eqref{ass:billiards-invariant} hold. Then $\zeta$ is transient if $\bar \rho_1 >0$ and recurrent if $\bar \rho_1 < 0$. 
\end{proposition}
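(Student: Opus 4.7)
The plan is to transform the billiards process $\zeta$ into a half-strip Markov chain and then apply Proposition~\ref{prop:strips-crude}. Define $X_n := Z_n^{1-\gamma}$ and $\eta_n := (\chi_n, \alpha_n)$, so that $\xi_n = (X_n, \eta_n)$ is a time-homogeneous Markov chain on $\RP \times S'$ with $S' := \{-1,+1\} \times S$ a compact metric space. The exponent $1-\gamma$ is chosen so that displacements of order $Z_n^\gamma$ in the $Z$-coordinate correspond to $O(1)$ displacements in $X$, which matches the crude non-critical setup of Proposition~\ref{prop:strips-crude}. The goal is to show that the resulting average drift $\delta$ shares the sign of $\bar\rho_1$.

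The limit kernel on $S'$ is identified by sending $z\to\infty$. Since $\theta(z) = \arctan(\gamma z^{\gamma-1}) = O(z^{\gamma-1})$, and since~\eqref{ass:ellipticity} gives $|\beta_n| \le \theta_0 < \pi/2$, a planar geometry argument shows that, once $z$ is large enough that $\theta(z)+\theta_0 < \pi/2$, the outgoing ray~\eqref{eq:ell-non-zero} necessarily reaches the opposite side of the boundary, ruling out both the exceptional branch in~\eqref{eq:Lambda-def} and the ``same-side'' alternative in~\eqref{eq:inangle}. Hence $\chi_{n+1} = -\chi_n$ deterministically and $\alpha_{n+1} = \beta_n + \theta(z) + \theta(Z_{n+1}) = \beta_n + O(z^{\gamma-1})$. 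This identifies the limit kernel as $\cK((j,\alpha), \{-j\}\times B) = \Kb(\alpha, B)$, whose unique invariant probability is $\tilde\pi := \tfrac{1}{2}(\delta_{+1}+\delta_{-1}) \otimes \pib(\alpha)\,\ud\alpha$; uniqueness follows from~\eqref{ass:billiards-invariant}, verifying~\eqref{ass:kernel}(i). Total variation continuity of $u \mapsto \cK(u,\cdot)$ reduces on each component $\{\pm 1\}\times S$ to continuity of $\alpha \mapsto \Kb(\alpha,\cdot)$, which follows from the uniform equicontinuity of $\kappa$ in its first argument under~\eqref{ass:billiards-density}(ii); this gives~\eqref{ass:kernel}(ii). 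For~\eqref{ass:asymptotically-markov}, the conditional law of $\alpha_{n+1}$ given $\xi_n = (x,(j,\alpha))$ is the pushforward of $\Kb(\alpha,\cdot)$ under a deterministic near-identity map, and uniform equicontinuity of $\kappa$ in its second argument implies uniform convergence of densities in $L^1(\ud\beta)$, hence of $\Kss(x,u,\cdot)$ to $\cK(u,\cdot)$ in TV, uniformly in $u$.

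For the drift, solving the intersection of the outgoing ray~\eqref{eq:ell-non-zero} with the opposite boundary $y = -jx^\gamma$ via the equation $z^\gamma - t\cos(\theta(z)+\beta) = -(z + t\sin(\theta(z)+\beta))^\gamma$ and expanding to leading order $t = 2z^\gamma/\cos\beta + O(z^{2\gamma-1})$ gives, for large $z$ and $\beta \in [-\theta_0,\theta_0]$, the bound $Z_{n+1}-Z_n = 2Z_n^\gamma\tan\beta_n + O(Z_n^{2\gamma-1})$. A Taylor expansion of $(Z_n + \Delta)^{1-\gamma}$ then yields $X_{n+1} - X_n = 2(1-\gamma)\tan\beta_n + O(X_n^{-1})$, so $\mu_1(x,(j,\alpha)) \to d(j,\alpha) := 2(1-\gamma)\rho_1(\alpha)$ uniformly in $(j,\alpha)$, with $d \in \Cb(S')$ by continuity of $\rho_1$ under~\eqref{ass:billiards-density}. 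The increments are uniformly bounded on $\{X_n \ge x_B\}$, giving~\eqref{ass:moments} with any $p>1$; the $q$-moment bound near the origin holds for some $q \in (0,1)$ by a direct tail analysis of the uniform reflection from $Z_n = 0$ (where $Z_{n+1} \sim |\beta|^{-1/(1-\gamma)}$ and $X_{n+1} = Z_{n+1}^{1-\gamma}$ has any moment of order $<1$). Non-confinement~\eqref{ass:non-confinement} follows from~\eqref{ass:loopable}, which provides a uniform positive probability of a forward-pushing reflection. Integration then gives $\delta = \int_{S'} d\,\ud\tilde\pi = 2(1-\gamma)\bar\rho_1$, which has the same sign as $\bar\rho_1$ because $\gamma \in (0,1)$, and Proposition~\ref{prop:strips-crude} yields the conclusion.

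The main technical obstacle is obtaining the geometric expansion $Z_{n+1}-Z_n = 2Z_n^\gamma \tan\beta_n + O(Z_n^{2\gamma-1})$ with error uniform in $\beta_n \in [-\theta_0, \theta_0]$, together with the attendant uniform TV estimate in~\eqref{ass:asymptotically-markov}; both rely essentially on the ellipticity~\eqref{ass:ellipticity} to keep outgoing angles bounded away from $\pm\pi/2$, which also ensures the exceptional cases in~\eqref{eq:Lambda-def} and~\eqref{eq:inangle} can be excluded for all sufficiently large $z$.
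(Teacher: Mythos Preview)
Your approach is essentially the paper's: rescale $X_n := Z_n^{1-\gamma}$, verify the hypotheses of Proposition~\ref{prop:strips-crude} for the resulting half-strip process, and read off $\delta = 2(1-\gamma)\bar\rho_1$. The geometric expansions you outline (opposite-side collisions for large $z$, the displacement estimate $Z_{n+1}-Z_n = 2Z_n^\gamma\tan\beta_n + O(Z_n^{2\gamma-1})$, and the TV convergence via the near-identity perturbation of the angle) are exactly what the paper packages into Lemmas~\ref{lem:displacement}, \ref{lem:bound-near-apex}, \ref{lem:billiards-to-strip-markov}, \ref{lem:billiards-asymptotically-markov} and~\ref{lem:transformed-billiards-lamperti}.

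There is one substantive difference worth flagging. You carry the sign $\chi_n$ in the state and work on $S' = \{-1,+1\}\times S$, whereas the paper observes (Lemma~\ref{lem:billiards-to-strip-markov}) that by the reflection symmetry of~$\gR$ the pair $(X_n,\alpha_n)$ is already Markov, so $\chi_n$ can be dropped. This matters for~\eqref{ass:kernel}(i): on~$S$, uniqueness of the invariant probability is exactly~\eqref{ass:billiards-invariant}. On your $S'$, the limit kernel flips the sign deterministically, so an invariant probability decomposes as $\tfrac12\nu_+ + \tfrac12\nu_-$ with $\nu_\pm = \Ts_\cK \nu_\mp$; while $\tfrac12(\nu_+ + \nu_-)=\pib$ is forced by~\eqref{ass:billiards-invariant}, concluding $\nu_+=\nu_-$ amounts to showing $\Ts_\cK$ has no eigenmeasure with eigenvalue~$-1$, which does not follow from~\eqref{ass:billiards-invariant} alone. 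So your appeal to~\eqref{ass:billiards-invariant} for uniqueness on~$S'$ is a gap. The simplest repair is precisely the paper's symmetry reduction to $(X_n,\alpha_n)$; alternatively one could argue aperiodicity from~\eqref{ass:loopable}, but that is extra work you have not done.

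Two minor points: your $q$-moment check treats only $Z_n=0$, but~\eqref{eq:q-bound} must hold on all of $\{Z_n \le z_0\}$ (the paper's Lemma~\ref{lem:bound-near-apex} uses the boundedness of~$\kappa$ from~\eqref{ass:billiards-density} for $Z_n>0$); and non-confinement also uses~\eqref{ass:ellipticity} via a lower bound on the inter-collision distance (Lemma~\ref{lem:lower-bound}) to escape a neighbourhood of the apex, not only~\eqref{ass:loopable}.
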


As we shall explain when we make the connection to the half-strip model,
the critical (Lamperti) regime corresponds to~$\bar \rho_1 = 0$;
this case occurs if, for example, $\pib$ is symmetric about~$0$. 
As in the half-strip model, the case where $\rho_1 (\alpha) =0$ for all $\alpha$ is simpler,
and corresponds to the \emph{strict} Lamperti regime in the terminology of Section~\ref{sec:strips-results}. Here the
key quantity determining the classification is 
\begin{equation}
\label{eq:critical-parameter-strict}
  \gammacs : = \frac{\bar \rho_2}{1+ 2\bar \rho_2} = \frac{\int_{S} \pib ( \alpha) \tan^2 \alpha \, \ud \alpha}{1 + 2\int_{S}   \pib ( \alpha) \tan^2 \alpha \, \ud \alpha}.
\end{equation}
We use the extra subscript `$0$' to indicate the strict Lamperti setting, to parallel~\eqref{ass:strict-lamperti}.
Note that, by~\eqref{ass:ellipticity},
$\int_{S} \pib ( \alpha) \tan^2 \alpha \, \ud \alpha <\infty$ and therefore
$\gammacs \in (0,1/2)$.

\begin{theorem}
\label{thm:billiards-strict-lamperti}
Suppose that~\eqref{ass:ellipticity}--\eqref{ass:billiards-invariant} hold, and $\rho_1 ( \alpha ) = 0$ for all $\alpha \in S$.
Then $\zeta$ is transient if $\gammacs < \gamma < 1$ and recurrent if $0 < \gamma < \gammacs$,
where $\gammacs$ is given by~\eqref{eq:critical-parameter-strict}.
\end{theorem}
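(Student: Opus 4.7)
The strategy is to reduce Theorem~\ref{thm:billiards-strict-lamperti} to Theorem~\ref{thm:strips-strict-lamperti} via the scaling $X_n := Z_n^{1-\gamma}$, treating $\eta_n := (\chi_n, \alpha_n)$ as the auxiliary coordinate in the compact space $S' := \{-1,+1\} \times S$. The exponent $1-\gamma$ is chosen so that the increments of $X_n$ have variance of order~$1$, placing the process in the Lamperti regime. Several of the hypotheses of Theorem~\ref{thm:strips-strict-lamperti} are straightforward: the moments condition~\eqref{ass:moments} follows from~\eqref{ass:ellipticity}, which bounds $|\tan \beta_n| \leq \tan \theta_0 < \infty$ and hence uniformly bounds $|X_{n+1} - X_n|$ for large $X_n$; the limiting kernel $\cK$ on $S'$ (that flips $\chi$ and maps $\alpha$ via $\Kb$) has unique invariant probability $\tfrac{1}{2}(\delta_{+1} + \delta_{-1}) \otimes \pib$ by~\eqref{ass:billiards-invariant}; TV-continuity of $\alpha \mapsto \Kb(\alpha, \, \cdot \,)$ (hence of $\cK$) follows from the uniform equicontinuity of $\kappa$ in~\eqref{ass:billiards-density}(ii); and non-confinement~\eqref{ass:non-confinement} comes from the right-progressive condition~\eqref{ass:loopable}, which gives the increments of $X_n$ a variance that is bounded below, preventing the process from settling.

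The heart of the argument is the expansion of the increments. For large $Z_n$ the trajectory always crosses the horn (since $\theta(Z_n) + |\beta_n| < \pi/2$ when $Z_n$ is large), so elementary geometry yields the implicit identity
\begin{equation}\label{eq:plan-implicit}
Z_{n+1} - Z_n = \bigl(Z_n^\gamma + Z_{n+1}^\gamma\bigr)\tan\bigl(\theta(Z_n) + \beta_n\bigr),
\end{equation}
where $\theta(z) = \arctan(\gamma z^{\gamma-1}) = \gamma z^{\gamma-1}(1+o(1))$. Solving~\eqref{eq:plan-implicit} perturbatively in the small parameter $Z_n^{\gamma - 1}$, then Taylor expanding via $X_n = Z_n^{1-\gamma}$, one obtains
\begin{equation}\label{eq:plan-Xincrement}
X_{n+1} - X_n = 2(1-\gamma)\tan \beta_n + \frac{2\gamma(1-\gamma)\sec^2 \beta_n}{X_n} + O\bigl(X_n^{-2}\bigr) .
\end{equation}
Taking conditional expectation given $\alpha_n = \alpha$, using $\beta_n \sim \Kb(\alpha, \, \cdot \,)$ and the hypothesis $\rho_1 \equiv 0$, yields $\mu_1(x, (\chi, \alpha)) = 2\gamma(1-\gamma)(1 + \rho_2(\alpha))/x + o(x^{-1})$ and $\mu_2(x, (\chi, \alpha)) = 4(1-\gamma)^2 \rho_2(\alpha) + o(1)$. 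Thus~\eqref{ass:strict-lamperti} holds with $d \equiv 0$, $e_{(\chi, \alpha)} = 2\gamma(1-\gamma)(1+\rho_2(\alpha))$, and $\sigma^2_{(\chi,\alpha)} = 4(1-\gamma)^2 \rho_2(\alpha)$.

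Asymptotic Markovianity~\eqref{ass:asymptotically-markov} uses the incoming-angle formula~\eqref{eq:inangle} in the crossing case, which gives $\alpha_{n+1} = \beta_n + \theta(Z_n) + \theta(Z_{n+1}) = \beta_n + 2\gamma Z_n^{\gamma-1} + o(Z_n^{\gamma-1})$. So the law of $\alpha_{n+1}$ is obtained from that of $\beta_n \sim \Kb(\alpha_n, \, \cdot \,)$ by a $C^1$-perturbation whose derivative deviates from the identity by $O(Z_n^{\gamma - 1})$. The uniform equicontinuity of $\kappa$ in~\eqref{ass:billiards-density}(ii) then converts this into a TV error vanishing uniformly in $\alpha_n$ as $Z_n \to \infty$; combined with $\chi_{n+1} = -\chi_n$ deterministically for large $Z_n$, this gives~\eqref{ass:asymptotically-markov} for the kernel $\cK$ on $S'$.

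Applying Theorem~\ref{thm:strips-strict-lamperti} and integrating over the stationary distribution on $S'$ gives
\begin{equation}\label{eq:plan-delta}
\delta_- = \int_S \bigl[ 2 e_\alpha - \sigma^2_\alpha \bigr] \pib (\alpha) \ud \alpha = 4(1-\gamma) \bigl[ \gamma + (2\gamma - 1) \bar\rho_2 \bigr] .
\end{equation}
Since $1 + 2 \bar\rho_2 > 0$, the sign of~\eqref{eq:plan-delta} matches the sign of $\gamma - \gammac$ with $\gammac = \bar\rho_2/(1 + 2\bar\rho_2)$, and Theorem~\ref{thm:strips-strict-lamperti} delivers transience for $\gamma > \gammac$ and recurrence for $\gamma < \gammac$. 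The main technical obstacle is maintaining uniform control over the error terms in~\eqref{eq:plan-implicit} and the TV estimate for~\eqref{ass:asymptotically-markov}: the leading drift $2(1-\gamma)\tan \beta_n$ averages to zero by the hypothesis $\rho_1 \equiv 0$, so the phase transition is genuinely driven by the \emph{subleading} $1/X_n$ term produced by the boundary-curvature correction $\theta(Z_n)\sec^2\beta_n$ competing with the variance $4(1-\gamma)^2 \tan^2 \beta_n$, and one must rule out spurious contributions of the same order when expanding~\eqref{eq:plan-implicit}.
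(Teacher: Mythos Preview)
Your approach is essentially the paper's: scale $X_n = Z_n^{1-\gamma}$, verify the hypotheses of Theorem~\ref{thm:strips-strict-lamperti} via the displacement expansion~\eqref{eq:plan-Xincrement}, and compute $\delta_- = 4(1-\gamma)[\gamma(1+2\bar\rho_2) - \bar\rho_2]$ to read off the phase transition at $\gammac$. Two minor points where the paper differs: (i) by the reflection symmetry of $\gR$, $\Lambda_1(z,j,\beta)$ and $\Theta(z,j,\beta)$ are independent of $j$, so $(X_n,\alpha_n)$ is already Markov on $\RP\times S$ and tracking $\chi_n$ is unnecessary; (ii) the $q$-moment bound in~\eqref{ass:moments} on $\{X_n < x_B\}$ does \emph{not} follow from~\eqref{ass:ellipticity} alone (from near the apex a glancing reflection can send $Z_{n+1}$ arbitrarily far), and the paper supplies a separate estimate (Lemma~\ref{lem:bound-near-apex}) using the uniform boundedness of the density $\kappa$ from~\eqref{ass:billiards-density} to get $\Pr(X_{n+1}>r\mid\cF_n)=O(1/r)$ and hence $q<1$.
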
  

\begin{remarks}
\phantomsection
\label{rems:billiards-real}
\begin{remenumi}
\item
\label{rems:billiards-real-a}
In the case where
  $\kappa ( \alpha , \beta)$ does not depend
  on~$\alpha$, reflection angles are i.i.d.~with density~$\pib$, and the result of
  Theorem~\ref{thm:billiards-strict-lamperti} is due to~\cite{mvw}.
\item
\label{rems:billiards-real-b}
Recurrence/transience of $\zeta$ will transfer to the continuous-time process
that follows the trajectories of the particle. 
This is because Lemma~\ref{lem:lower-bound} below gives a lower bound on the real time
elapsed between successive collisions under assumption~\eqref{ass:ellipticity}.
\item
\label{rems:billiards-real-c}
Theorem~\ref{thm:billiards-strict-lamperti} does not admit $\gamma = \gammacs$. 
We omit this case from our analysis, since to treat the corresponding critical case
of the half-strip model requires slightly stronger assumptions, 
a more refined Lyapunov function, and associated additional technicalities: cf.~Remark~\ref{rem:null-critical-equality}.
However, we anticipate that the billiards model would satisfy the necessary stronger
assumptions, and hence we would expect that the case $\gamma = \gammacs$ is recurrent. 
\item
\label{rems:billiards-real-d}
One extension of our model is to a Markov process $(\zeta_n, \alpha_n, u_n)$, in which $u_n \in V$ 
is the speed of the particle between the $n$th and $(n+1)$st collisions,
 $V \subset (0,\infty)$ is compact, 
and the refection kernel $\cK$ is extended to operate on $S^\star: = S \times V$.
Under the natural extension of our assumptions, our analysis extends to this case, at the expense of some heavier notation, 
working now on the half-strip $\RP \times S^\star$.
If outgoing angle $\beta_n$ depends only on incoming angle $\alpha_n$, and not on incoming speed $u_n$, then
 our recurrence/transience results would be unchanged, the speeds playing no role. 
 In general, the density~$\varpi$ in the critical parameter $\gammacs$ at~\eqref{eq:critical-parameter-strict} would correspond to the $S$-projection of the stationary distribution
 of $\cK$ on $S^\star$.
\item
\label{rems:billiards-real-e}
A possible generalization would be to relax the assumption~\eqref{ass:ellipticity},
allowing the reflection density $\kappa (\alpha, \beta)$ to be supported on the whole of
$\beta \in S = [-\frac{\pi}{2}, +\frac{\pi}{2} ]$, but with suitable bounds on the tails near $\beta = \pm \pi/2$.
The half-strip setting of Section~\ref{sec:strips-results} can accommodate unbounded increments, 
and the $p$th-moments condition in~\eqref{eq:lamperti-p-moments}
translates to a condition of the form $\sup_{\alpha \in S} \int_S \kappa (\alpha, \beta) | \tan \beta |^p \ud \beta < \infty$ (the relevant technical results on the increments of the billiards process are Lemmas~\ref{lem:displacement} and~\ref{lem:billiards-to-strip-markov}).
However, the possibility of multiple collisions in rapid succession introduces some technical obstacles in the billiards setting. Furthermore, the example of the Lambertian density $\kappa (\alpha , \beta ) = (1/2) \cos \beta$ has $\int_S \kappa (\alpha, \beta)   | \tan \beta |^p  \ud \beta  < \infty$ if and only if $p <2$,
which suggests that further exploration of this interesting generalization might fruitfully take place in a heavy-tailed setting; in the Lambertian case, taking $\bar\rho_2 = \infty$ in~\eqref{eq:critical-parameter-strict} suggests the conjecture~$\gammacs = 1/2$ (cf.~Open Problem~6.4.13 in~\cite[p.~307]{MenPopWad16}).
\end{remenumi}
\end{remarks}

The more general case requires further assumptions, and produces a less explicit result.
In particular, we assume that the reflection densities in~\eqref{eq:billiards-density} are sufficiently smooth.

\begin{description}
\item[\namedlabel{ass:density-smoothness}{B5}] 
Suppose that 
$\kappa' (\alpha,\beta) := (\partial / \partial \beta ) \kappa (\alpha,\beta)$  
and $\kappa'' (\alpha,\beta) := (\partial^2 / \partial^2 \beta  ) \kappa (\alpha,\beta)$ exist,
are continuous in each argument, and are bounded uniformly for all $\alpha, \beta \in S$.
\end{description}

Under~\eqref{ass:density-smoothness}, 
	$\pib (\beta) = \int_S \pib ( \alpha)  \kappa (\alpha , \beta ) \ud \alpha$
	is differentiable, since $\frac{\partial}{\partial \beta} \kappa (\alpha, \beta)$ exists and is uniformly bounded over $\alpha, \beta \in S$, with 
	$\pib' (\beta) := \frac{\partial}{\partial \beta} \pib (\beta)$ given by
	\begin{equation}
	\label{eq:pi-derivative}
	\pib' (\beta ) = \int_S  \kappa' (\alpha , \beta ) \pib ( \alpha) \ud \alpha  .
        \end{equation}
				In particular, $\pib$ is continuous on $S$ and vanishes outside $S_0$, so $\pib (\pm \theta_0) = 0$.

 \begin{theorem}
\label{thm:billiards-general-lamperti}
Suppose that~\eqref{ass:ellipticity}--\eqref{ass:density-smoothness} hold, and~$\bar \rho_1 =0$. 
Then there exists $\psi_0 \in \Cb (S)$  (unique up to translation) with
$\int_S ( \psi_0 (\alpha) - \psi_0(\beta) ) \kappa (\alpha, \beta) \ud \beta = 2 \rho_1 (\alpha)$
for all $\alpha \in S$.  
Let
\begin{align}
\label{eq:A-def}
 A_1 & :=\int_S \psi_0 (\beta ) \pib (\beta ) \tan \beta \, \ud \beta, ~~ A_2:= \int_S \psi_0 (\beta)   \pib' ( \beta) \, \ud \beta .\end{align}
 Then $A_1 + \bar\rho_2 \geq 0$. Suppose that the quantities in~\eqref{eq:A-def} satisfy
 \begin{equation}
\label{eq:A-conditions} 
 1 +A_1 - A_2 + 2\bar\rho_2 \geq 0, \text{ and } \max \{ A_1 + \bar\rho_2 , 1 +A_1 - A_2 + 2\bar\rho_2 \} > 0,
 \end{equation}
 and define 
 \begin{equation}
\label{eq:critical-parameter-general}
\gammag := \left( \frac{A_1 + \bar \rho_2 }{1 + A_1 - A_2 + 2 \bar \rho_2 } \right) \wedge 1 . 
\end{equation}
Then $\gammag \in [0,1]$, and $\zeta$ is transient if $\gammag < \gamma < 1$ and recurrent if $0 < \gamma < \gammag$.

Moreover, if, in addition, 
$\lim_{n \to \infty} \sup_{\alpha \in S} \| \cK^n ( \alpha, \, \cdot \,) - \mu (\, \cdot \,) \|_\tv = 0$, 
where~$\mu$ is the unique invariant measure from~\eqref{ass:billiards-invariant}, 
then one may take~$\psi_0$ defined by
\begin{equation}
\label{eq:psi-0-billiards}
\psi_0 (\alpha ) = 2 \sum_{n = 0}^\infty \int_S \cK^n ( \alpha, \ud \beta ) \rho_1 (\beta ) = 2 \sum_{n = 1}^\infty \int_S \cK^n ( \alpha, \ud \beta ) \tan \beta .
\end{equation}
\end{theorem}

\begin{remark}
\label{rem:billiards-general}
Under the hypothesis~\eqref{eq:A-conditions}, 
the fraction in~\eqref{eq:critical-parameter-general} is not $0/0$, and hence $\gammag \in [0,1]$ is well-defined (with the usual interpretations that $1/0 := \infty$ and $\infty \wedge 1 := 1$).
We do not rule out, however, the possibilities $\gammag = 0$ (if and only if $A_1 + \bar\rho_2 = 0$)
or $\gammag = 1$. In these extreme cases there would be no phase transition for $\gamma \in (0,1)$. 
\end{remark}

The quantities $A_0, A_1$ depend on the function $\psi_0$ and on $\varpi$, and so are hard to compute in general.
However, for a restricted class of reflection kernels,
we apply Theorem~\ref{thm:billiards-general-lamperti} to obtain Proposition~\ref{prop:example-family},
which shows that 
the classification of Theorem~\ref{thm:billiards-strict-lamperti}
extends beyond the case $\rho_1 (\alpha ) \equiv 0$: this can be seen as a further
generalization of the results of~\cite{mvw} from the case of i.i.d.~reflections.

\begin{proposition}
\label{prop:example-family}
Suppose that~\eqref{ass:ellipticity}--\eqref{ass:density-smoothness} hold, $\bar \rho_1 =0$,
and that for some $\lambda \in (-1,1)$,
\begin{equation}
\label{eq:rho1-condition}
\rho_1 (\alpha) = \lambda \tan \alpha, \text{ for all } \alpha \in S .
\end{equation}
Suppose also that $\varpi (\beta) = \varpi(-\beta)$ for all $\beta \in S$. 
Then $\zeta$ is transient if $\gamma > \gammacs$ and recurrent if $\gamma < \gammacs$,
where $\gammacs \in (0,1/2)$ is given by~\eqref{eq:critical-parameter-strict}.
\end{proposition}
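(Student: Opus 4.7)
The plan is to apply Theorem~\ref{thm:billiards-general-lamperti} and show that, under the hypothesis~\eqref{eq:rho1-condition}, the critical value $\gammag$ from~\eqref{eq:critical-parameter-general} collapses to the simpler $\gammac$ from~\eqref{eq:critical-parameter-strict}. The crucial observation is that~\eqref{eq:rho1-condition} says exactly that $f(\alpha) := \tan \alpha$ is an eigenfunction of the operator $T_\cK$ of Remark~\ref{rmks:psi}(a) with eigenvalue~$\lambda$: by~\eqref{eq:rho-def}, $T_\cK f(\alpha) = \rho_1(\alpha) = \lambda f(\alpha)$. Hence, setting $\psi_0 := \frac{2\lambda}{1-\lambda} f$ yields $(I - T_\cK)\psi_0 = \frac{2\lambda}{1-\lambda}(1-\lambda) f = 2\lambda f = 2 \rho_1$, which is exactly the equation characterising~$\psi_0$; the coefficient is well-defined precisely because $|\lambda| < 1$. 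This is consistent with~\eqref{eq:psi-0-billiards}, which collapses to $\psi_0 = 2 \sum_{n \geq 1} \lambda^n f = \frac{2\lambda}{1-\lambda} f$.

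With $\psi_0 = \frac{2\lambda}{1-\lambda} \tan$ identified, the next step is to evaluate the constants $A_1, A_2$ from~\eqref{eq:A-def}. Direct substitution gives $A_1 = \frac{2\lambda}{1-\lambda} \bar \rho_2$. For $A_2$, we use integration by parts on $[-\theta_0, \theta_0]$: since $\pib$ vanishes at $\pm \theta_0$ (noted after~\eqref{eq:pi-derivative}), the boundary terms disappear, and using $\sec^2 = 1 + \tan^2$,
\[
\int_S \tan \beta \, \pib'(\beta) \, \ud \beta = - \int_S \sec^2 \beta \, \pib(\beta) \, \ud \beta = -(1 + \bar \rho_2),
\]
so $A_2 = -\frac{2\lambda}{1-\lambda}(1 + \bar \rho_2)$. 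Substituting into~\eqref{eq:c-gamma-def} and collecting the common factor $(1+\lambda)/(1-\lambda) > 0$ yields, after a short algebraic simplification,
\[
c(\gamma) = \frac{1+\lambda}{1-\lambda} \bigl[ \gamma (1 + 2 \bar \rho_2) - \bar \rho_2 \bigr],
\]
which has the same sign as $\gamma - \bar \rho_2 / (1 + 2 \bar \rho_2) = \gamma - \gammac$; equivalently, $\gammag = \gammac$. The recurrence/transience dichotomy then follows directly from Theorem~\ref{thm:billiards-general-lamperti}.

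The main technical subtlety is that $\tan$ is not bounded continuous on all of~$S$; however, by~\eqref{ass:ellipticity} the density $\kappa(\alpha, \cdot)$ and the invariant density $\pib$ are supported on the compact set $S_0 \subsetneq S$, where $\tan$ is bounded and continuous. One may therefore take $\psi_0$ to agree with $\frac{2\lambda}{1-\lambda} \tan \alpha$ on~$S_0$ and extend continuously (hence boundedly) elsewhere; this does not affect the equation for~$\psi_0$ on the effective support (since $T_\cK \psi_0(\alpha)$ only sees $\psi_0|_{S_0}$) nor any integral entering $A_1, A_2$. The symmetry hypothesis $\pib(\beta) = \pib(-\beta)$ plays no role in the computation above beyond ensuring the consistency of $\bar \rho_1 = 0$ (since $\tan$ is odd).
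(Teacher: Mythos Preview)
Your proof is correct and follows essentially the same approach as the paper: identify $\psi_0 = \frac{2\lambda}{1-\lambda}\tan$ explicitly, compute $A_1$ and $A_2$, and substitute into Theorem~\ref{thm:billiards-general-lamperti}. Your integration-by-parts argument for $A_2$ is in fact slightly cleaner than the paper's (which routes through the oddness of $\psi_0\,\varpi$ and hence uses the symmetry of $\varpi$), and your closing remark that the hypothesis $\varpi(\beta)=\varpi(-\beta)$ is not actually needed---only $\varpi(\pm\theta_0)=0$, which already follows from~\eqref{ass:density-smoothness}---is a sharp observation the paper does not make.
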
 

The next example gives a family of reflection kernels
to which Proposition~\ref{prop:example-family} applies.

\begin{example}
\label{ex:example-family}
Fix $\lambda \in (-1,1)$. 
Take for $\alpha, \beta \in S$,
\begin{equation}
\label{eq:example-density}
  \kappa (\alpha , \beta) = f (\alpha ) g (\beta ) + ( 1 - f(\alpha) ) g (-\beta ),
\end{equation}
where~$f: S \to [0,1]$ is uniformly continuous on~$S$ and satisfies
\[
f (\alpha ) = \frac{1}{2} + \frac{\tan \alpha}{2\tan \theta_0}, ~~~ \alpha \in S_0,
\]
and $g: S \to \RP$ will be constructed later to satisfy (i) $g$ is twice continuously differentiable on~$S$, (ii) $g (\beta ) = 0$ for $\abs{\beta} \geq \theta_0$;
(iii) $\int_{S_0} g(\beta) \ud \beta =1$; and (iv)
\begin{equation}
\label{eq:pulse-g}
 \int_{S_0} g (\beta ) \tan \beta \ud \beta = \lambda \tan \theta_0 .
\end{equation}
These properties for~$g$ ensure that~\eqref{ass:density-smoothness} holds,
and, by~(iii),
$\int_{S_0} g(-\beta) \ud \beta =1$, so that
$\kappa$ defined at~\eqref{eq:example-density} satisfies $\int_{S_0} \kappa (\alpha, \beta ) \ud \beta =1$ for all $\alpha \in S$.
Note also that~\eqref{eq:pulse-g} implies 
\[
  \int_{S_0} g (-\beta ) \tan \beta \ud \beta =  \int_{S_0} g (\beta ) \tan (-\beta) \ud \beta = - \lambda \tan \theta_0 , \]
so that, for any $\alpha \in S$, $\rho_1 (\alpha)$ defined at~\eqref{eq:rho-def} satisfies
\begin{align*} \rho_1 (\alpha) & = \int_{S_0} \kappa (\alpha, \beta ) \tan \beta \ud \beta   =  \left[ 2 f(\alpha) -1 \right]  \lambda \tan \theta_0 = \lambda \tan \alpha ,\end{align*}
by choice of~$f$, verifying~\eqref{eq:rho1-condition}.
Any stationary density $\pib$ for $\kappa$ given by~\eqref{eq:example-density} must satisfy
\[
  \pib (\beta ) = a g (\beta) + (1 - a) g (-\beta), ~~~ a: = \int_{S_0} f(\alpha) \varpi (\alpha ) \ud \alpha .
\]
Substituting the former equality for $\pib$ in the definition of $a$ and using the fact that $f(-\alpha) = 1 - f(\alpha)$
leads to
$ a ( 1- b) = (1-a) (1-b)$, where $b:= \int_{S_0} f(\alpha) g(\alpha) \ud \alpha \in (0,1)$.
It follows that $a=1/2$, and hence the unique stationary density is
\[
  \pib (\beta ) = \frac{1}{2} \left( g(\beta) + g (-\beta) \right) = \pib (-\beta) , ~~~ \beta \in S.
\]
Hence, by~\eqref{eq:billiards-average-drift} and~\eqref{eq:rho1-condition}, 
$\bar \rho_1 = \lambda \int_{S_0} \varpi ( \alpha )  \tan \alpha\,  \ud \alpha = 0$.

Thus all the conditions of Proposition~\ref{prop:example-family} are satisfied. It remains to check that a suitable~$g$ satisfying (i)---(iv)
above can be chosen.
We present one reasonably concrete construction.
For real numbers $m,n \geq 3$ to fixed later, take
 $P(\alpha): = (\alpha + \theta_0)^m (\theta_0 - \alpha)^n$ for $|\alpha| \leq \theta_0$.
 Then $P$ has zeros at~$\pm \theta_0$, and is strictly positive on $(-\theta_0, \theta_0)$. Define
\begin{equation}
\nonumber
 G(\alpha) :=\begin{cases}
P (\alpha ) & \text{if } |\alpha|< \theta_0,\\
0 & \text{otherwise}.
 \end{cases}
\end{equation}
For $\alpha \in (-\theta_0, \theta_0)$, $P(\alpha)$ is infinitely differentiable, and the $k$th derivative $P^{(k)} (\alpha)$
is a sum of products involving $ (\alpha + \theta_0)^r (\theta_0 - \alpha)^s$
for exponents $r,s$ satisfying $m-k \leq r \leq m$ and $n-k \leq s \leq n$.
Since $m, n \geq 3$, 
it follows that 
 the first two derivatives of~$P$ approach~$0$ continuously at $\pm \theta_0$. 
We normalize $G$ to obtain our~$g$, via
\begin{equation}
\nonumber
g(\alpha) := G(\alpha)/Z, ~~~ Z := \int_{S_0} P(\alpha) \ud \alpha = ( 2 \theta_0)^{m+n+1} \frac{\Gamma (n+1) \Gamma (m+1)}{ \Gamma (n+m+2)} ,
\end{equation}
evaluating the integral using the change of variable~$\alpha = \theta_0 (2u-1)$, $u \in [0,1]$. 
By the properties of $P$ and $G$ described, properties (i)--(iii) hold for this~$g$.

To achieve~(iv), i.e.~\eqref{eq:pulse-g}, we describe how to tune $n, m$ in the choice of $P$.
Note that~$P$ admits a unique maximum in $[-\theta_0, \theta_0]$ at the point $\alpha = \alpha^* := \frac{m-n}{m+n} \theta_0$.
Suppose that $\lambda \in (0,1)$. Fix $n$ ($n=3$ will do).
Now, as $m \to \infty$, $\alpha^* \to \theta_0$
and $g$ converges to the Dirac mass at $\theta_0$, and
by the dominated convergence theorem, 
\begin{equation}
\nonumber
\lim_{m \to \infty} \int_{S_0}  g(\alpha) \tan \alpha \, \ud \alpha = \tan \theta_0.
\end{equation}
The function
$m \mapsto \int_{S_0}  g(\alpha) \tan \alpha \, \ud \alpha$ is continuous,
takes value $0$ when $m=n$ and, as $m \to \infty$, eventually exceeds
$\lambda  \tan \theta_0$ (since $\lambda < 1$). Hence, by the intermediate value theorem, there exists $m > n$ for which~\eqref{eq:pulse-g} holds.
On the other hand, if $\lambda \in (-1,0)$, a similar argument applies with $n \to \infty$. If $\lambda =0$, we can take $m=n$.
\end{example}

\begin{remark}
We prove our results for the stochastic billiards model
by considering the process $(X_n ,\alpha_n)$ where $X_n := Z_n^{1-\gamma}$,
and $\alpha_n$ is the sequence of \emph{incoming} angles, 
in the framework of the half-strip model of Section~\ref{sec:strips}.
One could instead work with the process $(X_n ,\beta_n)$, with $\beta_n$ the sequence
of \emph{outgoing} angles. Again the results of  Section~\ref{sec:strips}
can be applied,  although the technical details differ. It is worth noting that although the~reflection kernel $\Kb$
is the same in both approaches, $\rho_1 (\alpha)$ and hence $\psi_0$ differ, but the
ultimate quantities $A_1$ and $A_2$ in~\eqref{eq:A-def} are the same.
\end{remark}

\section{Proofs for the half-strip model}
\label{sec:strips-proofs}

In this section we prove the results presented in Section~\ref{sec:strips-results},
and we adopt the notation of that section. In particular, note that $S$ is a general compact metric space. We first in Section~\ref{sec:strict-lamperti-proofs} work in the strict Lamperti regime, and then (in Section~\ref{sec:general-lamperti-proofs}) use a transformation to reduce the more general Lamperti setting to the strict case, with appropriate transformation of parameters. 

\subsection{The strict Lamperti regime}
\label{sec:strict-lamperti-proofs}

To prove 
Theorems~\ref{thm:strips-strict-lamperti} and~\ref{thm:strict-lamperti-moments}, we use
a Lyapunov function among the class of functions
$H_{\nu,\varphi}: \Sigma \to \RP$ defined in terms of a given
$\varphi \in \Cb^+(S)$ and a parameter $\nu \in \R$ by
\begin{equation}
\label{eq:strips-lyapunov}
 H_{\nu,\varphi}(x,u): = \begin{cases} 1 & \text{if } x \leq 1, \\
	x^{\nu} + \frac{\nu}{2} x^{\nu-2} \varphi(u) & \text{if } x > 1. \end{cases}
\end{equation}
For appropriate choices of $\nu$ and $\varphi$, depending on the parameters of the process $\xi_n$, the process $H_{\nu,\varphi} (\xi_n)$ will satisfy an appropriate super/submartingale condition outside a bounded set, which will enable us to apply martingale methods for adapted processes on $\RP$. 
In this direction, the following result estimates the expected increment of the process $H_{\nu,\varphi} (\xi_n)$.

\begin{proposition}
\label{prop:strips-lyapunov}
Suppose that~\eqref{ass:kernel}
and~\eqref{ass:asymptotically-markov} hold, and
that~\eqref{ass:moments} holds with~$p>2$ and~$q>0$. Suppose also
that~\eqref{ass:strict-lamperti} holds.  Let $\varphi\in \Cb^+(S)$ and
$\nu \in (2-p, p \wedge q]$. Then $\Exp H_{\nu, \varphi} ( \xi_n ) < \infty$
for all $n \in \ZP$, and
\[
  \Exp \bigl[ H_{\nu, \varphi} (\xi_{n+1}) - H_{\nu, \varphi}(\xi_n) \bigmid \cF_n \bigr] = W_{\nu,\varphi} ( \xi_n ) , \as,
\]
where
\begin{align}
\label{eq:strips-lyapunov-increment}
W_{\nu,\varphi} (x, u) 
& =  \frac{\nu}{2} x^{\nu-2} \left[ 2e_{u}  - (1-\nu) \sigma^2_{u} + \int_{ S} ( \varphi (v) - \varphi (u) ) \cK (u , \ud v) + \eps_{x,u} \right] , 
    \end{align}
with $\lim_{x \to \infty} \sup_{u \in S} | \eps_{x,u} | = 0$.
\end{proposition} 

We defer the proof of Proposition~\ref{prop:strips-lyapunov}
until the end of this subsection.  Let
\begin{equation}
\label{eq:pi-kernel}
 \Cb^0 (S) := \Big\{ g \in \Cb (S) : \int_S g (u)  \pii (\ud u) = 0\Big\},
\end{equation}
where~$\pii$ is the stationary measure from~\eqref{ass:kernel}\ref{ass:kernel-i}.
Define $g_\theta \in \Cb (S)$ by $g_\theta (u) : = 2e_{u}  + (2\theta -1)\sigma^2_{u}
  -\delta_{\theta}$,
  where $\delta_\theta$ is defined at~\eqref{eq:deltadef}.
 Then, by~\eqref{eq:deltadef},  $\int_S g_{\theta} (u) \uppi(\ud u) = 0 $, i.e., $g_{\theta} \in \Cb^0(S)$ as at~\eqref{eq:pi-kernel}. Hence, by~\eqref{ass:kernel} and Proposition~\ref{prop:fredholm}, 
there is a $\varphi_{\theta} \in \Cb^+ (S)$ for which
\begin{equation}
\label{eq:choice-of-psi-moments}
  \int_S  (\varphi_{\theta} (u) - \varphi_{\theta} (v) ) \cK ( u, \ud v )  = g_{\theta} ( u ), \text{ for all } u \in S;
\end{equation}
note we have specified that $\varphi_\theta \geq 0$.

We prove Theorem~\ref{thm:strips-strict-lamperti}\ref{thm:strips-strict-lamperti-a};
 Theorem~\ref{thm:strips-strict-lamperti}\ref{thm:strips-strict-lamperti-b} is a special case of Theorem~\ref{thm:strict-lamperti-moments} (cf.~Remark~\ref{rem:null-critical-equality}) which we prove later in this section.

\begin{proof}[Proof of Theorem~\ref{thm:strips-strict-lamperti}\textnormal{\ref{thm:strips-strict-lamperti-a}}.]
Recall the definitions of $e, \sigma^2$ from~\eqref{eq:generalized-lamperti-regime} and $\delta_0$ in the $\theta=0$ case of~\eqref{eq:deltadef}. Recall also that the functions $g_0 \in \Cb^0(S)$ and $\psi_0 \in \Cb^+ (S)$
satisfy $g_0 (u) := 2 e_u - \sigma^2_u - \delta_0$ and the $\theta=0$ case of~\eqref{eq:choice-of-psi-moments}.
Then 
$W_{\nu,\varphi_0}$ given at~\eqref{eq:strips-lyapunov-increment} satisfies
\begin{equation}
\label{eq:delta-recurrence}
W_{\nu,\varphi_0} (x, u) =  \frac{\nu}{2} x^{\nu-2} \left( \delta_0 + \nu \sigma^2_u + \eps_{x,u} \right),
\end{equation}
with $\sup_{u\in S} \abs{\eps_{x,u}} \to 0$ as $x \to \infty$.
Suppose that $\delta_0 < 0$.
Proposition~\ref{prop:strips-lyapunov} with~\eqref{eq:delta-recurrence} then shows that
there exist $\nu > 0$ and $r_0 \in \RP$ for which 
 \begin{equation}\label{eq:strips-lyapunov-drift}
 \Exp \bigl[ H_{\nu, \varphi_0} (\xi_{n+1}) - H_{\nu, \varphi_0}(\xi_n) \bigmid \cF_n \bigr] \leq 0, \text{ on } \{ X_n \geq r_0 \}. 
\end{equation}
Since $\nu>0$ it follows that
$\inf_{u \in S} H_{\nu, \varphi_0} (x,u) \to \infty$ as
$x \to \infty$. Then by~\eqref{ass:non-confinement}
 and Lemma~\ref{lem:recurrence}, we conclude that~$\xi$
is recurrent.  On the other hand, suppose that $\delta_0 > 0$.  Now,
by~\eqref{eq:delta-recurrence}, there exist $\nu < 0$ and
$r_0 \in \RP$ for which~\eqref{eq:strips-lyapunov-drift} again holds,
but now $\sup_{u \in S} H_{\nu, \varphi_0} (x,u) \to 0$ as
$x \to \infty$, and Lemma~\ref{lem:transience} 
with~\eqref{ass:non-confinement} shows that~$\xi$ is transient.
\end{proof}

Now we turn to the proof of Proposition~\ref{prop:strips-lyapunov}. The necessary computations
run along similar lines to those in the proof of Lemma~3.2 in~\cite{LoWade17},
which is a similar result in the case of \emph{finite}~$S$, under similar hypotheses.
We give the outline of the arguments, emphasizing the differences from~\cite{LoWade17}.
For ease of notation, define for $n \in \ZP$,
\begin{equation}
\nonumber
  \Delta_n := X_{n+1} - X_n, \text{ and } D_{\nu,\varphi, n} := H_{\nu,\varphi} ( \xi_{n+1} )- H_{\nu,\varphi} ( \xi_n ).
\end{equation}
In our proofs we often separate computations of expected functional increments over whether or not the increment is relatively big; for this purpose, we define the event
\begin{equation}
\label{eq:truncation-event}
E_{n,r} := \{ | \Delta_n | \leq X_n^r \}, \text{ for } r \in (0,1). 
\end{equation}
The following technical
lemma is the analogue of Lemma~3.3 of~\cite{LoWade17} and is proved similarly.

\begin{lemma}
\label{lem:truncation}
Suppose that~\eqref{ass:moments} holds for some $p>2$, $B_p \in \RP$, $q>0$,
and $x_B \in \RP$.
Then for all
$r \in (0,1)$, all
$s \in [0,p]$, and all $n \in\ZP$,
\begin{equation}
\label{eq:truncation}
\Exp \bigl[ | \Delta_n |^s \2 { E_{n,r}^\rc } \bigmid \cF_n \bigr] \leq B_p  X_n^{r(s-p)} , \text{ on } \{ X_n \geq x_B \} .
\end{equation}
Moreover, if $r \in ( \frac{1}{p-1}, 1 )$, then for $k \in \{1,2\}$,
\begin{equation}
\label{eq:moments-truncated} 
\Exp \bigl[ \Delta_n^k \2 { E_{n,r} } \bigmid \cF_n \bigr] = \Exp \bigl[ \Delta_n^k \bigmid \cF_n \bigr] + X_n^{k-2} \eps_k ( \xi_n ) ,
\text{ on }\{ X_n \geq x_B \} ,
\end{equation}
where $\lim_{x \to \infty} \sup_{u \in S} | \eps_k ( x, u) | =0$.
\end{lemma}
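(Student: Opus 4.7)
Proof plan for Lemma~\ref{lem:truncation}.

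For part~\eqref{eq:truncation}, the idea is a one‑line Chebyshev‑type truncation. On the event $E_n^\rc = \{ |\Delta_n| > X_n^r \}$, and for $s \in [0,p]$, one has the pointwise bound
\[
  |\Delta_n|^s \2{ E_n^\rc } = |\Delta_n|^s |\Delta_n|^{p-s} |\Delta_n|^{s-p} \2{ E_n^\rc } \leq |\Delta_n|^p X_n^{r(s-p)} ,
\]
since $p - s \geq 0$ and $|\Delta_n|^{s-p} \leq X_n^{r(s-p)}$ on $E_n^\rc$. Taking conditional expectation with respect to $\cF_n$ and invoking~\eqref{BCp} from~\eqref{ass:moments} then yields~\eqref{eq:truncation} on $\{ X_n \geq x_B \}$ with constant $B_p$.

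For~\eqref{eq:moments-truncated}, writing $\Exp[ \Delta_n^k \2{ E_n } \mid \cF_n ] = \Exp[ \Delta_n^k \mid \cF_n ] - \Exp[ \Delta_n^k \2{ E_n^\rc } \mid \cF_n ]$ and using the elementary inequality $|\Exp[\Delta_n^k \2{ E_n^\rc } \mid \cF_n]| \leq \Exp[ |\Delta_n|^k \2{ E_n^\rc } \mid \cF_n ]$, one applies~\eqref{eq:truncation} with $s=k$ to obtain
\[
  \bigl| \Exp[ \Delta_n^k \2{ E_n^\rc } \mid \cF_n ] \bigr| \leq B_p X_n^{r(k-p)} = X_n^{k-2} \cdot B_p X_n^{r(k-p) - (k-2)} ,
\]
on $\{X_n \geq x_B\}$. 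It suffices to verify that the exponent $r(k-p) - (k-2)$ is strictly negative for $k \in \{1,2\}$, so that $B_p X_n^{r(k-p)-(k-2)} \to 0$ as $X_n \to \infty$. For $k=2$ the exponent equals $r(2-p) < 0$ since $p>2$ and $r>0$. For $k=1$ the exponent equals $r(1-p) + 1 = 1 - r(p-1)$, which is negative precisely under the hypothesis $r > 1/(p-1)$. Setting $\eps_k (\xi_n) := X_n^{2-k}\, \Exp[\Delta_n^k \2{E_n^\rc} \mid \cF_n ]$ gives the stated form, and the bound $|\eps_k(x,u)| \leq B_p x^{r(k-p)-(k-2)}$ is uniform in~$u \in S$, delivering $\lim_{x \to \infty} \sup_{u \in S} |\eps_k(x,u)| = 0$.

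The argument is essentially routine; the only real content is the arithmetic of choosing $r$ so that the truncated tail is $o(X_n^{k-2})$ for both $k=1$ and $k=2$, which is what dictates the lower bound $r > 1/(p-1)$ in the hypothesis. There is no serious obstacle, and the role of~\eqref{ass:moments} is only to supply the uniform $p$th‑moment bound $B_p$; no further structure of the process is needed here.
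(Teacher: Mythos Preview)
Your proof is correct and is the standard argument; the paper omits the proof entirely, referring to Lemma~3.3 of~\cite{LoWade15}, where the same Chebyshev-type truncation and exponent bookkeeping is carried out. One trivial remark: your definition of $\eps_k(\xi_n)$ is off by a sign (it should be $-X_n^{2-k}\Exp[\Delta_n^k \2{E_n^\rc}\mid\cF_n]$ to match the stated identity), but this is immaterial since only $|\eps_k|$ is bounded.
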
 

On the event $E_{n,r}$ given by~\eqref{eq:truncation-event}, we have $X_{n+1} \geq X_n - X_n^r$, and so for
fixed $r \in (0,1)$ we may choose $x_1 \in \RP$ such that
$X_{n+1} > 1$ on $\{ X_n \geq x_1 \} \cap E_{n,r}$.  Hence,
by~\eqref{eq:strips-lyapunov},
\begin{align}
\label{eq:Dn-small-jump}
\Exp [ D_{\nu,\varphi, n} \2 { E_{n,r} } \mid \cF_n ] =
U_{\nu, \varphi,r} ( \xi_n) + \frac{\nu}{2} V_{\nu, \varphi,r} ( \xi_n) , \text{ on } \{ X_n \geq x_1 \},
\end{align}
where,
\begin{align*}
  U_{\nu, \varphi,r} ( \xi_n) &:=
   \Exp \left[ \left( X_{n+1}^\nu - X_n^\nu \right) \2 {E_{n,r}} \mid \cF_n \right] ; \\
  V_{\nu, \varphi,r} ( \xi_n) &:=
  \Exp \left[ \left( X_{n+1}^{\nu-2} \varphi (\eta_{n+1})  - X_n^{\nu-2} \varphi (\eta_{n}) \right) \2 {E_{n,r}} \mid \cF_n \right].  
\end{align*}
The next two results give asymptotics for $U_{\nu, \varphi,r}$ and $V_{\nu, \varphi,r}$.

\begin{lemma}
\label{lem:U-estimate}
Suppose that~\eqref{ass:moments} holds for some $p>2$ and $q>0$. Suppose also
that~\eqref{ass:strict-lamperti} holds, and that $r \in ( \frac{1}{p-1}, 1 )$.
Then for any $\nu \in \R$,
\begin{equation}
\nonumber
  U_{\nu, \varphi,r} ( x, u) = \frac{\nu}{2} x^{\nu -2} \left[ 2 e_u - (1-\nu) \sigma_u^2 + \eps_{x,u} \right] ,
\end{equation}
where $\lim_{x \to \infty} \sup_{u \in S} | \eps_{x,u} | =0$.
\end{lemma}
\begin{proof}
The proof is 
similar to that of Lemma~3.4 in~\cite{LoWade17}. On the event $E_{n,r}$, we can apply Taylor's
theorem to get
\begin{equation}
\nonumber
  \left( X_{n+1}^\nu - X_n^\nu \right) \2 {E_{n,r}} = \nu X_n^{\nu-1} \Delta_n \2 {E_{n,r}} + \frac{\nu (\nu-1)}{2}  X_n^{\nu-2} \Delta_n^2 \2 {E_{n,r}} + \omega_n ,
\end{equation}
where $|\omega_n| \leq C X_n^{\nu-3} | \Delta_n |^3 \2 {E_{n,r}} \leq C | \Delta_n |^2 X_n^{\nu+r-3}$. Using~\eqref{ass:moments}
and~\eqref{eq:moments-truncated}, we get
\begin{equation}
\nonumber
  U_{\nu, \varphi,r} (x, u )
  = \nu x^{\nu-1} \mu_1 (x,u) + \frac{\nu (\nu-1)}{2} x^{\nu-2} \mu_2 (x,u) + x^{\nu-2} \eps_{x,u} ,
\end{equation}
where $\lim_{x \to \infty} \sup_{u \in S} | \eps_{x,u} | =0$. The result now follows from~\eqref{ass:strict-lamperti}.
\end{proof}

\begin{lemma}
\label{lem:V-estimate}
Suppose that~\eqref{ass:asymptotically-markov} holds and that~\eqref{ass:moments} holds for some $p>2$ and $q>0$. 
Then for any $r \in ( 0, 1 )$ and any $\nu \in \R$,
\[
 V_{\nu, \varphi,r} ( x, u) =   x^{\nu -2} \left[ \int_S \left( \varphi(v) - \varphi(u) \right) \cK (u, \ud v) + \eps_{x,u} \right] ,\]
where $\lim_{x \to \infty} \sup_{u \in S} | \eps_{x,u} | =0$.
\end{lemma}
\begin{proof}
Similarly to the proof of Lemma~3.5 of~\cite{LoWade17}, first note that
\[
  \Exp \left[ \left| \left( X_{n+1}^{\nu-2} - X_n^{\nu-2} \right) \varphi (\eta_{n+1})  \right| \2 {E_{n,r}} \mid \cF_n \right]
  \leq \sup_{u\in S} | \varphi (u) | \Exp \left[ \left| X_{n+1}^{\nu-2} - X_n^{\nu-2} \right|
    \2 {E_{n,r}} \mid \cF_n \right] ,
\]
where $|  X_{n+1}^{\nu-2} - X_n^{\nu-2}  |  \2 {E_{n,r}}$ is bounded by a constant times $X_n^{\nu+r-3}$.
On the other hand,
\begin{align*} 
\Exp \left[  \left( \varphi (\eta_{n+1})  - \varphi (\eta_n) \right)  \2 {E_{n,r}} \mid \cF_n \right]
& = \Exp \left[   \varphi (\eta_{n+1})  - \varphi (\eta_n)  \mid \cF_n \right] + \eps (\xi_n) , \as,
\end{align*}
where $\lim_{x \to \infty} \sup_{u \in S} | \eps (x,u) | =0$, using
the fact that $\varphi$ is uniformly bounded and
$\Pr ( E^\rc_{n,r} \mid \cF_n ) \leq B_p X^{-rp}_n$, by the $s=0$ case
of~\eqref{eq:truncation}.  Here, by~\eqref{eq:pseudo-kernel},
\[
  \Exp \left[   \varphi (\eta_{n+1})  - \varphi (\eta_n)  \mid \cF_n \right] = \int_S \left( \varphi (v) - \varphi (\eta_n) \right) \Kss (X_n, \eta_n, \ud v), \as
\]
The result now follows if we note that
\begin{align*}
   V_{\nu, \varphi,r} ( \xi_n) &=
 \Exp \left[ \left( X_{n+1}^{\nu-2}  - X_n^{\nu-2}\right) \varphi (\eta_{n+1})  \2 {E_{n,r}} \mid \cF_n \right] \\
& \qquad + \Exp \left[   X_{n}^{\nu-2} \left(\varphi (\eta_{n+1})
      - \varphi (\eta_{n}) \right)  \2 {E_{n,r}} \mid \cF_n \right],
\end{align*}
and combine~\eqref{eq:asymptotically-markov} with the preceding estimates.
\end{proof}

The following result provides a bound when the increment is large.

\begin{lemma}
\label{lem:big-jump}
Suppose that~\eqref{ass:moments} holds for some $p>2$ and $q>0$. Then for any $\nu  \in (2-p, p \wedge q]$, there exists $r \in (\frac{1}{p-1},1)$ for which
\[
 \Exp [ | D_{\nu,\varphi, n} | \2 { E^\rc_{n,r} } \mid \cF_n ] = X_n^{\nu-2} \eps (\xi_n) , \as, \]
where $\lim_{x \to \infty} \sup_{u \in S} | \eps (x,u) | = 0$.
\end{lemma}
\begin{proof}
  The proof follows exactly that of Lemma~3.7 of~\cite{LoWade17},
  using the truncation estimates from Lemma~\ref{lem:truncation} and
  bounds for $H_{\nu,\varphi}(x,u)$ in terms of $x$; this relies on
  the fact that~$\varphi$ is uniformly bounded.
\end{proof}

\begin{proof}[Proof of Proposition~\ref{prop:strips-lyapunov}.]
  First note that~\eqref{ass:moments} shows that
  $\Exp H_{\nu,\varphi} (\xi_n) < \infty$ provided $\nu \leq p \wedge q$.
  The statement follows from combining~\eqref{eq:Dn-small-jump}
  with the estimates from Lemmas~\ref{lem:U-estimate},
  \ref{lem:V-estimate} and~\ref{lem:big-jump}, on choosing a suitable
  $r \in ( \frac{1}{p-1} , 1)$.
\end{proof}

We turn to the proof of Theorem~\ref{thm:strict-lamperti-moments}.
We will need
  the following two results that give conditions for existence and non-existence of moments of passage times. The formulations, taken from~\cite[\S 2.7]{MenPopWad16},
  are based closely on results of~\cite{aim}, and apply to an $\RP$-valued, adapted process $Y_n$ and its passage times $\lambda_y : = \min\{n \in \ZP \colon Y_n \leq y\}$, $y \in \RP$.
  
\begin{lemma}[Corollary 2.7.3 in~\cite{MenPopWad16}]
\label{lem:borrow-aim-finite}
  Let $Y_n$ be an integrable $\cF_n$-adapted stochastic process, taking values
  in an unbounded subset of $\RP$, with $Y_0 = y_0$ fixed. Suppose that there
  exist constants $\delta \in (0,\infty)$, $y \in (0,\infty)$, and $a < 1$ such that for any $n\in\ZP$, 
  \begin{equation}
    \label{eq:existence-of-moments-drift}
    \Exp [Y_{n+1}-Y_n \mid \cF_n ]\leq -\delta
    Y_n^a, \text{ on } \{n<\lambda_y\}.
  \end{equation}
  Then $\Exp[\lambda^s_y]<\infty$ for
  any $s \in [0, (1-a)^{-1})$.
\end{lemma}   

\begin{lemma}[Theorem 2.7.4 in~\cite{MenPopWad16}]
\label{lem:borrow-aim-infinite}
Let $Y_n$ be an integrable $\cF_n$-adapted stochastic process, taking values
  in an unbounded subset of $\RP$, with $Y_0 = y_0$ fixed. 
  Suppose that there
  exist constants $y \in (0,\infty)$, $B \in \RP$ and $c \in \R$, such that for any $n\in\ZP$, 
  \begin{align}
    \label{eq:non-existence-drift}
      \Exp [Y_{n+1}-Y_n \mid \cF_n ] & \geq -\frac{c}{Y_n},  \text{ on } \{ Y_n > y \},\\
      \Exp [ (Y_{n+1}-Y_n)^2 \mid \cF_n ] & \leq B, \text{ on } \{Y_n> y\}.
    \label{eq:non-existence-variance}
  \end{align}
  Suppose, in addition, that for some $s_0>0$, the process $Y^{2
    s_0}_{n\wedge \lambda_y}$ is a submartingale. Then,
    for any $s>s_0$, 
  $\Exp[\lambda^s_y ] = \infty$ provided $y_0 > y$.
\end{lemma}

\begin{proof}[Proof of Theorem~\ref{thm:strict-lamperti-moments}.]
The proof is
  divided into two parts; we first establish
  existence of moments (Theorem~\ref{thm:strict-lamperti-moments}\ref{thm:strict-lamperti-moments-a})
and then non-existence of moments  (Theorem~\ref{thm:strict-lamperti-moments}\ref{thm:strict-lamperti-moments-b}). Recall the definition of $\delta_\theta$ from~\eqref{eq:deltadef},
that  $g_\theta (u)  = 2e_{u}  + (2\theta -1)\sigma^2_{u}
  -\delta_{\theta}$, and that  $\varphi_{\theta} \in \Cb^+ (S)$ satisfies~\eqref{eq:choice-of-psi-moments}.
 We will use the process $H_{\nu, \varphi_{\theta}}(\xi_n)$, $\nu >0$,
defined via~\eqref{eq:strips-lyapunov}, 
in slightly different ways in the proofs of each of the two parts of the theorem.

\paragraph*{Proof of Theorem~\ref{thm:strict-lamperti-moments}\ref{thm:strict-lamperti-moments-a}}
Take  $\nu:=  2\theta \wedge p \wedge q$, where $\theta >0$, $p >2$, and $q \geq 2$ are as in the hypotheses
of Theorem~\ref{thm:strict-lamperti-moments}\ref{thm:strict-lamperti-moments-a}.
Set $Y_n := H_{\nu, \varphi_{\theta}}(X_n,\alpha_n) \in \RP$. 
Then, by choice of $\nu$ and $\varphi_{\theta}$ for which~\eqref{eq:choice-of-psi-moments} holds,
the coefficient in~$W_{\nu,\varphi_\theta}$ given by~\eqref{eq:strips-lyapunov-increment}
satisfies
\begin{align}
\label{eq:W-delta}
 2e_{u}  - (1-\nu) \sigma^2_{u} + \int_S (\varphi_{\theta} (v) - \varphi_{\theta} (u) ) \cK ( u, \ud v )
& =  2e_{u}  - (1-\nu) \sigma^2_{u} - g_\theta (u) \nonumber\\
& =    \delta_\theta  +    (\nu - 2\theta  ) \sigma^2_{u} \leq  \delta_\theta ,
\end{align}
since~$\nu \leq 2 \theta$
and $\sigma^2_u \geq 0$. 
By hypothesis, $\delta_\theta <0$, and, by~\eqref{eq:strips-lyapunov-increment}, there is an $x_1 \in \RP$ for which 
\[ \sup_{u \in S} W_{\nu,\varphi_\theta} (x, u) \leq   - \frac{\nu}{4} | \delta_\theta | x^{\nu-2}, \text{ for all } x \geq x_1.  \]
Since, by~\eqref{eq:strips-lyapunov},
$\sup_{u \in S} | H_{\nu, \varphi_{\theta}}(x,u) - x^\nu| = O (x^{\nu -2})$, it follows that
 there is an $x_2 \in \RP$ for which, setting  $a := \frac{\nu -2}{\nu} < 1$,
\[  W_{\nu,\varphi_\theta} (x, u) \leq   - \frac{\nu}{4} | \delta_\theta | \left( H_{\nu, \varphi_{\theta}}(x,u) \right)^{a}, \text{ for all } x \geq x_2 \text{ and all } u \in S.  \]
This, together with Proposition~\ref{prop:strips-lyapunov}, shows that~\eqref{eq:existence-of-moments-drift} holds with $Y_n = H_{\nu, \varphi_{\theta}}(X_n,\alpha_n)$, $a = \frac{\nu -2}{\nu}$, and $\delta = \frac{\nu}{4} | \delta_\theta | > 0$.
 Note that by the choice
  of $a$
  we have $(1 - a)^{-1}= \frac{\nu}{2} = \theta \wedge \frac{p}{2} \wedge \frac{q}{2}$.
  By Lemma~\ref{lem:borrow-aim-finite} we conclude that
  $\Exp[\lambda_y^s]< \infty$ for all
  $s \in [0, \theta \wedge \frac{p}{2}  \wedge \frac{q}{2})$ and all $y$ sufficiently large. 
  Moreover, by~\eqref{eq:strips-lyapunov} and the choices of $\varphi_\theta \in \Cb^+(S)$ and $\nu >0$, we have that
  $X_n^\nu \leq Y_n$, a.s. It follows that, with $\tau$ as defined at~\eqref{eq:def-passage-time}
  $\tau_{y^{1/\nu}} \leq \lambda_y$ and therefore
  $\Exp[\tau_r^s]< \infty$ for all $r$ large enough, which completes
  the proof of  Theorem~\ref{thm:strict-lamperti-moments}\ref{thm:strict-lamperti-moments-a}.

\paragraph*{Proof of Theorem~\ref{thm:strict-lamperti-moments}\ref{thm:strict-lamperti-moments-b}}
Take  $\nu:=  2\theta$, where $0< \theta < \frac{p}{2} \wedge \frac{q}{2}$ as in the hypotheses
of Theorem~\ref{thm:strict-lamperti-moments}\ref{thm:strict-lamperti-moments-b}.
Now define  $Y_n : =  (H_{\nu,\varphi_\theta}(\xi_n ))^{1/\nu}$,
where $\varphi_{\theta} \in \Cb^0(S)$ again satisfies~\eqref{eq:choice-of-psi-moments}.

We verify the hypotheses of Lemma~\ref{lem:borrow-aim-infinite} for this choice of $Y_n$; we examine the increment $Y_{n+1} - Y_n$.
First, observe that Taylor's theorem and the $x>1$ case of~\eqref{eq:strips-lyapunov} gives
  \begin{align}
    \label{eq:H-nu-H-1}
     (H_{\nu,\varphi}(x,u) )^{1/\nu} = x\Big( 1 +
    \frac{\nu}{2}\varphi(u) x^{-2} \Big)^{1/\nu}
   & = x + \frac{\varphi(u)}{2x} + O(x^{-3}), \nonumber\\
    & = H_{1,\varphi} (x,u) + O(x^{-3}),
  \end{align}
  as $x \to \infty$, uniformly in~$u \in S$. We claim that for every $\varphi \in \Cb^+(S)$, there exists $C = C(\varphi) \in (1, \infty)$ such that
\begin{equation}
\label{eq:H-scaled-bound} 
x \leq (H_{\nu,\varphi}(x,u) )^{1/\nu} \leq x + C, \text{ for all } x \in \RP , \, u \in S .\end{equation}
For $x \leq 1$, the bounds in~\eqref{eq:H-scaled-bound} are immediate from~\eqref{eq:strips-lyapunov}, while for $x>1$, they follow from~\eqref{eq:H-nu-H-1} and the fact that $\nu\varphi(u) \geq 0$ for all $u \in S$.
It follows from~\eqref{eq:H-scaled-bound}  that
\begin{equation}
\label{eq:Y-bound} | Y_{n+1}-Y_n | \leq | \Delta_n | + C , \as \end{equation}
Thus we verify~\eqref{eq:non-existence-variance} as a consequence of~\eqref{eq:Y-bound}
and~\eqref{ass:moments}. 

We next claim that
\begin{equation}
\label{eq:Y-drift}
\Exp [ Y_{n+1}-Y_n \mid \cF_n ] = \tW (\xi_n) ,
\text{ where }
 \tW (x,u)   =  \frac{1}{2x} \left[ \delta_\theta - (2\theta -1) \sigma_u^2 +  \eps_{x,u} \right]      ,
\end{equation}
and, as usual, $\lim_{x \to \infty} \sup_{u \in S} | \eps_{x,u} | =0$. 
We verify~\eqref{eq:Y-drift}. 
Choose $r \in (0,1)$ with
$r (p-1) > 1$ (recall that $p>2$). Then, 
\begin{equation}
    \label{eq:Y-split}
    \Exp [ Y_{n+1}-Y_n \mid \cF_n ]  
 =  \Exp [ ( Y_{n+1}-Y_n ) \2{E_{n,r}}  \mid \cF_n ]
  + \Exp [ ( Y_{n+1}-Y_n ) \2{E^\rc_{n,r}}  \mid \cF_n ],
  \end{equation}
  where $E_{n,r}$ is defined at~\eqref{eq:truncation-event}. 
A consequence
of~\eqref{eq:H-nu-H-1} is that
\[  ( Y_{n+1}-Y_n ) \2{E_{n,r}} =  \left( H_{1,\varphi}(\xi_{n+1}) - H_{1,\varphi}(\xi_{n}) \right)  \2{E_{n,r}} + O (x^{-3} ), \text{ on } \{ X_n \geq x \} ,
\]
uniformly over~$\eta_n \in S$.
Together with the $\nu =1$ cases of Proposition~\ref{prop:strips-lyapunov} and Lemma~\ref{lem:big-jump}, this implies that $\Exp [ ( Y_{n+1}-Y_n ) \2{E_{n,r}} \mid \cF_n ] = \tW (\xi_n)$,
where $\tW$ differs from~$W_{1,\varphi_\theta}$ 
given by~\eqref{eq:strips-lyapunov-increment} 
only in the $\eps_{x,u}$ term.
On the other hand, we have from~\eqref{eq:Y-bound} 
and an application of the $s =1$ and $s=0$ cases of Lemma~\ref{lem:truncation} that,
\[ \left| \Exp [ ( Y_{n+1}-Y_n ) \2{E^\rc_{n,r}}  \mid \cF_n ] \right| \leq  \Exp [ | \Delta_n | \2{E^\rc_{n,r}}  \mid \cF_n ] + C \Pr ( E^\rc_{n,r} \mid \cF_n ) \leq X_n^{-1} \eps (\xi_n),  \]
where $\lim_{x \to \infty} \sup_{u \in S} | \eps_{x,u} | =0$,
where we have used the fact that $r (p-1) > 1$. Applying~\eqref{eq:Y-split}
verifies~\eqref{eq:Y-drift},
appropriately redefining $\tW$ (the $\eps_{x,u}$ absorbs the additional error from the contribution on $E^\rc_{n,r}$). Thus~\eqref{eq:Y-drift} is proved. It follows that~\eqref{eq:non-existence-drift} holds
for this choice of $Y_n$, recalling from~\eqref{eq:H-scaled-bound} that $X_n \leq Y_n \leq X_n + C$, a.s.
Moreover, 
since $Y_n^{\nu} = H_{\nu,\varphi_\theta} (\xi_n)$ and $\nu = 2\theta < p \wedge q$, we have from Proposition~\ref{prop:strips-lyapunov}
that
$ \Exp [ Y_{n+1}^{\nu} - Y_n \mid \cF_n ] = W_{\nu,\varphi_\theta} (\xi_n) $, a.s.,
where, since $\nu = 2\theta$, by~\eqref{eq:strips-lyapunov-increment} and an analogous calculation to~\eqref{eq:W-delta},
\[ W_{\nu,\varphi_\theta} (x,u) =  \theta x^{2\theta -2} \left[ \delta_\theta + \eps_{x,u} \right] ,\]
and, under the hypotheses of Theorem~\ref{thm:strict-lamperti-moments}\ref{thm:strict-lamperti-moments-b},
we have $\delta_\theta >0$. This verifies that $Y_{n \wedge \lambda_y}^{2\theta}$ is a submartingale
for a sufficiently large~$y \in \RP$.
We have thus shown that $Y_n$ satisfies all the hypotheses of Lemma~\ref{lem:borrow-aim-infinite}
with $s_0 = \theta$, and this establishes the conclusion of Theorem~\ref{thm:strict-lamperti-moments}\ref{thm:strict-lamperti-moments-b},
recalling once more that $X_n \leq Y_n \leq X_n + C$.
\end{proof}

\subsection{The Lamperti regime} 
\label{sec:general-lamperti-proofs}

The aim of this section is to prove Theorems~\ref{thm:strips-general-lamperti} and~\ref{thm:moments-general}.
We do so by mapping the process back to the \emph{strict} Lamperti regime,
identifying the appropriate parameters, and verifying the conditions of, respectively, Theorems~\ref{thm:strips-strict-lamperti}
and~\ref{thm:strict-lamperti-moments}
for the transformed process.

Our transformation will be achieved by a collection of horizontal
shifts that eliminate the constant-order terms of the drifts,
following a similar idea to that in Section~5 of~\cite{LoWade17} for
the simpler case when~$S$ is finite.  For a given $\phi \in \Cb^+(S)$,
the transformation is
\begin{equation}\label{Hshift}
T_\phi : \Sigma \to \Sigma, \text{ given by } 
T_\phi(x,u ) =   (x + \phi(u),u) \text{ for all } (x,u) \in \Sigma.
\end{equation}

\begin{theorem}
\label{thm:general-to-strict}
Suppose that~\eqref{ass:non-confinement}, \eqref{ass:kernel},
and~\eqref{ass:asymptotically-markov+} hold, and
that~\eqref{ass:moments} holds with~$p>2$ and $q>0$. Suppose also
that~\eqref{ass:lamperti} and~\eqref{ass:detail-drifts} hold. There
exists a unique $\psi \in \Cb^+ (S)$ with
$\inf_{u \in S} \psi (u) = 0$ and
$\int_S ( \psi (u) - \psi (v) ) \cK ( u, \ud v ) = d_u$ for all
$u \in S$.  Then the time-homogeneous Markov process
$\txi = ( \txi_n, n \in \ZP)$ defined by
$\txi_n := T_\psi ( \xi_n ) \in \Sigma$ satisfies the hypotheses of
Theorem~\ref{thm:strips-strict-lamperti}. In particular, the moments
conditions in~\eqref{eq:generalized-lamperti-regime} are satisfied for
$\txi$ with coefficients $\te, \tsigma \in \Cb(S)$ given by
\begin{align}
\label{eq:new-e}
\te_u &= e_u + \int_S \left( \psi(v)-\psi(u) \right) \meas_u ( \ud v ); \\
\label{eq:new-sigma}
\tsigma_u &= \sigma^2_u + 2  \int_S  \lambda_u (v) \psi (v)  \cK ( u , \ud v ) + \int_S \left( \psi^2 (v) - \psi^2 (u) \right) \cK ( u , \ud v ).
\end{align}
\end{theorem}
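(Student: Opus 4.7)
My plan is to construct $\psi$ using the operator Fredholm alternative, then verify the hypotheses of Theorem~\ref{thm:strips-strict-lamperti} for the shifted process $\tilde\xi = T_\psi(\xi)$ by direct computation of the first two increment moments. For the construction, I would recast the identity $\int_S (\psi(u) - \psi(v)) \cK(u, \ud v) = d_u$ as the linear equation $(I - T_\cK)\psi = d$ in $\Cb(S)$, where $T_\cK$ is the Markov operator from Remark~\ref{rmks:psi}(a). Under~\eqref{ass:kernel}, $\ker(I - T_\cK)$ reduces to the constants (since $\pii$ is the unique invariant probability); the Lamperti condition~\eqref{eq:lamperti-condition} places $d$ in the corresponding complement $\Cb^0(S)$ in the Fredholm sense of Proposition~\ref{prop:fredholm}. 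Hence there is a solution $\varphi \in \Cb(S)$, unique modulo additive constants, and normalizing via $\inf \varphi = 0$ produces the claimed $\psi \in \Cb^+(S)$.

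Because $T_\psi$ shifts the $X$-coordinate by the bounded quantity $\psi(\eta_n) \geq 0$ without altering $\eta_n$, the process $\tilde\xi$ takes values in $\Sigma$ and is Markov on $\Sigma$ with the same $\eta$-limit kernel $\cK$ and invariant law $\pii$. Boundedness of $\psi$ transfers~\eqref{ass:non-confinement} and~\eqref{ass:moments} verbatim, while~\eqref{ass:asymptotically-markov} for $\tilde\xi$ follows from~\eqref{ass:asymptotically-markov+} for $\xi$ after the shift $x = \tilde x - \psi(u)$. To verify the strict Lamperti condition~\eqref{ass:strict-lamperti}, I would write
\[
\tilde\mu_1(\tilde x, u) = \mu_1(x, u) + \int_S (\psi(v) - \psi(u)) \Kss(x, u, \ud v)
\]
and insert the expansions from~\eqref{ass:lamperti} and~\eqref{ass:asymptotically-markov+}. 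The leading $O(1)$ contribution is $d_u + \int_S(\psi(v) - \psi(u))\cK(u, \ud v) = 0$ by the defining equation for $\psi$; the $1/x$ contribution matches $\te_u$ from~\eqref{eq:new-e} (using $\meas_u(S) = 0$ to identify $\int_S \psi(v) \meas_u(\ud v)$ with $\int_S(\psi(v) - \psi(u))\meas_u(\ud v)$). Since $\psi$ is bounded, $1/x - 1/\tilde x = O(1/\tilde x^2)$, so the expansion in $\tilde x$ has no constant term.

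For $\tilde\mu_2$, I would expand $(\tilde\Delta_n)^2$ into $\Delta_n^2$, a cross term, and a pure-$\psi$ term. The first gives $\sigma^2_u + o(1)$. The cross term is handled using the disintegration~\eqref{eq:mu-disintegration}, which identifies the conditional drift given $\eta_{n+1} = v$ as $\cmu_1(x, u, v)$; by~\eqref{ass:detail-drifts} and~\eqref{ass:asymptotically-markov+}, this term limits to $2 \int_S \lambda_u(v)(\psi(v) - \psi(u))\cK(u, \ud v)$, which equals $2 \int_S \lambda_u(v) \psi(v) \cK(u, \ud v) - 2 \psi(u) d_u$, because $\int_S \lambda_u(v) \cK(u, \ud v) = \lim_{x \to \infty} \mu_1(x, u) = d_u$ by combining the disintegration with~\eqref{ass:lamperti}. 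The pure-$\psi$ term limits to $\int_S(\psi(v) - \psi(u))^2 \cK(u, \ud v) = \int_S (\psi^2(v) - \psi^2(u)) \cK(u, \ud v) + 2 \psi(u) d_u$, where the extra $2\psi(u) d_u$ is produced once more by the defining equation for $\psi$. Summing, the two $2\psi(u) d_u$ pieces cancel, producing $\tsigma_u$ exactly as in~\eqref{eq:new-sigma}; continuity of $\te, \tsigma$ in $u$ follows from the assumed continuity of $e, \sigma^2, \psi, \meas, \lambda,$ and $\cK$. The main obstacle is this second-moment calculation: the disintegration must be applied carefully and the identity $\int_S \lambda_u(v)\cK(u,\ud v) = d_u$ deployed to engineer the cancellation of the $d_u$-dependent pieces, which is precisely where the stronger hypothesis~\eqref{ass:detail-drifts} (rather than only~\eqref{ass:lamperti}) is indispensable.
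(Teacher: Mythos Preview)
Your proposal is correct and follows essentially the same route as the paper: construct $\psi$ via the Fredholm alternative (Proposition~\ref{prop:fredholm}), transfer the structural hypotheses to $\txi$ using boundedness of $\psi$, and verify~\eqref{ass:strict-lamperti} by expanding $\tmu_1$ and $\tmu_2$ through the shift $x = \tilde x - \psi(u)$. The paper handles the cross term in $\tmu_2$ by writing it as $2\int_S \psi(v)\cmu_1(\xi_n,v)\Kss(\xi_n,\ud v) - 2\psi(\eta_n)\mu_1(\xi_n)$ and then combining the resulting $-2\psi(u)d_u$ with the expansion of $\int_S(\psi(v)-\psi(u))^2\cK(u,\ud v)$ via the Poisson equation, which is exactly the cancellation you engineer; your explicit invocation of $\int_S \lambda_u(v)\cK(u,\ud v) = d_u$ is the same identity the paper records (in the proof of Theorem~\ref{thm:strips-general-lamperti}).
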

\begin{proof} 
By Proposition~\ref{prop:fredholm}, there exists a unique $\psi \in \Cb^+ (S)$ with $\inf_{u \in S} \psi (u) = 0$ and
\begin{equation}\label{eq:psid}
\int_S ( \psi (u) - \psi (v) ) \cK ( u, \ud v )   =  d_u , \text{ for all } u \in S.
\end{equation}
Note then that $\te, \tsigma$ defined by~\eqref{eq:new-e} and~\eqref{eq:new-sigma}
are continuous, as claimed.
This follows from Lemma~\ref{lem:continuity-against-K} using the 
continuity of $\psi$, $e$, $\sigma^2$,  of $u \mapsto \meas_u$
from~\eqref{ass:asymptotically-markov+},
of $u \mapsto \lambda_u$
from~\eqref{ass:detail-drifts}, and the continuity of $\cK$ from~\eqref{ass:kernel}. 

Since $T_\psi$ is one-to-one and measurable (continuous, even), $\txi$ is time-homogeneous and Markov, and the non-confinement for $\txi$
is inherited from non-confinement~\eqref{ass:non-confinement} for $\xi$, since $\psi$ is bounded. Similarly the moments bound~\eqref{ass:moments} 
carries over easily. 
It remains to verify that~\eqref{eq:generalized-lamperti-regime} holds with the claimed coefficients.

Write $\txi_n = (\tX_n , \teta_n)$ in components.
Then, on $\{ X_n \geq x_B \}$,
\begin{align*}
\Exp [ \tX_{n+1} - \tX_n \mid \cF_n ]
= \Exp [ X_{n+1} - X_n \mid \cF_n ] + \Exp  [ \psi (\eta_{n+1} ) - \psi (\eta_n ) \mid \cF_n ] = \tmu_1 ( \txi_n ) , \as, \end{align*}
for a measurable $\tmu_1 : \Sigma \to \R$, 
where, with $\Kss$ as defined at~\eqref{eq:pseudo-kernel}, 
\[
  \tmu_1 (x, u) = \mu_1 ( x - \psi (u) ,u ) + \int_S \left( \psi (v) - \psi(u) \right) \Kss ( x - \psi (u) , u , \ud v ) .
\]
Since $\psi$ is bounded,~\eqref{ass:asymptotically-markov+} and~\eqref{eq:generalized-lamperti-regime} show that, for some $\eps_{x,u}$ with $\lim_{x \to \infty} \sup_{u \in S} | \eps_{x,u} | = 0$,
\begin{align*}
 \tmu_1 (x, u) & = d_u + \frac{e_u}{x} + \int_S \left( \psi (v) - \psi(u) \right) \left[  \cK (u, \ud v ) + \frac{\meas_u ( \ud v)}{x} \right]
+ \frac{\eps_{x,u}}{x}  \\
& = \frac{1}{x} \left[ e_u +  \int_S \left( \psi (v) - \psi(u) \right) \meas_u ( \ud v) + \eps_{x,u} \right] ,\end{align*} 
where the second equality follows from~\eqref{eq:psid}.
Similarly, $\Exp [ ( \tX_{n+1} - \tX_n )^2 \mid \cF_n ] = \tmu_2 (\txi_n)$, on $\{ X_n \geq x_B\}$.
Observe that 
\begin{align*}
\Exp [ ( \tX_{n+1} - \tX_n )^2 \mid \cF_n ]
&= \mu_2 ( \xi_n ) + 
2 \Exp [ ( X_{n+1} - X_n ) ( \psi (\eta_{n+1} ) - \psi (\eta_n ) ) \mid \cF_n ] \\
& {} \qquad {} + \Exp  [ ( \psi (\eta_{n+1} ) - \psi (\eta_n ) )^2 \mid \cF_n ]   .\end{align*}
By disintegration~\cite[Thm.~6.4, p.~108]{kall}, for measurable $f : \Sigma \to \R$,
\[
  \Exp [ f ( X_{n+1} , \eta_{n+1} ) \mid \cF_n  ] = \int_S \int_{\RP} f ( y , v ) \Kss ( \xi_n , \ud v ) \cL
  (\xi_n , v ,  \ud y ) , \text{ a.s.},
\]
where $\cL ( x, u , v , \, \cdot \, )$ is a regular conditional distribution for $X_{n+1}$ given $(\xi_n, \eta_{n+1} ) = (x,u,v)$.
In this notation, the conditional drifts appearing in~\eqref{eq:mu-disintegration} are given by 
\[
 \cmu_1 (x, u , v ) = \int_{\RP}  (y - x) \cL  ( x, u , v,  \ud y ) . \]
Hence, a.s.,
\begin{align*} \Exp [ ( X_{n+1} - X_n ) ( \psi (\eta_{n+1} ) - \psi (\eta_n ) ) \mid \cF_n ] & = 
\int_S \psi (v) \cmu_1 ( \xi_n , v ) \Kss ( \xi_n , \ud v ) - \psi( \eta_n )  \mu_1 ( \xi_n ) .\end{align*}
It follows that 
\begin{align}
\label{eq:tmu2}
\tmu_2 ( x,u)  & = \mu_2 ( x -\psi (u), u) 
+ 2 \int_S \psi (v) \cmu_1 ( x - \psi (u) , u , v ) \Kss ( x - \psi (u), u , \ud v ) 
\nonumber\\
& {} \qquad {} - 2\psi( u )  \mu_1 ( x - \psi (u), u )
+ \int_S   ( \psi (v) - \psi (u) )^2  \Kss ( x - \psi (u), u , \ud v )   . \end{align}
Note that
\begin{align*}
& {} \quad {} \left|  \int_S \psi (v) \cmu_1 ( x   , u , v ) \Kss ( x , u , \ud v ) 
- \int_S \psi (v)  \lambda_u (v) \cK ( u , \ud v )  \right| \\
& = \left| \int_S \psi (v)  \left( \cmu_1 ( x   , u , v ) - \lambda_u (v)  \right)  \Kss ( x , u , \ud v ) 
+ \int_S \psi (v)   \lambda_u (v) \left[ \Kss ( x , u , \ud v )  -  \cK ( u , \ud v ) \right] \right| \\
& \leq \| \psi \| \sup_{u, v \in S} \left| \cmu_1 ( x   , u , v )  - \lambda_u (v) \right|   
+  \| \psi \| \sup_{u,v \in S} | \lambda_u  (v) | \sup_{u \in S} \left\| \Kss ( x , u , \, \cdot \,  )  -  \cK ( u , \, \cdot \, ) \right\|_\tv 
  .\end{align*} 
Since $u \mapsto \sup_{v \in S} \lambda_u (v)$
is continuous, by~\eqref{ass:detail-drifts},   compactness shows that $\sup_{u,v \in S} | \lambda_u  (v) |< \infty$.
Hence from~\eqref{ass:detail-drifts} and~\eqref{eq:asymptotically-markov} we conclude that 
\begin{equation}
  \label{eq:detail-drifts-estimate}
  \lim_{x \to \infty} \sup_{u \in S} \left|  \int_S \psi (v) \cmu_1 ( x   , u , v ) \Kss ( x , u , \ud v ) 
  - \int_S \psi (v)  \lambda_u (v) \cK ( u , \ud v )  \right| = 0.
\end{equation}
It follows from~\eqref{eq:tmu2} with~\eqref{eq:detail-drifts-estimate} and~\eqref{eq:generalized-lamperti-regime}
that, for $\eps_{x,u}$ with $\lim_{x \to \infty} \sup_{u \in S} | \eps_{x,u} | = 0$, 
\begin{align*}
\tmu_2 ( x,u)  = \sigma^2_u  - 2 d_u \psi (u) + 2 \int_S     \psi (v)  \lambda_u (v) \cK ( u , \ud v )  
  + \int_S   ( \psi (v) - \psi (u) )^2  \cK ( u , \ud v )  + \eps_{x,u} .\end{align*}
The final observation is that
\begin{align*}
&  \int_S   ( \psi (v) - \psi (u) )^2  \cK ( u , \ud v ) -2 d_u    \psi (u)  \\
& {} \quad {}	= \int_S   ( \psi^2 (v) - \psi^2 (u) )  \cK ( u , \ud v )
 - 2 \psi (u) \left[ \int_S \left( \psi (v) -\psi (u) \right)  \cK ( u , \ud v ) + d_u \right] ,
 \end{align*}
and the term in square brackets vanishes, again by~\eqref{eq:psid}.
\end{proof}
  
\begin{proof}[Proof of Theorem~\ref{thm:strips-general-lamperti}.]
Under the conditions of  Theorem~\ref{thm:strips-general-lamperti}, 
 Theorem~\ref{thm:general-to-strict} shows that
 the transformed process $\txi_n$ defined therein satisfies the hypotheses of 
 Theorem~\ref{thm:strips-strict-lamperti}, with coefficients given by~\eqref{eq:new-e} and~\eqref{eq:new-sigma};
 note that $\xi_n$ is (positive, null) recurrent if and only if $\txi_n$ is (positive, null) recurrent.
 To obtain the expression for $\tdelta_\theta$ in~\eqref{eq:deltadef-general},
we note that, by stationarity of~$\pii$,
\begin{equation}\label{eq:general-simplifications}
  \int_S \int_S   ( \psi^2 (v) - \psi^2 (u) )  \cK ( u , \ud v ) \pii (\ud u ) = \int_S \psi^2 (v) \pii (\ud v) - \int_S \psi^2 (u) \pii (\ud u ) = 0,
\end{equation}
and, since $\meas_u (S) = 0$,
\begin{equation}
\nonumber
  \int_S \int_S \psi (u) \meas_u (\ud v) \pii (\ud u ) =  0 .
\end{equation}
In particular, we have from~\eqref{eq:general-simplifications} that  $\tsigma_u$~given by~\eqref{eq:new-sigma} satisfies
\[ 0 \leq \int_S  \tsigma_u \pii (\ud u ) = \int_S  \sigma^2_u \pii (\ud u )  + 2 \int_S \int_S \lambda_u (v) \psi (v) \cK (u, \ud v) \pii (\ud u )  , \]
which also implies that $\tdelta_\theta$ is non-decreasing in $\theta$.
Also note that, by~\eqref{ass:detail-drifts},
$\int_S \lambda_u (v) \cK (u,\ud v) = \lim_{x \to \infty} \int_S \cmu_1 (x, u, v) \Kss (x,u ,\ud v) = d_u$, and so 
\begin{equation}
\nonumber
  \int_S \int_S \lambda_u (v) \cK(u,\ud v) \pii (\ud u ) =   \int_S  d_u  \pii (\ud u ) = 0,
\end{equation}
which implies that the terms defined in~\eqref{eq:deltadef-general} are invariant under translation of $\psi$. 
This completes the proof of the recurrence classification.
The series representation for~$\psi$ given at~\eqref{eq:psi-series}
follows from Proposition~\ref{prop:potential}.
\end{proof}
  
\begin{proof}[Proof of Theorem~\ref{thm:moments-general}.]
Similarly to the preceding proof,  Theorem~\ref{thm:general-to-strict}
shows that we may apply Theorem~\ref{thm:strict-lamperti-moments} to the transformed process $\txi_n$
to obtain the result, noting that, since $| \xi_n - \txi_n| \leq C$, a.s., for some constant $C< \infty$ and all $n \in \ZP$, existence of a given passage-time moment is equivalent for the two processes.
\end{proof}

\section{Proofs for the stochastic billiards model}
\label{sec:billiards-proofs}

\subsection{Displacement estimates}
\label{sec:billiards-displacement}
 
Recall from Section~\ref{sec:billiards-setup} the construction of the
stochastic billiards process, 
that the reflection kernel~$\Kb$ is a Markov kernel on
  $S = [ -\frac{\pi}{2}, \frac{\pi}{2} ]$, 
and that
$S_0 = [ - \theta_0, \theta_0]$, for
  $\theta_0 \in (0, \frac{\pi}{2})$. 
  Each of the assumptions~\eqref{ass:ellipticity}--\eqref{ass:density-smoothness} plays a role in one or more of the subsidiary results in this section. 

Although~\eqref{ass:ellipticity} ensures that outgoing angles are confined to~$S_0$, the perturbation introduced by the curvature of the domain means that incoming angles can only be confined (for large enough horizontal coordinate) to a bigger interval; thus in this section we need estimates for our functions on angles over an interval $S_1$ containing $S_0$ in its interior. For this reason, we take
$\theta_1 \in ( \theta_0, \frac{\pi}{2})$, and
$S_1 = [ -\theta_1, \theta_1]$, so that $S_0 \subset S_1 \subset S$.
With~$\Lambda$ defined at~\eqref{eq:Lambda-def}, define
$D : \RP \times S_1 \to \R$ by
\begin{equation}\label{eq:D-def}
  D ( z, \oa ) := \Lambda_1 (z, 1 , \oa ) - z .
\end{equation}
Observe
 that $\ell_t (z, -j, \beta)$ is the reflection of $\ell_t (z, j, \beta)$
in the $x$-axis, which, with the reflection symmetry of $\partial \gR$, means that
$\lambda (z, -j, \beta) = \lambda (z, j, \beta)$ and $\Lambda_1 (z, j, \beta) = \Lambda_1 (z, -j, \beta)$.
Hence~\eqref{eq:billiards-construction-location} shows that $Z_{n+1} - Z_n = D (Z_n , \beta_n )$ for any  $Z_n \geq 0$,
so we can interpret~$D$ as the horizontal displacement of the billiards process.

Part~\ref{lem:displacement-a} of the next result states that, outside of a bounded set, successive collisions
occur on opposite sides of the boundary, part~\ref{lem:displacement-b} is a displacement bound, 
while~\eqref{eq:displacement} and~\eqref{eq:displacement-derivative}
give sharp expansions for $D ( z, \oa )$ and its $\beta$-derivative.

\begin{lemma}
\label{lem:displacement}
Suppose that~\eqref{ass:ellipticity} holds. 
There exist constants $C, z_0 \in \RP$ such that
\begin{enumerate}[label=(\alph*)]
\item
\label{lem:displacement-a}
for all $z \geq z_0$, all $\beta \in S_1$, and any $j \in \{-1,+1\}$,
$j \Lambda_2 ( z, j, \beta ) < 0$;
\item 
\label{lem:displacement-b}
for all $z \geq z_0$, $\sup_{\oa \in S_1} | D(z, \oa) | \leq C z^\gamma$.
\end{enumerate}
  Moreover, as $z \to \infty$,
\begin{align}
\label{eq:displacement}
 \sup_{\oa \in S_1} \left| D(z, \oa)  - \left[  2 z^{\gamma} \tan \oa + 2\gamma z^{2\gamma - 1} + 4 \gamma z^{2\gamma-1} \tan^2 \oa  \right] \right| =  O(z^{3\gamma -2}); \\
\label{eq:displacement-derivative}
 \sup_{\beta \in S_1} \left| \frac{\partial}{\partial \beta} D (z, \beta) - 2 z^{\gamma} \sec^2 \beta \right| =  O(z^{2\gamma - 1}).
\end{align}
\end{lemma}
\begin{proof}
The main part of the argument is essentially that given on~\cite[pp.~284--7]{MenPopWad16}, but, since there are a couple of minor errors there,
and~\eqref{eq:displacement-derivative} is new, we outline the main steps. Write $D:= D(z,\oa)$ for convenience.

Recall the definition of $\theta(z)$ from~\eqref{eq:normalangle}
and that, by~\eqref{eq:region-def}, 
$\partial \gR = \{(z,h(z,j)) \colon z \in \RP, \, j \in \{-1,1\} \}$ where $h(z,j) = j z^\gamma$.
Choose $\theta_2 \in (\theta_1, \frac{\pi}{2})$ and $\eps \in (0, \frac{\pi}{2} - \theta_2)$.
Since $( \partial / \partial z) h(z,1)= \gamma z^{\gamma -1} \to 0$ as $z \to \infty$, we may choose $z_1$ large enough
so that, for all $z \geq z_1$,
(i) $\abs{\gamma z^{\gamma -1} }< \eps$, and (ii) $| \theta (z) + \beta | < \theta_2$
for all $\beta \in S_1$. Since $| \theta (z) + \beta |<\pi/2$ it follows that, for
all~$z$ sufficiently large,
 $L(z,j,\beta)$ meets $\partial D_\gamma$ at the opposite boundary, giving~\ref{lem:displacement-a}. 
Furthermore, by assumption~\eqref{ass:ellipticity}, there is a constant $C \in \RP$ such that, for all $z$ sufficiently large,
the ray from $(z, h(z,j) )$ meets the opposite boundary at a point $(z', h(z',-j))$
with $D := z' - z$ satisfying $|D | \leq C z^\gamma$; this gives the bound in part~\ref{lem:displacement-a}.

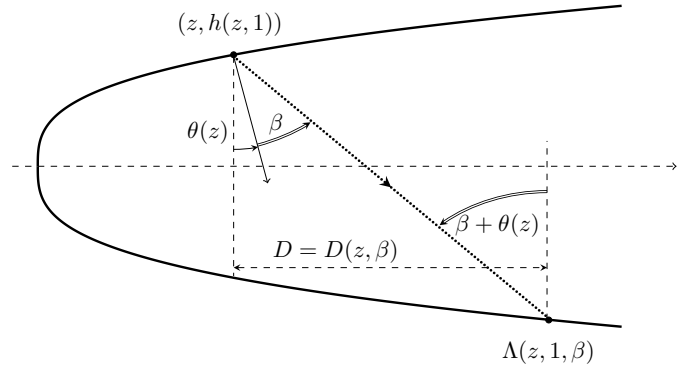
\begin{figure}[!ht]
  \centering
\scalebox{0.8}{\begin{tikzpicture}[scale = 1.4]
	\draw[black, line width = 0.40mm]   plot[smooth,domain=0:1.9,samples=500] (\x^3,    \x);
	\draw[black, line width = 0.40mm]   plot[smooth,domain=0:1.9,samples=500] (\x^3,  -  \x);
    
\draw[->,>=stealth] (2.3,.2) arc (-90:-79:1.5);
\draw[double,->,>=stealth](2.58,.25) arc (-75:-58:2.3);
\draw[double,->,>=stealth] (5.98,-.3) arc (90: 125:2.2);
		
		  \draw[decoration={
    markings,
    mark=at position 0.5 with {\arrow{stealth}}}, postaction={nomorepostaction,decorate}, 	
	black, >=stealth, line width=1.3pt, line cap=round, dash pattern=on 0pt off 1.5\pgflinewidth] (2.3, 1.3)-- (6, -1.8);
		
		\draw[black,dashed,->]  (-0.3,0)--(7.5,0);
\draw[black,dashed]  (2.3,1.3)--(2.3,- 1.3);
    \draw[black, ->] (2.3,1.3)--(2.7,- .2);
    		\draw[black,dashed,<->,>=stealth]  (2.3,-1.2)--(5.98,-1.2);

    \draw[black,dashed] (5.98,-1.82)--(5.98, .3);

\filldraw (2.3,1.32) circle (1.0pt);
\filldraw (6,-1.82) circle (1.0pt);

    \node at (2.26, 1.68) {{\small $(z,h(z,1))$}};
    \node at (2, 0.4) {\small $\theta(z)$};
    \node at (2.8, 0.5) {\small $\oa$};

    \node at (6, -2.2) {\small $\Lambda ( z,1,\oa)$}; 
    \node at (5.4, -0.7) {\small $\oa +\theta(z)$};
        \node at (3.5, -1) {\small $D = D(z,\beta)$}; 
  \end{tikzpicture}}
\caption{An illustration of the horizontal increment between successive collisions.}
\label{F:increment}
\end{figure}

Suppose $z \geq z_0$ as above. Some geometry (see Figure~\ref{F:increment}) 
shows that 
\begin{align}
\label{eq:D-implicit}
D    = (z^\gamma  + (z+D)^{\gamma}) \tan (\theta(z) + \oa)   
               = z^{\gamma} \left[ 1 + (1 + D/z)^{\gamma} \right]\tan(\theta(z) + \oa ),
  \end{align}
	since $| \oa + \theta (z) | < \pi/2$ for $\oa \in S_1$.
	Because $|D| \leq C z^\gamma$, we can use a Taylor's theorem expansion in~\eqref{eq:D-implicit} 
to obtain, uniformly in $\beta \in S_1$, 
\begin{equation}
\nonumber
D = z^\gamma \left[ 2 + \gamma z^{-1} D + O ( z^{2\gamma -2} ) \right] \frac{ \tan \oa + \tan \theta (z) }{1 - \tan \oa \tan \theta (z) } .
\end{equation}
Since $\tan \theta (z) = \gamma z^{\gamma-1}$, and using the fact that $D/ z^\gamma$ and $\tan \beta$ are both $O(1)$,
rewriting the fraction above as $(\tan \oa  + \gamma z^{\gamma-1})(1 + \gamma z^{\gamma-1} \tan \oa +O(z^{2\gamma -2}))$ we obtain 
\begin{align*}
  D &= z^\gamma(2 + \gamma D z^{-1}) (\tan \oa + \gamma z^{\gamma -1})( 1 + \gamma z^{\gamma -1} \tan \oa) + O(z^{3\gamma -2})  \\
  &= 2 z^\gamma \tan \oa + \gamma D z^{\gamma -1 } \tan \oa + 2 \gamma z^{2\gamma -1} (1+ \tan^2 \oa ) + O ( z^{3\gamma -2} ) .
\end{align*}
(The above display corrects the corresponding display at the bottom of p.~286 in~\cite{MenPopWad16}, which has an erroneous extra term.)
Re-arranging the above display we get
\begin{align*} 
D & = \frac{ 2 z^\gamma \tan \oa  + 2 \gamma z^{2\gamma -1} (1+ \tan^2 \oa ) }{ 1 - \gamma  z^{\gamma -1 } \tan \oa }  + O ( z^{3\gamma -2} ) \\
& = \left(  2 z^\gamma \tan \oa  + 2 \gamma z^{2\gamma -1} (1+ \tan^2 \oa ) + O ( z^{3\gamma -2} ) \right) \left( 1 + \gamma  z^{\gamma -1 } \tan \oa
+ O ( z^{2\gamma -2} ) \right) ,\end{align*}
which yields~\eqref{eq:displacement}.
Finally, note that~\eqref{eq:D-implicit} and the implicit function theorem  show that
$D(z,\beta)$ is differentiable in $\beta$. Writing
$D' = (\partial/\partial \beta) D$, we obtain from~\eqref{eq:D-implicit} that
\begin{equation}
\nonumber
 D' 
 =  (z^\gamma  + (z+D)^{\gamma}) \sec^2 (\theta(z) + \beta) + \gamma (z+D)^{\gamma-1} D' \tan (\theta(z) + \beta)   ,
\end{equation}
and hence
\begin{equation}
\nonumber
  D' = \frac{  (z^\gamma  + (z+D)^{\gamma}) \sec^2 (\theta(z) + \beta)}{1 - \gamma (z+D)^{\gamma-1}  \tan (\theta(z) + \beta) }.
\end{equation}
Since $|D| = O (z^\gamma)$, $\sec^2 (\theta(z) + \beta) = \sec^2 \beta + O (z^{\gamma -1})$,
and $\tan  (\theta(z) + \beta) = O(1)$ we obtain~\eqref{eq:displacement-derivative}.
\end{proof}

Lemma~\ref{lem:displacement} gives control over the increments
of the billiards process outside a bounded set,
which, after a suitable transformation of the process
(see Section~\ref{sbsl})
 will more than suffice to check the condition~\eqref{eq:lamperti-p-moments} in~\eqref{ass:moments}. Near the origin,
we must verify the weaker condition~\eqref{eq:q-bound}. This is the purpose of the next result.

\begin{lemma}
\label{lem:bound-near-apex}
Suppose that~\eqref{ass:ellipticity} and~\eqref{ass:billiards-density} hold.
For any $z_0 \in \RP$,
there is a constant $C \in \RP$ such that
for every $r > 0$, 
$\Pr ( Z_{n+1} > r \mid \cF_n ) \leq C/r^{1-\gamma}$ on $\{ Z_n \leq z_0\}$.
\end{lemma}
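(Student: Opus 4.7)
The reflection symmetry of $\gR$ across the $x$-axis lets us treat only $\chi_n = 1$; write $z := Z_n \in [0, z_0]$. Conditional on $\cF_n$, the outgoing angle $\beta := \beta_n$ has a density with respect to Lebesgue measure on $S$ that is bounded by some $M < \infty$ uniformly in $\alpha_n$: when $z > 0$ it is $\kappa(\alpha_n, \cdot)$ with $\kappa$ bounded by~\eqref{ass:billiards-density}, while for $z = 0$ it is the uniform density $(2\theta_0)^{-1}\,\2{S_0}$. Since $Z_{n+1} = \Lambda_1(z, 1, \beta)$, the task reduces to showing that the Lebesgue measure of $\{\beta \in S : \Lambda_1(z, 1, \beta) > r\}$ is at most $C' r^{-(1-\gamma)}$, uniformly in $z \in [0, z_0]$.

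The geometric content is that $\Lambda_1(z, 1, \beta)$ can exceed $r$ only when the outgoing ray $L(z, 1, \beta)$ is nearly horizontal, i.e., $\phi := \theta(z) + \beta$ is close to $\pi/2$. I would analyse the collision equation in two cases. The ray from $(z, z^\gamma)$ in direction $(\sin \phi, -\cos \phi)$ meets $y = -x^\gamma$ at travel time $t$ satisfying $z^\gamma + (z + t \sin \phi)^\gamma = t \cos \phi$ in the case $\phi < \pi/2$, and meets $y = x^\gamma$ at a new point satisfying $(z + t \sin \phi)^\gamma - z^\gamma = -t \cos \phi$ in the case $\phi > \pi/2$. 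In both cases, as $|\cos \phi| \to 0$ the dominant balance $t|\cos \phi| \asymp (t \sin \phi)^\gamma$ yields $t \asymp |\cos \phi|^{-1/(1-\gamma)}$, and hence
\begin{equation*}
\Lambda_1(z, 1, \beta) = z + t \sin \phi \asymp |\cos \phi|^{-1/(1-\gamma)} \text{ as } \phi \to \pi/2,
\end{equation*}
with constants uniform in $z \in [0, z_0]$.

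Consequently, for $r$ large enough, $\{\beta \in S : \Lambda_1(z, 1, \beta) > r\}$ is contained in $\{\beta : |\cos(\theta(z) + \beta)| \leq C r^{-(1-\gamma)}\}$, a union of at most two intervals around $\pi/2$ of total Lebesgue measure $O(r^{-(1-\gamma)})$ in $\phi$, and hence also in $\beta$. Combined with the bound on the density of $\beta$ this delivers the claim. The degenerate case $z = 0$ is fully explicit, since $\ell_t(0, 1, \beta) = t(\cos \beta, \sin \beta)$ gives $Z_{n+1} = (\cot|\beta|)^{1/(1-\gamma)}$ directly; and the regime of $z$ bounded away from $0$ is in fact trivial, because then $\phi = \theta(z) + \beta$ is bounded away from $\pi/2$ for all $\beta \in S_0$, so the event is empty for $r$ sufficiently large. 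The main technical obstacle is therefore producing the asymptotic expansion for $\Lambda_1$ with error terms uniform in $z \in [0, z_0]$ and uniform over the two sub-cases for which boundary the ray next meets; this amounts to a bookkeeping of the implicit-function expansion of the collision equation, analogous to the derivation of~\eqref{eq:displacement} in Lemma~\ref{lem:displacement}.
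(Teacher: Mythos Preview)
Your approach is essentially the same as the paper's: both arguments reduce the claim to the geometric fact that the set of outgoing angles $\beta$ for which $\Lambda_1(z,1,\beta) > r$ has Lebesgue measure $O(r^{\gamma-1})$ uniformly in $z \in [0,z_0]$, and then invoke the uniform bound on the density of $\beta$. The paper simply asserts this geometric fact in one line, whereas you (correctly) flesh it out via the dominant balance $t|\cos\phi| \asymp (t\sin\phi)^\gamma$ in the collision equation.

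One small correction: your claim that for $z$ bounded away from $0$ the angle $\phi = \theta(z)+\beta$ is automatically bounded away from $\pi/2$ for all $\beta \in S_0$ is not true without choosing the threshold carefully---it requires $\theta(z) < \pi/2 - \theta_0$, which fails for $z$ below the specific value $z^*$ solving $\gamma (z^*)^{\gamma-1} = \cot\theta_0$. Either take your cut at $z^*$ rather than at an arbitrary $\epsilon$, or (simpler, and closer to the paper) drop the case split altogether and observe that your asymptotic $\Lambda_1 \asymp |\cos\phi|^{-1/(1-\gamma)}$ already holds uniformly over $z \in [0,z_0]$, which is all you need.
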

\begin{proof}
For any $z \leq z_0$, from point $(z, h(z,j))$
the set of angles~$\beta$ that give $\Lambda_1 (z, j, \beta ) \geq r$
is contained in an interval $I(z,j) \subset S$ with $| I (z,j) | \leq C r^{\gamma -1}$.
Then, since $\Pr ( \beta_{n} \in B \mid \cF_n ) = \Kb ( \alpha_n , B)$, a.s.,
\begin{equation}
\nonumber
  \Pr ( Z_{n+1} > r \mid \cF_n ) \leq \cK ( \alpha_n , I (Z_n, \chi_n ) ) \leq C  r^{\gamma -1} \sup_{\alpha, \beta \in S} \kappa (\alpha, \beta) ,
~ \text{on} ~  \{ Z_n \leq z_0\},
\end{equation}
which gives the result, since $\kappa$ is uniformly bounded under~\eqref{ass:billiards-density}.
\end{proof}

The following fact
will be used to show that the billiards process is non-confined, 
and also concerns the implications of our results for the continuous-time
version of the stochastic billiards process (see Remark~\ref{rems:billiards-real}\ref{rems:billiards-real-b}).

\begin{lemma}
\label{lem:lower-bound}
Suppose that~\eqref{ass:ellipticity} holds. 
There exists $\eps_0 >0$ such that $\lambda (z,j,\beta) \geq \eps_0$
for all $z \in \RP$, $j \in \{-1,+1\}$, and all $\beta \in S_0$.
\end{lemma}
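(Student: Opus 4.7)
The plan is to split the parameter space by size of $z$ and treat the two regimes separately: the tail via the geometry already contained in Lemma~\ref{lem:displacement}, and the bounded part via a compactness and lower semi-continuity argument.

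First, take the constant $z_0$ from Lemma~\ref{lem:displacement}. For $z \geq z_0$, $j\in\{-1,+1\}$ and $\beta\in S_0\subset S_1$, Lemma~\ref{lem:displacement}(i) says that the next collision occurs on the opposite side of $\partial\gR$, so the straight-line trajectory from $(z,h(z,j))$ to $\Lambda(z,j,\beta)$ traverses a vertical distance equal to $z^\gamma + \Lambda_1(z,j,\beta)^\gamma \geq z_0^\gamma$. Since $\lambda(z,j,\beta)$ is the Euclidean length of this segment, it is at least $z_0^\gamma$ in this regime.

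For $(z,j,\beta)$ in the compact set $K := [0,z_0]\times\{-1,+1\}\times S_0$ I would apply compactness. The two things to check are that $\lambda$ is strictly positive on $K$ and lower semi-continuous on $K$ (with $+\infty$ admitted as a value). For positivity when $z > 0$, the outgoing direction has inner product $\cos\beta \geq \cos\theta_0 > 0$ with the inwards normal $n(z,j)$, so the trajectory enters the (smooth) interior strictly and must spend a positive time there before re-hitting the boundary. For $z=0$, the trajectory is simply $t \mapsto t(\cos\beta,\pm \sin\beta)$, and a direct calculation against $|y|=x^\gamma$ shows either $\lambda = +\infty$ (when $\sin\beta=0$) or
\[
\lambda(0,j,\beta) = \left( \frac{\cos^\gamma\beta}{|\sin\beta|}\right)^{1/(1-\gamma)} \geq \left(\frac{\cos^\gamma\theta_0}{\sin\theta_0}\right)^{1/(1-\gamma)} > 0 .
\]
For lower semi-continuity, continuity of $(z,j,\beta,t)\mapsto \ell_t(z,j,\beta)$ together with openness of the interior of $\gR$ gives that whenever $(z_n,j_n,\beta_n)\to(z,j,\beta)$ in $K$ and $t_0\in(0,\lambda(z,j,\beta))$ is fixed, $\ell_{t_0}(z_n,j_n,\beta_n)$ lies in the interior of $\gR$ for all sufficiently large $n$, so $\lambda(z_n,j_n,\beta_n) \geq t_0$; sending $t_0 \uparrow \lambda(z,j,\beta)$ then gives lower semi-continuity at $(z,j,\beta)$.

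A strictly positive lower semi-continuous function on a compact set attains a strictly positive minimum, yielding a uniform lower bound on $K$; taking $\eps_0$ to be the smaller of this minimum and $z_0^\gamma$ completes the proof. The main delicate point, I expect, is the behaviour at the corner points $(z,j)=(0,\pm 1)$, where $\partial\gR$ is non-smooth and the inwards normal is defined only by convention; the explicit ray calculation from the origin bypasses any need for smoothness there and delivers an explicit positive lower bound uniformly in $\beta\in S_0$.
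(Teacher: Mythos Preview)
Your overall strategy is correct and more detailed than the paper's, but there is one gap in the lower semi-continuity step. From the fact that $\ell_{t_0}(z_n,j_n,\beta_n)$ lies in the interior for large $n$ you jump to $\lambda(z_n,j_n,\beta_n)\ge t_0$; however, knowing the ray is interior at the \emph{single} time $t_0$ does not by itself rule out an earlier boundary hit at some $t\in(0,t_0)$. The fix is immediate here because $\gR$ is convex (by concavity of $x\mapsto x^\gamma$ on $\RP$): if $q_n:=\ell_{t_0}(z_n,j_n,\beta_n)\in\mathrm{int}\,\gR$ and the starting point $p_n:=(z_n,h(z_n,j_n))\in\gR$, the standard convexity fact that the half-open segment $(p_n,q_n]$ lies in $\mathrm{int}\,\gR$ gives $\ell_t(z_n,j_n,\beta_n)\in\mathrm{int}\,\gR$ for all $t\in(0,t_0]$, and hence $\lambda(z_n,j_n,\beta_n)>t_0$. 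With this one line added, your argument is complete.

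Your route also differs from the paper's. The paper does not split at $z_0$ or invoke Lemma~\ref{lem:displacement}; instead it asserts that for each fixed $z$ the infimum $\inf_{\beta\in S_0}\lambda(z,j,\beta)$ is attained at $\beta\in\{-\theta_0,\theta_0\}$, uses continuity of $\lambda$ on $\RP\times S_0\setminus\{(0,0)\}$ to obtain continuity of $z\mapsto\inf_\beta\lambda(z,j,\beta)$, and notes this tends to $\infty$ as $z\to\infty$. Your version trades that monotonicity-in-$\beta$ claim for the clean vertical-displacement bound $\lambda\ge z_0^\gamma$ on $\{z\ge z_0\}$ and an explicit computation at the apex; this is arguably more self-contained and makes the role of the corner $z=0$ transparent, at the cost of needing the convexity observation (or an equivalent uniform-transversality argument) to close the lower semi-continuity step.
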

\begin{proof} 
Due to the smoothness of $\partial\gR$, $\inf_{\beta \in S_0} \lambda (z,j,\beta) > 0$ everywhere. 
Moreover, $(z,\beta) \mapsto \lambda (z,j,\beta)$ is continuous over $(z, \beta ) \in \RP \times S_0 \setminus \{ (0,0) \}$. Also, for any $z, j$, $\inf_{\beta \in S_0} \lambda (z,j,\beta)$
is attained at $\beta \in \{ -\theta_0, \theta_0 \}$. Thus $z \mapsto \inf_{\beta \in S_0} \lambda (z,j,\beta)$ is continuous over $z \in \RP$, and tends to $\infty$ as $z \to \infty$. Hence $\inf_{z \in \RP} \inf_{\beta \in S_0} \lambda (z,j,\beta) >0$.
\end{proof}

\subsection{Translation to the half-strip model}
\label{sbsl}

Define $X_n := Z_n^{1-\gamma}$, a rescaling of the horizontal displacement. 

\begin{lemma}
\label{lem:billiards-to-strip-markov}
Suppose that~\eqref{ass:ellipticity}--\eqref{ass:loopable} hold.
The process $(X_n, \alpha_n)$ is a time-homogeneous Markov process on $\Sigma := \RP \times S$, satisfying the following.
\begin{enumerate}[label=(\alph*)]
\item
\label{lem:billiards-to-strip-markov-a}
There exist $x_B, B \in \RP$ such that $\Pr ( | X_{n+1} - X_n | \leq B \mid \cF_n ) = 1$
on $\{ X_n \geq x_B\}$. 
\item
\label{lem:billiards-to-strip-markov-b}
There exists $C \in \RP$ such that $\Pr ( X_{n+1} > r \mid \cF_n ) \leq C/r$ on $\{ X_n \leq x_B \}$.
\item
\label{lem:billiards-to-strip-markov-c}
There is non-confinement: $\limsup_{n \to \infty} X_n = +\infty$, a.s.
\end{enumerate}
\end{lemma}
\begin{proof}
We already observed below~\eqref{eq:D-def} that $Z_{n+1} = \Lambda_1 (Z_n, 1 , \beta_n)$ is 
a function of $Z_n, \beta_n$ only. 
On the other hand, $\Lambda_2 (z, -j, \beta ) = - \Lambda_2 (z, j, \beta)$, which means
that the sign of $j \Lambda_2 (z, j, \beta)$ is the same for~$j \in \{-1, +1\}$.
Hence $\Theta (z, j, \beta)$ defined by~\eqref{eq:inangle} does not depend on~$j$,
and $\alpha_{n+1} = \Theta (Z_n, 1, \beta_n)$ given at~\eqref{eq:billiards-construction-angle}
is a function of $Z_n, \beta_n$ only. Hence $(Z_n, \alpha_n)$ is a time-homogeneous Markov process on $\Sigma := \RP \times S$.
The same is true for $(X_n, \alpha_n)$, since $(z,\alpha) \mapsto (z^{1-\gamma},\alpha)$
is a bijection for $\gamma \in (0,1)$.

For statement~\ref{lem:billiards-to-strip-markov-a},
Taylor's theorem applied to the function $z \mapsto z^{1-\gamma}$ shows that
\begin{equation}
\label{eq:simple-taylor}
 (z+D)^{1-\gamma} - z^{1-\gamma} = z^{1-\gamma} \left[ \left(1 + \frac{D}{z} \right)^{1-\gamma} - 1 \right]
= (1-\gamma) D z^{-\gamma} (1+o(1) ) ,
\end{equation}
if $|D| = o(z)$ as $z \to \infty$.
Lemma~\ref{lem:displacement} shows that
$|Z_{n+1} - Z_n | \leq C Z_n^\gamma$ on $\{ Z_n \geq z_0 \}$,
and so~\eqref{eq:simple-taylor}
with $z = Z_n$ and $D = Z_{n+1} - Z_n$ implies that
 $| X_{n+1} - X_n | \leq 2 C (1-\gamma)$, on $\{ X_n \geq x_B \}$ for $x_B$ sufficiently large.
For statement~\ref{lem:billiards-to-strip-markov-b},
it follows directly from Lemma~\ref{lem:bound-near-apex} 
that $\Pr ( X_{n+1} > r \mid \cF_n ) = O (1/r)$ on $\{ X_n \leq x_B \}$.
For statement~\ref{lem:billiards-to-strip-markov-c},
we have from~\eqref{ass:ellipticity} and Lemma~\ref{lem:lower-bound} that there is a $z_0 \in (0,\infty)$ such that
$\Pr ( X_{n+1} \geq 2 z_0 \mid \cF_n ) = 1$ on $\{ X_n \leq z_0 \}$,
while assumption~\eqref{ass:loopable}
ensures that there is $\eps>0$ for which $\Pr (X_{n+1} - X_n \geq \eps \mid \cF_n) \geq \eps$ on $\{ X_n > z_0 \}$.
The combination of these two facts implies  $\limsup_{n \to \infty} X_n = +\infty$, by,
for instance, Proposition~3.3.4 of~\cite{MenPopWad16}.
\end{proof}

For $z \in \RP$ and $\beta \in S$, define
\begin{equation}
\label{eq:alpha-beta} 
b_z (\beta ) := \theta ( z ) + \theta ( \Lambda_1 (z,1,\beta) ) .
\end{equation}
From Lemma~\ref{lem:displacement}\ref{lem:displacement-a}, \eqref{eq:inangle} and~\eqref{eq:alpha-beta}, we have that for
all $\beta \in S_1$ and all~$z \in \RP$ sufficiently
large~$\Theta (z, j , \beta ) = \beta + b_z (\beta)$ does not depend
on~$j$; for ease of notation, we write
$\Theta_z ( \beta ) := \beta + b_z (\beta)$ and
$\Theta'_z (\beta) := (\partial/\partial \beta) \Theta_z (\beta)$.  We
will need the following basic properties of~$\Theta_z$.

\begin{lemma}
\label{lem:theta-properties}
 Let $\theta_1 \in ( \theta_0, \frac{\pi}{2})$ be arbitrary and
  recall  $S_1 = [ -\theta_1, \theta_1]$.
Then, as $z \to \infty$,
\begin{equation}
  \label{eq:theta_unif}
   \sup_{\beta \in S_1} \left| z^{1-\gamma} ( \Theta_z (\beta) - \beta
    ) - 2 \gamma \right| \to 0, ~\text{and}~ \sup_{\beta \in S_1}
  \left| \Theta'_z (\beta) - 1 \right| = O ( z^{2\gamma -2} ).
\end{equation}
Moreover there exists a differentiable $T_z : S_1 \to S $ such that,
for all $z$ sufficiently large, $\Theta_z (T_z (\beta)) = \beta$
for every $\beta \in S_1$.  The
function~$T_z$ satisfies, as $z \to \infty$,
\begin{equation}
  \label{eq:Tzsup}
  \sup_{\beta \in S_1}
  \left| z^{1-\gamma}(T_z (\beta)-\beta) + 2\gamma \right|\to 0,
  ~\text{and}~ \sup_{\beta \in S_1}
  \left| T'_z (\beta) - 1 \right| = O ( z^{2\gamma -2} ).
\end{equation}
\end{lemma}
\begin{proof}
  Note that~\eqref{eq:D-def} and Lemma~\ref{lem:displacement}\ref{lem:displacement-b}  imply that
    $\sup_{\beta \in S_1} \abs{\Lambda_1(z,1,\beta) - z} = O(z^\gamma)$, as $z \to \infty$.
  Then, by~\eqref{eq:normalangle}, it follows that
  $\theta (z) \sim \gamma z^{\gamma -1}$ and
  $\theta(\Lambda_1(z,1,\beta)) \sim \gamma z^{\gamma -1}$, uniformly for $\beta \in S_1$. We also
  note that by Lemma~\ref{lem:displacement}\ref{lem:displacement-a}, \eqref{eq:inangle}, and \eqref{eq:alpha-beta},
  for all $z$ large enough and all $\beta \in S_1$,
  $\Theta_z(\beta) -\beta = b_z(\beta) = \theta ( z ) + \theta (
  \Lambda_1 (z,1,\beta) )$. Therefore, 
\begin{equation}
\nonumber
  \lim_{z \to \infty}  \sup_{\beta \in S_1} \left| z^{1-\gamma} ( \Theta_z (\beta) - \beta
    ) - 2 \gamma \right| =
    \lim_{z \to \infty} \sup_{\beta \in S_1}  |z^{1-\gamma} b_z (\beta)
  - 2 \gamma | = 0, 
\end{equation} 
establishing the first statement in~\eqref{eq:theta_unif}.
Let $b_z' (\beta) := (\partial/\partial \beta) b_z (\beta)$, where $b_z$~is defined at~\eqref{eq:alpha-beta}.
Since $\Theta_z (\beta) = \beta + b_z (\beta)$, we have that
$\Theta'_z (\beta) = 1 + b_z'(\beta)$. From~\eqref{eq:D-def} we have
$\frac{\partial}{\partial\beta}\Lambda_1(z,1,\beta) = D'(z,\beta)$ and, by the chain rule, 
  \begin{equation}
    \label{eq:derivative-b-bound}
    \sup_{\beta \in S_1} | \Theta'_z(\beta) -1 | =
    \sup_{\beta \in S_1} | b'_z(\beta)|
    = \sup_{\beta \in S_1} | \theta'(\Lambda_1(z,1,\beta))
    D'(z,\beta)|,
  \end{equation}
where differentiation of~\eqref{eq:normalangle} shows that
\begin{equation}
\nonumber
  \theta' (z) := \frac{\ud}{\ud z} \theta (z) = - \frac{\gamma(1-\gamma) z^{\gamma-2}}{1 + \gamma^2 z^{2\gamma -2}} = - \gamma(1-\gamma) (1+o(1)) z^{\gamma -2} .
\end{equation}
By~\eqref{eq:D-def} and Lemma~\ref{lem:displacement}\ref{lem:displacement-b}, it follows that $\theta'
(\Lambda_1(z,1,\beta)) = O(z^{\gamma -2})$ and also, by
\eqref{eq:displacement-derivative}, it holds
that $\sup_{\beta \in
  S_1} \abs{D'(z,\beta)} = O(z^\gamma)$. Thus, by~\eqref{eq:derivative-b-bound}, we
  obtain the second statement 
in~\eqref{eq:theta_unif}.  We now turn to the
  proof of \eqref{eq:Tzsup}. Take $\theta_1' \in (\theta_0,\theta_1)$, and let $S_1' := [ -\theta'_1, \theta'_1]$, so that $S_0 \subset S_1' \subset S_1$. 
By~\eqref{eq:theta_unif}, we have
 $\inf_{\beta \in S_1} \Theta'_z (\beta) > 0$ for all $z$
sufficiently large,
and the image $\Theta_z (S_1)$ contains $S_1'$.
Thus for all $z$ sufficiently large, there is an inverse
function $T_z : S'_1 \to S_1$
such that
$\Theta_z (T_z (\beta)) = \beta$ for all $\beta \in S_1'$.
Moreover, 
by~\eqref{eq:theta_unif},
$T'_z (\beta) = 1/ \Theta_z' (T_z(\beta))$ satisfies
$\sup_{\beta \in S'_1} | T'_z (\beta) - 1 | = O (
z^{2\gamma-2})$.  Since
$\beta = \Theta_z (T_z (\beta ) ) = T_z (\beta) + b_z ( T_z (\beta))$ for every $\beta \in S'_1$,
by \eqref{eq:theta_unif} we have that
\begin{align*}
   \lim_{z \to \infty} \sup_{\beta \in S'_1} \left| z^{1-\gamma} ( T_z
      (\beta)- \beta) + 2\gamma \right|
    &= \lim_{z \to \infty} \sup_{\beta \in S'_1}
    \left|z^{1-\gamma} (\beta  - T_z(\beta)) - 2\gamma \right|\\
    &= \lim_{z \to \infty} \sup_{\beta \in S'_1}
    \left| z^{1-\gamma} ( \Theta_z (T_z (\beta)) - T_z (\beta)) - 2\gamma \right|= 0.
\end{align*}
Thus we have established~\eqref{eq:Tzsup}, but over $\beta \in S'_1$
rather than the (larger) $S_1$;  since both $S'_1$ and $S_1$ were chosen arbitrarily, a suitable relabelling shows that~\eqref{eq:Tzsup} holds as written.
\end{proof}

Write
$T_z (B) := \{ \beta \in S_0 : \beta + b_z (\beta ) \in B \}$ for $B \in \cB (S)$, and
define $\Ksp : \RP \times S \times \cB (S) \to [0,1]$ via
\begin{equation}
\label{eq:billiards-pseudo-kernel-def}
 \Ksp ( x , \alpha , B ) := \cK ( \alpha, T_{x^{1/(1-\gamma)}} (B) ) .
\end{equation}
By~\eqref{eq:inangle}  and~\eqref{eq:billiards-construction-angle},  if
we denote the next incoming angle by $\alpha_{n+1}$ and the next
outgoing angle by $\beta_n$,
 with the notation of \eqref{eq:alpha-beta} we see that there is $x_1 \in
 \RP$ for which
\begin{equation}
\label{eq:alpha-beta-2} 
\alpha_{n+1} = \beta_n + b_{Z_n} ( \beta_n), ~\text{on}~ \{ X_n \geq
x_1 \}.
\end{equation}
We now note that, on $\{ X_n \geq x_1\}$,
\[
  \Pr ( \alpha_{n+1} \in B \mid \cF_n)   = \Pr ( \beta_n + b_{Z_n}
  (\beta_n) \in B \mid \cF_n ) = \Pr ( \beta_n \in T_{Z_n} ( B ) \mid
  \cF_n ) ,
\]
and that, since $\Pr ( \beta_{n} \in B \mid \cF_n ) = \Kb ( \alpha_n , B)$, a.s., 
\begin{equation}
\label{eq:billiards-pseudo-kernel-as}
\Pr  ( \alpha_{n+1} \in B \mid \cF_n) = \cK ( \alpha_n, T_{Z_n} (B) ) = \Ksp ( X_n, \alpha_n ,B ), ~\text{on}~ \{ X_n \geq x_1 \}, 
\end{equation}
using~\eqref{eq:billiards-pseudo-kernel-def}. The next result shows that $\Ksp$ satisfies
the asymptotic Markovianity condition~\eqref{ass:asymptotically-markov} or~\eqref{ass:asymptotically-markov+}, as appropriate.

\begin{lemma}
\label{lem:billiards-asymptotically-markov}
Suppose that~\eqref{ass:ellipticity}--\eqref{ass:loopable} hold. 
Then for  $\Ksp$ as defined at~\eqref{eq:billiards-pseudo-kernel-def} and $\cK$ the
billiards 
reflection kernel, it holds that
\begin{equation}
\label{eq:billiards-asymptotically-markov}
\lim_{x \to \infty} \sup_{\alpha \in S} \norm{ \Ksp (x, \alpha, \, \cdot \, ) - \cK (\alpha, \, \cdot \,)}_\tv = 0 .
\end{equation}
Moreover, if~\eqref{ass:density-smoothness} holds, then, as $x \to \infty$, 
\begin{equation}
\label{eq:billiards-asymptotically-markov+} 
\sup_{\alpha \in S} \norm{ \Ksp (x, \alpha, \, \cdot \, ) - \cK (\alpha, \, \cdot \,) - x^{-1} \meas_\alpha }_\tv = o (x^{-1} ).
\end{equation}
Here  $\alpha \mapsto \meas_\alpha$ is continuous from $(S, d_S)$ to $( \Meass(S), d_\tv)$, given by
\begin{equation}
  \label{eq:gammadef}
  \meas_\alpha (B) := - 2 \gamma \int_B   \kappa' (\alpha,\beta) \ud \beta, \text{ for all } B \in \cB(S),
\end{equation} 
where $\kappa' (\alpha,\beta)  = (\partial/\partial \beta) \kappa (\alpha,\beta)$ as in~\eqref{ass:density-smoothness}.
\end{lemma}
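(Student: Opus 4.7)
The plan is to use the density representation of $\cK$ from~\eqref{ass:billiards-density} together with the change of variables induced by the inverse $T_z$ of $\Theta_z$ from Lemma~\ref{lem:theta-properties}, whose uniform asymptotics on $S_1$ are
\[
  T_z(\beta) = \beta - 2\gamma z^{\gamma-1} + o(z^{\gamma-1}), \qquad T'_z(\beta) = 1 + O(z^{2\gamma-2}).
\]
Here $z = x^{1/(1-\gamma)}$, so that $z^{\gamma-1} = x^{-1}$ and $z^{2\gamma-2} = x^{-2}$.

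First I would observe that, by ellipticity~\eqref{ass:ellipticity}, $\kappa(\alpha, \, \cdot \,)$ is supported on $S_0$, and $\Theta_z(S_0) \subset S_1$ for $z$ large enough (because $\theta_0 < \theta_1$ and Lemma~\ref{lem:theta-properties} gives $\sup_{\beta \in S_1} \abs{\Theta_z(\beta) - \beta} = O(z^{\gamma-1})$). A change of variables $\beta = T_z(\eta)$, $d\beta = T'_z(\eta)\, d\eta$ in the integral defining $\cK(\alpha, T_z(B))$ then identifies the density of $\Ksp(x, \alpha, \, \cdot \,)$ on $S_1$ as $\eta \mapsto \kappa(\alpha, T_z(\eta))\, T'_z(\eta)$ (with both $\Ksp(x, \alpha, \, \cdot \,)$ and $\cK(\alpha, \, \cdot \,)$ vanishing outside $S_1$), yielding
\[
  \norm{\Ksp(x, \alpha, \, \cdot \,) - \cK(\alpha, \, \cdot \,)}_\tv = \int_{S_1} \abs{\kappa(\alpha, T_z(\eta))\, T'_z(\eta) - \kappa(\alpha, \eta)}\, d\eta.
\]
Claim~\eqref{eq:billiards-asymptotically-markov} is then immediate from the uniform equicontinuity and boundedness of $\kappa$ (from~\eqref{ass:billiards-density}), combined with the uniform estimates on $T_z$ and $T'_z$.

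For~\eqref{eq:billiards-asymptotically-markov+}, I would Taylor-expand using~\eqref{ass:density-smoothness}: the uniform bounds on $\kappa'$ and $\kappa''$, together with the asymptotics of $T_z$, give
\[
  \kappa(\alpha, T_z(\eta)) = \kappa(\alpha, \eta) - \frac{2\gamma}{x}\, \kappa'(\alpha, \eta) + o(x^{-1}),
\]
uniformly in $(\alpha, \eta) \in S \times S_1$. Multiplying by $T'_z(\eta) = 1 + O(x^{-2})$ then gives
\[
  \kappa(\alpha, T_z(\eta))\, T'_z(\eta) - \kappa(\alpha, \eta) = -\frac{2\gamma}{x}\, \kappa'(\alpha, \eta) + o(x^{-1}),
\]
uniformly. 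Integrating over $B \in \cB(S)$ (noting that $\kappa'(\alpha, \cdot)$ vanishes outside $S_0$, since $\kappa$ does and $\kappa'$ is continuous) yields~\eqref{eq:billiards-asymptotically-markov+} with $\meas_\alpha$ as in~\eqref{eq:gammadef}.

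Finally, $\meas_\alpha(S) = -2\gamma [\kappa(\alpha, \pi/2) - \kappa(\alpha, -\pi/2)] = 0$ by the fundamental theorem of calculus together with ellipticity, while continuity of $\alpha \mapsto \meas_\alpha$ in total variation, that is, $\norm{\meas_\alpha - \meas_{\alpha_0}}_\tv = 2\gamma \int_S \abs{\kappa'(\alpha, \beta) - \kappa'(\alpha_0, \beta)}\, d\beta \to 0$ as $\alpha \to \alpha_0$, follows from pointwise continuity of $\kappa'(\cdot, \beta)$, its uniform boundedness (both in~\eqref{ass:density-smoothness}), and dominated convergence. The main technical point is keeping the Taylor expansion uniform in $\alpha$, which is ensured by the uniform bounds on $\kappa'$ and $\kappa''$ in~\eqref{ass:density-smoothness}.
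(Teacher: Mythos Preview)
The proposal is correct and follows essentially the same approach as the paper: identify the density of $\Ksp(x,\alpha,\,\cdot\,)$ via the change of variables $T_z$, use uniform equicontinuity of $\kappa$ plus the asymptotics of Lemma~\ref{lem:theta-properties} for~\eqref{eq:billiards-asymptotically-markov}, and Taylor-expand using the uniform bounds on $\kappa',\kappa''$ from~\eqref{ass:density-smoothness} for~\eqref{eq:billiards-asymptotically-markov+}. The only cosmetic difference is that you work directly with the $L^1$ density formula for the total variation norm, whereas the paper uses the equivalent dual (test-function) formulation; the substance of the argument is the same.
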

\begin{proof}
Let $\kappa$ be the density from~\eqref{ass:billiards-density}. 
First note that 
\begin{equation}
\label{ksbtv}
  \norm{\Ksp (x,\alpha, \, \cdot \, ) - \Kb(\alpha, \, \cdot \, )}_{\tv} = 
	{\sup_{f \in \Cb(S): \norm{f}\leq 1}} \int_S  f(\beta) \left( \Ksp(x,\alpha, \ud \beta) -  \Kb(\alpha , \ud \beta) \right),
\end{equation}
where $\| f \|:= \sup_{\alpha \in S} | f(\alpha) |$, and we emphasize that in~\eqref{ksbtv}, $\beta$ in $\Kb$ represents the next \emph{outgoing angle}, but in $\Ksp$ it is the subsequent \emph{incoming angle}.
From~\eqref{eq:billiards-pseudo-kernel-def}, for $x \geq x_1$, 
\begin{align}\label{eq:ksb}
\int_S  f(\beta)  \Ksp(x,\alpha, \ud \beta) & = \int_S  f(\beta) \cK ( \alpha, T_{x^{1/(1-\gamma)}} ( \ud \beta ) ) \nonumber\\
& = \int_{S} f (\beta ) \kappa (\alpha, T_{x^{1/(1-\gamma)}} (  \beta ) )  T'_{x^{1/(1-\gamma)}} (  \beta )  \ud \beta .
\end{align}
Since~\eqref{ass:ellipticity} states that $\kappa (\alpha, \beta) =0$ for $\beta \notin S_0$,
and $T_z (\beta ) \to \beta$  uniformly for $\beta \in S_1$ (see~\eqref{eq:Tzsup} in~Lemma~\ref{lem:theta-properties}),
for all $x$ large enough we can replace $S$ by $S_1$ in the final integral in~\eqref{eq:ksb}. 
Thus from~\eqref{eq:ksb}, the boundedness of~$\kappa$ from~\eqref{ass:billiards-density}, and the bound on~$T_z'$ from Lemma~\ref{lem:theta-properties},
\begin{equation}
\label{eq:density-shift}
\sup_{f : \| f \| \leq 1} \sup_{\alpha \in S} \left|  \int_S  f(\beta)  \Ksp(x,\alpha, \ud \beta)  - \int_{S_1} f ( \beta  )  \kappa (\alpha,  T_{x^{1/(1-\gamma)}} (\beta) ) \ud  \beta \right| = O (x^{-2}) ,
\end{equation}
as $x \to \infty$.
By the uniform equicontinuity  of $\kappa (\alpha, \, \cdot \,)$ from~\eqref{ass:billiards-density}, 
and the fact that $T_z (\beta) \to \beta$ uniformly in $\beta \in S_1$, it then follows from~\eqref{eq:density-shift} that
\[
  \lim_{x \to \infty} \sup_{f : \| f \| \leq 1} \sup_{\alpha \in S} \left|  \int_S  f(\beta)  \Ksp(x,\alpha, \ud \beta)  - \int_{S} f (  \beta  )  \kappa (\alpha,  \beta ) \ud   \beta \right| = 0 .
\]
Together with~\eqref{ksbtv}, this yields~\eqref{eq:billiards-asymptotically-markov}.
 
For~\eqref{eq:billiards-asymptotically-markov+}, we look in more detail at~\eqref{eq:density-shift}. 
Under assumption~\eqref{ass:density-smoothness},
we have that $(\partial / \partial \beta) \kappa (\alpha ,\beta ) = \kappa' (\alpha, \beta)$
and $(\partial^2 / \partial^2 \beta) \kappa (\alpha ,\beta ) = \kappa'' (\alpha, \beta)$
are both uniformly bounded over $\alpha, \beta \in S$.
Then, by the uniform boundedness of $\kappa'$ and $\kappa''$, 
and the asymptotics for~$T_z(\beta) - \beta$ from Lemma~\ref{lem:theta-properties}, 
Taylor's theorem with Lagrange remainder shows that
\[
 \sup_{\alpha \in S} \sup_{\beta \in S_1} \left| \kappa (\alpha, T_{x^{1/(1-\gamma)}} (\beta)) - \kappa (\alpha, \beta)
 + 2\gamma x^{-1}  \kappa' (\alpha, \beta) \right| = o (x^{-1}) .
\]
Thus from~\eqref{eq:density-shift} we obtain
\[
 \sup_{f : \| f \| \leq 1} \sup_{\alpha \in S} \left|  \int_S  f(\beta)  \Ksp(x,\alpha, \ud \beta)  - 
\int_{S} f (  \beta  )  \kappa (\alpha,  \beta ) \ud \beta 
+ \frac{2\gamma}{x} \int_{S} f (  \beta  ) \kappa' (\alpha,   \beta) \ud \beta \right| = o (x^{-1}) .
\]
But the left-hand side here is  $ \norm{ \Ksp (x, \alpha, \, \cdot \, ) - \cK (\alpha, \, \cdot \,) - x^{-1} \meas_\alpha }_\tv$,
where $\meas_\alpha$ is given by~\eqref{eq:gammadef}.
Finally, continuity of $\alpha \mapsto \meas_\alpha$
follows from the fact that 
\[ \sup_{B \in \cB (S)} \left| \meas_\alpha (B) - \meas_{\alpha'} (B) \right| \leq 2 \gamma \int_S \left| \kappa' (\alpha, \beta ) - \kappa' (\alpha' , \beta ) \right| \ud \beta \to 0 ,\]
as $\alpha' - \alpha \to 0$, by dominated convergence, the uniform boundedness of~$\kappa'$, and the continuity of $\alpha \mapsto \kappa' (\alpha, \beta)$ from~\eqref{ass:density-smoothness}.
\end{proof}
 
By Lemma~\ref{lem:billiards-to-strip-markov} there exist measurable $\mu_k : \RP \times S \to \R$ such that, on $\{ X_n \geq x_B\}$, 
\[
  \Exp [ X_{n+1} - X_n \mid \cF_n ] = \mu_1 (X_n, \alpha_n ) , ~~~ \Exp [ ( X_{n+1} - X_n)^2 \mid \cF_n ] = \mu_2 (X_n, \alpha_n ) .
\]
Also, similarly to~\eqref{eq:mu-disintegration}, by disintegration there exists
 $\cmu_1 : \RP \times S \times S \to \R$ such that 
\begin{equation}
\label{eq:tmu1-def}
\mu_1 (x, \alpha ) = \int_S \Ksp ( x, \alpha , \ud \beta ) \cmu_1 ( x, \alpha, \beta ) , ~\text{for all}~x \geq x_B,
\end{equation}
where~$\Ksp$ is given by~\eqref{eq:billiards-pseudo-kernel-def};
we emphasize that while we often use~$\beta$ for the next outgoing angle, in~\eqref{eq:tmu1-def}
and other equations involving~$\cmu_1$, $\beta$~represents the  subsequent \emph{incoming} angle. 
Recall the definition of $\rho_k$ from~\eqref{eq:rho-def}.

\begin{lemma}
\label{lem:transformed-billiards-lamperti}
Suppose that~\eqref{ass:ellipticity}--\eqref{ass:billiards-invariant} hold.
For each $k \in \N$, $\rho_k \in \Cb (S)$, and, as $x \to \infty$,
\begin{align}
\label{eq:billiards-mu1}
\sup_{\alpha \in S} \bigl| \mu_1 (x, \alpha ) -   2 (1-\gamma) \rho_1 ( \alpha)  -  2 x^{-1} \gamma (1-\gamma) ( 1 + \rho_2 (\alpha ) ) \bigr| & = O ( 1/x^2) ; \\
\label{eq:billiards-tmu1}
\sup_{\alpha \in S} \sup_{\beta \in S_1} \bigl| \cmu_1 (x, \alpha , \beta ) -  2 (1-\gamma) \tan \beta \bigr| & = O (1/x) ; \\
\label{eq:billiards-mu2}
\sup_{\alpha \in S} \bigl| \mu_2 (x, \alpha ) -  4 (1-\gamma)^2 \rho_2 (\alpha ) \bigr| & = O (1/x) .
\end{align}
\end{lemma}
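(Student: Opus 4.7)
The plan is to derive all three estimates from a single Taylor expansion of $z\mapsto z^{1-\gamma}$ applied to the identity $X_{n+1}-X_n = (Z_n + D(Z_n,\beta_n))^{1-\gamma} - Z_n^{1-\gamma}$, combined with the expansions for $D(z,\beta)$ from Lemma~\ref{lem:displacement} and for $T_z$ from Lemma~\ref{lem:theta-properties}. The key observation that makes the bookkeeping pleasant is that, with $z = x^{1/(1-\gamma)}$, one has $z^{\gamma-1}=x^{-1}$ and $z^{2\gamma-2}=x^{-2}$, so the geometric corrections of order $z^{\gamma-1}$ translate directly into $1/x$-corrections. Throughout, uniformity in $\alpha$ will be automatic because $\beta_n$ is $\cK(\alpha_n,\,\cdot\,)$-distributed and by~\eqref{ass:ellipticity} this distribution is supported in the fixed compact $S_0\subset S_1$, on which $\tan$ and $\sec$ are bounded.

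For~\eqref{eq:billiards-mu1}, I would apply Taylor's theorem to second order:
\[
(z+D)^{1-\gamma}-z^{1-\gamma} = (1-\gamma)z^{-\gamma}D - \tfrac{\gamma(1-\gamma)}{2}z^{-\gamma-1}D^2 + O(z^{-\gamma-2}|D|^3).
\]
Since Lemma~\ref{lem:displacement}(ii) gives $|D|=O(z^\gamma)$, the cubic remainder is $O(z^{2\gamma-2})=O(x^{-2})$. Substituting the expansion~\eqref{eq:displacement}, the linear term contributes $2(1-\gamma)\tan\beta_n + 2\gamma(1-\gamma)z^{\gamma-1}(1+2\tan^2\beta_n)$, while $D^2 = 4z^{2\gamma}\tan^2\beta_n + O(z^{3\gamma-1})$ makes the quadratic term contribute $-2\gamma(1-\gamma)z^{\gamma-1}\tan^2\beta_n$. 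Summing and using $1+\tan^2\beta=\sec^2\beta$ yields
\[
X_{n+1}-X_n = 2(1-\gamma)\tan\beta_n + 2\gamma(1-\gamma)x^{-1}\sec^2\beta_n + O(x^{-2}),
\]
uniformly in $\beta_n\in S_0$. Taking $\Exp[\,\cdot\mid\cF_n]$ integrates $\beta_n$ against $\cK(\alpha_n,\,\cdot\,)$, replacing $\tan\beta_n$ and $\sec^2\beta_n=1+\tan^2\beta_n$ with $\rho_1(\alpha_n)$ and $1+\rho_2(\alpha_n)$ respectively, giving~\eqref{eq:billiards-mu1}. Squaring the same expansion and keeping only the leading term $4(1-\gamma)^2\tan^2\beta_n$, with error $O(x^{-1})$ coming from the cross-terms, yields~\eqref{eq:billiards-mu2} upon integration.

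For the disintegration bound~\eqref{eq:billiards-tmu1}, I would use that for $x$ sufficiently large the map $\beta_n\mapsto\alpha_{n+1}=\Theta_{Z_n}(\beta_n)$ is a measurable bijection with inverse $T_{Z_n}$ (Lemma~\ref{lem:theta-properties}). Conditioning on $\alpha_{n+1}=\beta$ therefore makes $\beta_n=T_{Z_n}(\beta)$ deterministic, so $\cmu_1(x,\alpha,\beta)$ equals $(z+D(z,T_z(\beta)))^{1-\gamma}-z^{1-\gamma}$ with $z=x^{1/(1-\gamma)}$. Inserting $T_z(\beta)=\beta+O(z^{\gamma-1})$ into the leading-order expansion above gives $\cmu_1(x,\alpha,\beta)=2(1-\gamma)\tan\beta+O(1/x)$ uniformly in $\alpha\in S$ and $\beta\in S_1$. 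Finally, continuity $\rho_k\in\Cb(S)$ follows from~\eqref{ass:billiards-density}: writing $\rho_k(\alpha)=\int_{S_0}\kappa(\alpha,\beta)\tan^k\beta\,\ud\beta$, uniform equicontinuity of $\kappa(\cdot,\beta)$ together with boundedness of $\tan$ on $S_0$ gives (uniform) continuity of $\rho_k$.

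I do not expect a deep conceptual obstacle: the only point requiring care is to verify that the error bounds in Lemmas~\ref{lem:displacement} and~\ref{lem:theta-properties} are uniform in $\beta$ over a compact set on which $\tan\beta$ is bounded, so that the Taylor remainders are genuinely of the stated order; the constraint $\beta\in S_0$ under~\eqref{ass:ellipticity} (respectively $\beta\in S_1$ for $\cmu_1$) is precisely what makes this work and is the reason the statement is formulated with $\beta\in S_1$ rather than $\beta\in S$.
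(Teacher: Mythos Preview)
Your proposal is correct and follows essentially the same approach as the paper: a second-order Taylor expansion of $(z+D)^{1-\gamma}-z^{1-\gamma}$, insertion of the displacement estimate~\eqref{eq:displacement}, conditioning to obtain $\mu_1,\mu_2$ via $\rho_1,\rho_2$, and identification $\cmu_1(x,\alpha,\beta)=\tcmu_1(x,\alpha,T_z(\beta))$ using the inverse from Lemma~\ref{lem:theta-properties}. The only cosmetic difference is that the paper phrases the continuity of $\rho_k$ via Lemma~\ref{lem:continuity-against-K} (after extending $\tan^k$ to all of $S$), whereas you argue directly from the uniform equicontinuity in~\eqref{ass:billiards-density}; both are fine.
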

\begin{proof}
Since $\cK (\alpha, S_0) = 1$, we may extend $\tan^k \alpha$ continuously to a uniformly bounded function over $S$,
and then an application of Lemma~\ref{lem:continuity-against-K} shows that $\rho_k$ is continuous and bounded, as claimed.
Denote the increment of process $X_n$ at $(x,\alpha)$ when the outgoing angle is $\beta$ by
$\tcmu_1 ( x, \alpha, \beta )$.
Then, on $\{ X_n > x_1 \}$ for $x_1$ sufficiently large,
\[
  X_{n+1} - X_n = Z_{n+1}^{1-\gamma} - Z_n^{1-\gamma} = \Lambda_1 (Z_n , 1, \beta_n )^{1-\gamma} - Z_n^{1-\gamma} = \tcmu_1 ( X_n, \alpha_n, \beta_n) .
\]
Moreover, since $\Pr ( \beta_n \in B \mid \cF_n ) = \cK ( \alpha_n , B)$, a.s., 
we have 
\[
  \mu_1 (x, \alpha ) = \int_S \cK ( \alpha , \ud \beta )   \tcmu_1 ( x, \alpha, \beta ) ;
\]
compare to~$\cmu_1$ as defined through~\eqref{eq:tmu1-def}. 
For $x = z^{1-\gamma}$ and $D(z,\beta)$ as defined at~\eqref{eq:D-def},
\begin{align*}
\tcmu_1 ( x, \alpha, \beta )= \Lambda_1 (z , 1, \beta )^{1-\gamma} - z^{1-\gamma} & = z^{1-\gamma} \left[ \left( 1 + \frac{D(z, \beta)}{z} \right)^{1-\gamma} - 1 \right] .
\end{align*}
Lemma~\ref{lem:displacement} shows that $\sup_{\beta \in S_0} | D(z, \beta ) | = O (z^\gamma)$,
so that, by Taylor's theorem,
\[
  \tcmu_1 ( z^{1-\gamma} , \alpha , \beta ) = (1-\gamma) z^{-\gamma} D(z, \beta) - \frac{\gamma (1-\gamma)}{2} z^{-1-\gamma}  D(z, \beta)^2 + O ( z^{2\gamma-2} ) ,
\]
uniformly over $\alpha \in S, \beta \in S_0$. 
Thus we obtain from~\eqref{eq:displacement} 
that 
\begin{equation}
  \label{eq:tmu1-expansion}
  \sup_{\alpha \in S} \sup_{\beta \in S_0} 
  \left|  \tcmu_1 (x , \alpha , \beta ) - 2 (1-\gamma) \tan \beta  - 2 \gamma (1-\gamma) x^{-1} \left[ 1 +\tan^2 \beta  \right] \right| = O ( 1/x^2 ) .
\end{equation}
Recall from~\eqref{eq:billiards-construction-angle} and Lemma~\ref{lem:theta-properties} that
$\alpha_{n+1} = \Theta_{Z_n} (\beta_n)$, where $\Theta_z$ has an inverse $T_z$ 
such that  $\sup_{\beta \in S_1} | T_z ( \beta ) - \beta | = O (z^{\gamma-1})$.
 Then we see that
\[  \cmu_1 (x,\alpha, \beta ) =  \tcmu_1 ( x, \alpha, T_z ( \beta ) ) ,\]
and thus~\eqref{eq:billiards-tmu1} follows from~\eqref{eq:tmu1-expansion}. 
Moreover,  since $ X_{n+1} - X_n  = \tcmu_1 ( X_n, \alpha_n, \beta_n)$ on $\{ X_n > x_1 \}$,  we obtain from~\eqref{eq:tmu1-expansion} that 	
\begin{equation}
\label{eq:X-increment}
\left| X_{n+1} - X_n - 2 (1-\gamma) \tan \beta_n - 2 \gamma (1-\gamma) X_n^{-1} \left[ 1 +\tan^2 \beta_n \right] \right| \leq C X_n^{-2} ,
\end{equation}
for some $C \in \RP$.
Since $\Pr ( \beta_n \in B \mid \cF_n ) = \cK (\alpha_n, B)$, we have
$\Exp [ \tan^k \beta_n \mid \cF_n ] = \rho_k ( \alpha_n)$, 
with $\rho_k$ as defined at~\eqref{eq:rho-def}.
It then follows from~\eqref{eq:X-increment} that
\[
\Exp [ X_{n+1} - X_n \mid \cF_n ] = 2 (1-\gamma) \rho_1 (\alpha_n) +  2 \gamma (1-\gamma) X_n^{-1} ( 1 + \rho_2 (\alpha_2) ) + O (X_n^{-2} ),\]
where the implicit constants in the $O( \, \cdot \,)$ are non-random. This gives~\eqref{eq:billiards-mu1}.
A similar argument, starting from~\eqref{eq:X-increment}, yields~\eqref{eq:billiards-mu2}.
\end{proof}

\subsection{Recurrence classification}

To prove our results from Section~\ref{sec:billiards-results},
we will combine 
Lemmas~\ref{lem:billiards-to-strip-markov}, \ref{lem:billiards-asymptotically-markov} and~\ref{lem:transformed-billiards-lamperti}
  to show that the rescaled billiards process $(X_n , \alpha_n)$ satisfies the conditions of
	the appropriate half-strip results from Section~\ref{sec:strips-results}. 
First we present the proof of Proposition~\ref{prop:billiards-non-critical}.

\begin{proof}[Proof of Proposition~\ref{prop:billiards-non-critical}.]
Under the conditions of Proposition~\ref{prop:billiards-non-critical},
 the process $(X_n,\alpha_n)$
is a half-strip Markov chain for which~\eqref{ass:non-confinement} holds (by Lemma~\ref{lem:billiards-to-strip-markov}\ref{lem:billiards-to-strip-markov-c})
and~\eqref{ass:moments} holds for all $p >1$ and all $q \in (0,1)$ (Lemma~\ref{lem:billiards-to-strip-markov}\ref{lem:billiards-to-strip-markov-a} and~\ref{lem:billiards-to-strip-markov-b}).
Condition~\eqref{ass:kernel} follows from~\eqref{ass:billiards-density} and~\eqref{ass:billiards-invariant}, with the identification $\pii(\ud \alpha) = \pib(\alpha)\ud
  \alpha$.
  Also by Lemma~\ref{lem:billiards-asymptotically-markov} it follows that~\eqref{ass:asymptotically-markov} holds, 
  and by Lemma~\ref{lem:transformed-billiards-lamperti} it follows
  that~$\lim_{x \to \infty} \sup_{\alpha \in S} | \mu_1 (x,\alpha) - d_\alpha | = 0$
  where $d_\alpha := 2 (1-\gamma) \rho_1 (\alpha)$.
	Write $\delta = \delta (\gamma) = \int_S d_\alpha \pib (\alpha)  \ud \alpha = 2 (1-\gamma) \bar \rho_1$,
	by the $k=1$ case of~\eqref{eq:billiards-average-drift}. 
Then Proposition~\ref{prop:strips-crude}
says that 
 the process is transient if $\delta >0$ and  recurrent if $\delta <0$, and the sign of $\delta$ is the same as the sign of $\bar \rho_1$.
\end{proof}

Observe that equations~\eqref{eq:billiards-mu1} and~\eqref{eq:billiards-mu2} show that~\eqref{eq:generalized-lamperti-regime} holds with
\begin{equation}
\label{eq:billiards-generalized-parameters}
  d_\alpha  = 2 (1-\gamma) \rho_1 (\alpha); ~~~ e_\alpha = 2 \gamma (1-\gamma) (1+\rho_2 (\alpha)); ~~~ \sigma_\alpha^2 = 4 (1-\gamma)^2 \rho_2 (\alpha).
\end{equation}
Also~\eqref{eq:billiards-tmu1} shows that~\eqref{ass:detail-drifts} holds with 
\begin{equation}
\label{eq:billiards-lambda}
  \lambda_\alpha (\beta ) = 2 (1-\gamma ) \tan \beta .
\end{equation}

\begin{proof}[Proof of Theorem~\ref{thm:billiards-strict-lamperti}.]
  When $\rho_1 (\alpha ) =0$ for all $\alpha \in S$ we have $d_\alpha \equiv 0$
  and so~\eqref{ass:strict-lamperti} holds.
Then Theorem~\ref{thm:strips-strict-lamperti}
applies;
write $\delta = \delta (\gamma) = \delta_0$ as in~\eqref{eq:deltadef}, so, by~\eqref{eq:billiards-generalized-parameters}, \[
\delta   = \int_S ( 2 e_\alpha - \sigma^2_\alpha ) \pib (\alpha ) \ud \alpha = 
4 (1-\gamma) \left(\gamma (1 + 2 \bar{\rho}_2)- \bar{\rho}_2  \right) .
\]
Theorem~\ref{thm:strips-strict-lamperti} gives
 recurrence if $\delta  <0$ and transience if $\delta >0$, where
 $\delta   <0 $ if $0 < \gamma < \gammacs $
and $\delta  >0$ if $\gammacs < \gamma < 1$,
with $\gammacs $ given in~\eqref{eq:critical-parameter-strict}.
\end{proof}

Moving on to Theorem~\ref{thm:billiards-general-lamperti}, we will
denote by $\psi_\gamma \in \Cb (S)$ a function (whose existence is
guaranteed by Proposition~\ref{prop:fredholm}) such that
\[
  \int_S ( \psi_\gamma (\beta) - \psi_\gamma (\alpha) ) \cK (\alpha , \ud \beta ) = - d_\alpha = - 2 (1-\gamma) \rho_1 (\alpha) .
\]
The function~$\psi_\gamma$ is unique up to translation (see
Proposition~\ref{prop:fredholm}). We may also suppose that $\psi_\gamma$ and
$\psi_0$ are related by $\psi_\gamma = (1-\gamma) \psi_0$.
 
\begin{proof}[Proof of Theorem~\ref{thm:billiards-general-lamperti}.]
  Theorem~\ref{thm:strips-general-lamperti} shows that we have
  recurrence or transience according to the sign of
  $\delta = \delta (\gamma) = \tdelta_0$, as defined at~\eqref{eq:deltadef-general}; 
  by~\eqref{eq:gammadef}, \eqref{eq:billiards-generalized-parameters}, and~\eqref{eq:billiards-lambda},
\begin{align*} \delta   & = 4 \gamma (1-\gamma) ( 1 + \bar \rho_2 ) - 4 (1-\gamma)^2 \bar \rho_2   - 4 \gamma \int_S \int_S \psi_\gamma (\beta) \kappa' (\alpha , \beta ) \pib ( \alpha) \ud \alpha \ud \beta \\
& {} \qquad {} - 4 (1-\gamma ) \int_S \int_S \psi_\gamma (\beta ) \kappa (\alpha, \beta ) \pib (\alpha ) \tan \beta \ud \alpha \ud \beta . \end{align*}
Moreover, $\delta$ is invariant under translation of $\psi_\gamma$.
Using the fact that $\pib$ is invariant to simplify the last term, and $\psi_\gamma = (1-\gamma) \psi_0$, we get
\begin{align*} \delta  & = 4 (1-\gamma) ( \gamma + (2\gamma -1) \bar \rho_2 )  - 4 \gamma (1-\gamma)  \int_S \int_S \psi_0 (\beta) \kappa' (\alpha , \beta ) \pib ( \alpha) \ud \alpha \ud \beta \\
& {} \qquad {} - 4 (1-\gamma )^2   \int_S \psi_0 (\beta ) \pib (\beta ) \tan \beta \ud \beta . \end{align*}
Thus, with~\eqref{eq:pi-derivative} and  the definitions of $A_1, A_2$ at~\eqref{eq:A-def}, we get
\begin{equation}
\nonumber
\label{eq:billiards-delta-general}
  \delta  = 4 (1-\gamma)  \left[ ( \gamma + (2\gamma -1) \bar \rho_2 ) - (1-\gamma) A_1 - \gamma A_2 \right] .
\end{equation}
It follows from~\eqref{eq:billiards-delta-general} that, for $\gamma \in (0,1)$, the sign of $\delta$ is the same as that of
$c(\gamma)$, where
\begin{equation}
\label{eq:c-gamma-def}
c ( \gamma )   := \gamma \left( 1  + A_1 - A_2 + 2\bar \rho_2 \right)  - A_1 - \bar \rho_2 , \text{ for } 0 < \gamma < 1.
\end{equation}
Theorem~\ref{thm:strips-general-lamperti} then shows that $\zeta$ is transient if $c(\gamma) >0$ and recurrent if $c(\gamma) < 0$.

A consequence of the fact that~$\tdelta_\theta$ as defined at~\eqref{eq:deltadef-general} is non-decreasing in~$\theta$ (see Theorem~\ref{thm:strips-general-lamperti}) is that $A_1 + \bar \rho_2 \geq 0$.
Under the hypothesis~\eqref{eq:A-conditions}, 
the function $\gamma \mapsto c(\gamma)$
given by~\eqref{eq:c-gamma-def} is non-decreasing with $c(0) \leq 0$, and  
$\gammag \in [0,1]$ given by~\eqref{eq:critical-parameter-general} is well defined (see Remark~\ref{rem:billiards-general}).
If $1+A_1-A_2+2\bar\rho_2 =0$, then, by~\eqref{eq:A-conditions},
$A_1 +\bar\rho_2 >0$ and $\gammag = 1$, while $c(\gamma) = - A_1 -\bar\rho_2 <0$ (recurrence) for all $0 < \gamma < 1=\gammag$.  
If $1+A_1-A_2+2\bar\rho_2  > 0$,
then $c(\gamma)$ is strictly increasing, and has the property that $c(\gamma) <0$ if $\gamma < \gammag$ and $c(\gamma)>0$ if $\gamma > \gammag$. This completes the proof of the recurrence classification.

The expression for~$\psi_0$ in~\eqref{eq:psi-0-billiards}, under the hypothesis on the total-variation convergence of $\cK^n$,
is a
consequence
 of~\eqref{eq:psi-series}, \eqref{eq:rho-def},
and~\eqref{eq:billiards-generalized-parameters}.
\end{proof}
	 
 \begin{proof}[Proof of Proposition~\ref{prop:example-family}.]
 If~\eqref{eq:rho1-condition} holds then  
we claim that a solution $\psi_0 \in \Cb(S)$ to $\int_S ( \psi_0 (\beta) - \psi_0 (\alpha) ) \kappa (\alpha, \beta ) \ud \beta   = - 2 \rho_1 (\alpha)$ is given by
\begin{equation}
\label{eq:explicit-psi0}
\psi_0 (\beta) = \frac{2\lambda}{1-\lambda} \tan \beta , ~ \text{for}~ \beta \in S.
\end{equation}
Indeed, 
with the choice for $\psi_0$ given by~\eqref{eq:explicit-psi0}, 
by~\eqref{eq:rho1-condition} it follows that
\begin{align*}
\int_S ( \psi_0 (\beta) - \psi_0 (\alpha) ) \kappa (\alpha, \beta ) \ud \beta & = \frac{2\lambda}{1 - \lambda} \int_S \kappa (\alpha , \beta )\tan \beta \ud \beta
- \frac{2\lambda}{1-\lambda} \tan \alpha  \\
& = \frac{2\lambda}{1 - \lambda} \rho_1 (\alpha ) -  \frac{2\lambda}{1 - \lambda}\tan \alpha
=   - 2\rho_1 (\alpha), 
\end{align*}
as required. By uniqueness of~$\psi_0$ up to translation, we may suppose that~$\psi_0$ is given by~\eqref{eq:explicit-psi0}. 
Recall the definitions of $A_1, A_2$ from~\eqref{eq:A-def}. 
With $\psi_0$ given by~\eqref{eq:explicit-psi0}, we have 
\[  A_1 = \frac{2\lambda}{1 - \lambda} \int_S \varpi (\beta) \tan^2 \beta \ud \beta  = \frac{2\lambda}{1 - \lambda} \bar \rho_2 .
\]
      Now to compute $A_2$ observe first that the 
 function $\psi_0$ given by~\eqref{eq:explicit-psi0} is differentiable on~$S_0$,
with derivative $\psi'_0 (\beta ) = \frac{2\lambda}{1 - \lambda} ( 1 + \tan^2 \beta)$. Under~\eqref{ass:density-smoothness}, the density $\varpi$ is also differentiable
with derivative given by~\eqref{eq:pi-derivative}.
Also, since $\psi_0 (-\beta) \varpi (-\beta ) = -\psi_0 (\beta ) \varpi (\beta )$ for all $\beta \in S_0$,
we have that $h (\beta ) := \psi_0 (\beta) \varpi (\beta)$ has $h (-\beta) =- h (\beta)$.
Thus 
\[
\int_{S_0} \left[ \psi_0 (\beta) \varpi' (\beta) + \psi_0' (\beta) \varpi (\beta) \right] \ud \beta = \int_{S_0} h'(\beta)\ud \beta = 2h(\theta_0) =0,
\]
since $\pib ( \theta_0 ) = 0$, by the comment after~\eqref{eq:pi-derivative}. The above computation implies that
\begin{align*}
A_2 & = \int_{S_0} \psi_0 (\beta) \varpi' (\beta) \ud \beta =  - \int_{S_0} \psi_0' (\beta) \varpi (\beta) \ud \beta  \\
& =  - \frac{2\lambda}{1 - \lambda} \int_{S_0} \left[  1 + \tan^2 \beta \right] \varpi (\beta) \ud \beta   = -\frac{2\lambda}{1-\lambda} \left[ 1 + \bar \rho_2 \right].
\end{align*}
Since $\lambda \in(-1,1)$,
      $A_1 + \bar{\rho}_2 = \frac{1 + \lambda}{1 - \lambda} \bar{\rho}_2>0$,
      and $1 + A_1 -A_2 +2\bar \rho_2 = \frac{1 + \lambda}{1 - \lambda} ( 1 +2\bar\rho_2) >0$,
      which means that condition~\eqref{eq:A-conditions} holds, and,
      moreover, that $\gammag$ defined by~\eqref{eq:critical-parameter-general} is given by
$\gammag = \frac{\bar\rho_2}{1+2\bar\rho_2} = \gammacs$,
as given by~\eqref{eq:critical-parameter-strict}. The result now follows from Theorem~\ref{thm:billiards-general-lamperti}. \end{proof}

\appendix
 
\section{Kernels, operators, and Fredholm theory}
\label{sec:fredholm}

As in Section~\ref{sec:strips}, let $(S,d_S)$ be a compact metric space,
$\cB(S)$ its Borel $\sigma$-algebra, and $\cK : S \times \cB(S) \to [0,1]$ a Markov kernel on $S$. 
Recall that $\Mb(S)$
is the set of bounded measurable functions on $S$, and 
 $\Cb(S)$ the continuous functions on $S$. We endow $\Cb(S)$ with the supremum norm $\| f \| := \sup_{u \in S} | f(u) |$,
so $\Cb(S)$ is a Banach space. 
The kernel $\cK$ is associated with a functional $T_\cK : \Mb (S) \to \Mb(S)$ whose operation is defined by
\begin{equation}
\nonumber
\label{eq:operator-def}
  ( T_\cK f ) (u) = \int \cK (u , \ud v ) f (v) , ~\text{for all}~u \in S .
\end{equation}
The \emph{Feller property} is that $f \in \Cb(S)$ implies $T_\cK f \in \Cb (S)$~\cite[\S 12.1]{dmps}.
The Feller property does not hold in general, but it does under assumption~\eqref{ass:kernel}\ref{ass:kernel-ii},
which implies the stronger fact that $T_\cK f \in \Cb (S)$ for all $f \in \Mb (S)$: see Lemma~\ref{lem:continuity-against-K}.  
We also note that $\| T_\cK f \| \leq \| f \|$ for all~$f$.
Thus $T_\cK$ defines a continuous linear operator $T_\cK: \Cb(S) \to \Cb (S)$~\cite[p.~127]{ka}.

Consider for $f, g \in \Cb (S)$ the \emph{Poisson equation} 
\begin{align}
\label{eq:fredholm1}
f - T_\cK f & = g .\end{align}
Recall that if $\cK$ satisfies~\eqref{ass:kernel}\ref{ass:kernel-i}, then there is a unique invariant probability measure $\pii \in \cP(S)$;
recall the definition of $\Cb^0 (S)$ from~\eqref{eq:pi-kernel}.
The main result of this section is Proposition~\ref{prop:fredholm} below.
We will employ Proposition~\ref{prop:fredholm} in two  
ways in the proofs of our results on the half-strip model:
first, to establish existence of Lyapunov functions with appropriate properties
to conduct the proofs for the strict Lamperti regime,
as described in Section~\ref{sec:strict-lamperti-proofs},
and second,
to construct a transformation mapping the general Lamperti case
into the strict Lamperti case,
as described in Section~\ref{sec:general-lamperti-proofs}.

\begin{proposition}
\label{prop:fredholm}
Let $(S,d_S)$ be a compact metric space, $\cB(S)$ its Borel $\sigma$-algebra, and $\cK : S \times \cB(S) \to [0,1]$ a Markov kernel
satisfying~\eqref{ass:kernel}. 
Then there exists a continuous linear operator $F : \Cb^0 (S) \to \Cb^0 (S)$
such that for every $g \in \Cb^0 (S)$ there is a unique $f = F(g) \in \Cb^0 (S)$ that solves~\eqref{eq:fredholm1}.
\end{proposition}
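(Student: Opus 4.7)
The plan is to deduce the proposition from the Fredholm alternative for compact operators on the Banach space $\Cb(S)$, applied to $I - T_\cK$. Concretely, I will verify that $T_\cK$ is a compact operator, identify $\ker(I - T_\cK)$ with the constants and $\mathrm{ran}(I - T_\cK)$ with $\Cb^0(S)$, and then use the open mapping theorem to produce the continuous inverse $F$.

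First I would show that $T_\cK : \Cb(S) \to \Cb(S)$ is compact. For any $f$ in the closed unit ball of $\Cb(S)$ and any $u, u' \in S$,
\begin{equation*}
|T_\cK f(u) - T_\cK f(u')| = \left| \int_S f(v) [\cK(u, \ud v) - \cK(u', \ud v)] \right| \leq \|f\| \cdot \|\cK(u, \cdot) - \cK(u', \cdot)\|_\tv,
\end{equation*}
so the continuity of $u \mapsto \cK(u, \cdot)$ in the $\tv$-metric from \eqref{ass:kernel}(ii), upgraded to uniform continuity by compactness of $S$, yields equicontinuity of the family $\{T_\cK f : \|f\| \leq 1\}$. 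Equiboundedness is immediate from $\|T_\cK f\| \leq \|f\|$, and Arzel\`a--Ascoli then gives the precompactness required for compactness of $T_\cK$. Consequently, $I - T_\cK$ is Fredholm of index zero: $\ker(I - T_\cK)$ is finite-dimensional, $\mathrm{ran}(I - T_\cK)$ is closed, and $\dim \ker(I - T_\cK)$ equals the codimension of $\mathrm{ran}(I - T_\cK)$ in $\Cb(S)$.

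The main obstacle is showing that $\ker(I - T_\cK)$ consists only of the constant functions. Suppose, for contradiction, that $h \in \Cb(S)$ satisfies $T_\cK h = h$ and is non-constant; rescale so that $\min h = 0 < 1 = \max h$, both attained by compactness of $S$. Set $A_+ := \{u \in S : h(u) = 1\}$, a non-empty closed subset of $S$. For $u \in A_+$, $\int_S (1 - h(v)) \cK(u, \ud v) = 1 - h(u) = 0$, and since $1 - h \geq 0$ is continuous, this forces $\cK(u, A_+) = 1$; iterating gives $\cK^n(u_0, A_+) = 1$ for all $n \in \ZP$ and all $u_0 \in A_+$. Under \eqref{ass:kernel}(ii), $T_\cK$ enjoys the Feller property, so by the Krylov--Bogolyubov argument on the compact space $S$, any weak limit point of the Ces\`aro averages $\mu_N := N^{-1}\sum_{n=0}^{N-1} \cK^n(u_0, \cdot)$ is a $\cK$-invariant probability measure; since $\mu_N(A_+) = 1$ for every $N \geq 1$ and $A_+$ is closed, the Portmanteau theorem shows any such limit is supported in $A_+$. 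Repeating the construction with $A_- := \{h = 0\}$ yields a second invariant probability measure supported in $A_-$, and since $A_+ \cap A_- = \emptyset$, these are distinct, contradicting \eqref{ass:kernel}(i).

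With $\ker(I - T_\cK) = \R \cdot 1$ one-dimensional, Fredholm theory gives that $\mathrm{ran}(I - T_\cK)$ has codimension one in $\Cb(S)$. Integrating $f - T_\cK f = g$ against $\pii$ and using its invariance yields $\mathrm{ran}(I - T_\cK) \subseteq \Cb^0(S)$; since $\Cb^0(S)$ is itself the kernel of the continuous functional $g \mapsto \int_S g(u) \pii(\ud u)$ and so has codimension one in $\Cb(S)$, the inclusion is an equality. The restricted map $I - T_\cK : \Cb^0(S) \to \Cb^0(S)$ is then a continuous linear bijection between Banach spaces (its kernel meets $\Cb^0(S)$ only at $0$, since $\pii$ is a probability measure), and the open mapping theorem supplies a continuous inverse $F : \Cb^0(S) \to \Cb^0(S)$, which is the desired solution operator for the Poisson equation.
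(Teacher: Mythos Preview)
Your proof is correct and shares the overall Fredholm-alternative skeleton with the paper, but the two differ at the key step of computing $\ker(I-T_\cK)$. The paper works on the adjoint side: it shows $\ker(I-T_\cK^*)=\{\rho\pii:\rho\in\R\}$ by taking a signed invariant measure $\nu$, applying the Hahn--Jordan decomposition, observing that $\nu^+$ and $\nu^-$ are separately invariant, and then invoking uniqueness of $\pii$. The equality $\dim\ker(I-T_\cK)=\dim\ker(I-T_\cK^*)=1$ and the annihilator description of the range then come directly from the Fredholm lemma. You instead work on the operator side with a maximum-principle argument: a non-constant $T_\cK$-harmonic function would trap the chain in disjoint closed level sets, and Krylov--Bogolyubov produces two distinct invariant probabilities, contradicting~\eqref{ass:kernel}(i). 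Your route is more probabilistic in flavour and avoids the signed-measure machinery; the paper's route is slightly shorter once the adjoint Fredholm statement is on the table. For continuity of $F$, the paper cites a closed-range bound from Dunford--Schwartz, whereas your appeal to the open mapping theorem on the restriction $I-T_\cK:\Cb^0(S)\to\Cb^0(S)$ is cleaner and self-contained.
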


We establish Proposition~\ref{prop:fredholm} by  the Fredholm alternative theorem 
for linear operators. First we collect some necessary concepts and notation.
The linear dual space to $\Cb(S)$ is the Banach space $L(S)$ of continuous linear functionals from $\Cb (S) \to \R$,
  endowed with the induced (operator) norm $\| \phi \| := \sup_{ \| f \| \leq 1 } | \phi ( f) |$. 
By the Riesz representation theorem~\cite[p.~265]{ds},
$L(S)$ can be identified isometrically with $\Meass (S)$, the space of finite signed Borel measures on~$S$,
 with total variation norm 
\begin{equation}
  \label{eq:tv} \| \nu \|_\tv = \sup \left\{ \int_S f ( u) \ud \nu (u) : f \in \Cb(S), \, \| f \| \leq 1 \right\} ,
\end{equation}
since a 
continuous linear functional $\phi \in L(S)$ corresponds to a unique finite signed Borel measure $\nu$,
via $\phi (f) = \int_S f( u) \nu (\ud u)$ over all $f\in \Cb (S)$.

The adjoint operator $\Ts_\cK$ to $T_\cK$, acts as $\Ts_\cK : L (S) \to L (S)$ via
$\Ts_\cK \phi  = \phi T_\cK$, or, equivalently, as $\Ts_\cK : \Meass (S) \to \Meass (S)$ via 
\begin{equation}
  \label{eq:T-adjoint} ( \Ts_\cK \nu ) (B) := \int_S \nu (\ud u ) \cK (u, B) , ~\text{for all}~B \in \cB(S) .
\end{equation}
In particular, 
$\Ts_\cK$ restricts to a functional given by~\eqref{eq:T-adjoint} on the metric space $(\cP(S), d_\tv)$,
where $d_\tv ( \mu,\nu ) = \tfrac{1}{2} \| \mu - \nu \|_\tv$
is the total variation distance.

A linear operator between two Banach spaces is \emph{compact} if it maps
bounded sets into relatively compact sets. 
The following lemma is essentially given in~\cite[pp.~36--37]{revuz}; we include a short proof here for completeness.

\begin{lemma}
If~\eqref{ass:kernel}\ref{ass:kernel-ii} holds, then the operator $T_\cK : \Cb(S) \to \Cb (S)$ is compact.
\end{lemma}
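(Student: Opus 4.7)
The plan is to establish compactness of $T_\cK$ via the Arzel\`a--Ascoli theorem: since $S$ is compact, a subset of $\Cb(S)$ is relatively compact (in the supremum norm) if and only if it is bounded and equicontinuous. So it suffices to fix an arbitrary bounded set $\cA \subset \Cb(S)$, say with $\sup_{f \in \cA} \| f \| \leq M$, and show that $T_\cK \cA$ is bounded and equicontinuous.

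First I would note boundedness of $T_\cK \cA$: for any $f \in \cA$ and any $u \in S$, $|(T_\cK f)(u)| \leq \int_S | f(v) | \cK (u, \ud v) \leq \| f \| \leq M$, so $\| T_\cK f \| \leq M$ for all $f \in \cA$. For equicontinuity, I would estimate, for $u, u' \in S$ and $f \in \cA$,
\begin{equation}
\left| (T_\cK f)(u) - (T_\cK f)(u') \right| = \left| \int_S f(v) \left[ \cK (u, \ud v) - \cK (u', \ud v) \right] \right| \leq \| f \| \cdot \| \cK (u, \, \cdot \,) - \cK (u' , \, \cdot \,) \|_\tv ,
\end{equation}
using the representation of the total variation norm recalled at~\eqref{eq:tv}. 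Hence the modulus of continuity of $T_\cK f$ is dominated by $M$ times that of $u \mapsto \cK (u, \, \cdot \,)$, uniformly in $f \in \cA$.

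Now I invoke~\eqref{ass:kernel}(ii): the map $u \mapsto \cK ( u, \, \cdot \,)$ from $(S, d_S)$ to $(\cP(S), d_\tv)$ is continuous, and since $(S, d_S)$ is compact, it is uniformly continuous. So for every $\eps > 0$ there exists $\delta > 0$ such that $d_S (u, u') < \delta$ implies $\| \cK (u, \, \cdot \,) - \cK (u', \, \cdot \,) \|_\tv < \eps / (M+1)$, and hence $| (T_\cK f)(u) - (T_\cK f)(u')| < \eps$ for every $f \in \cA$. This is the required equicontinuity, and it simultaneously confirms that each $T_\cK f$ is itself continuous (as we should check to know $T_\cK \cA \subset \Cb(S)$ in the first place).

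Applying Arzel\`a--Ascoli then yields that $T_\cK \cA$ has compact closure in $\Cb(S)$, which is the definition of $T_\cK$ being compact. There is no serious obstacle here: the only delicate point is recognising that~\eqref{ass:kernel}(ii) delivers both the uniform modulus needed for equicontinuity and the Feller-type continuity of $T_\cK f$, via the variational representation of~$\| \, \cdot \, \|_\tv$.
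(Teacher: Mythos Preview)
Your proof is correct and follows essentially the same route as the paper's: both bound $|(T_\cK f)(u) - (T_\cK f)(u')|$ by $\|f\|\cdot\|\cK(u,\cdot)-\cK(u',\cdot)\|_\tv$ via the variational formula~\eqref{eq:tv}, then invoke~\eqref{ass:kernel}(ii) for equicontinuity and apply Arzel\`a--Ascoli. The only cosmetic difference is that you explicitly upgrade continuity to uniform continuity using compactness of~$S$, whereas the paper states the (pointwise) equicontinuity directly.
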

\begin{proof}
Let $B_r = \{ f \in \Cb(S) : \| f \| \leq r \} \subset \Cb(S)$.
It suffices to prove that $T_\cK B_r = \{T_\cK f : f \in B_r\} \subseteq B_r$ is relatively compact.
For $f \in \Cb(S)$ and $u, v \in S$ we can write
\begin{equation}
\label{eq:T-difference}
  T_\cK f   (u) -   T_\cK f  (v) = \int_S f (z) L_{u,v} (\ud z ) ,
\end{equation}
where $L_{u,v} \in \Meass (S)$ is the signed measure defined by
$L_{u,v} (B) = \cK (u, B) - \cK (v,B)$ for $B \in \cB(S)$. 
It then follows from~\eqref{eq:tv} and~\eqref{eq:T-difference} that
\begin{equation}
\label{eq:equicontinuity}
\left| T_\cK f  (u) -   T_\cK f  (v) \right| \leq r \cdot \| L_{u,v} \|_\tv , \text{ for all } f \in B_r .
\end{equation}
Let $u \in S$. By~\eqref{ass:kernel}\ref{ass:kernel-ii}, for any $\eps >0$ there exists $\delta >0$
such that $\| L_{u,v} \|_\tv < \eps$ for all $v \in S$ with $|u-v| < \delta$. Thus~\eqref{eq:equicontinuity}
shows that the collection of functions~$T_\cK B_r$ is equicontinuous.
Furthermore, $\norm{T_\cK f}\leq \norm{f}$. 
Hence the Arzel\`a--Ascoli theorem~\cite[p.~266]{ds}
shows that $T B_r$ is relatively compact.
\end{proof}

Now we can complete the proof of Proposition~\ref{prop:fredholm}. 
Let $T$ be a compact operator on a Banach space $\cX$ and $T^*$ its adjoint on the dual space $\cX^*$;
in both spaces we denote by~$I$ the identity operator.
For a set $C \subseteq \cX^*$ let $C^{{a}} := \{ x \in \cX :  \phi (x) = 0 \text{ for all } \phi \in C \}$,
the annihilator of~$C$.
We write `$\ker$' and `$\Ima$' for kernel and range, respectively.
We will use the following result,
which can be found e.g.~in~\cite[pp.~609--610]{ds} or~\cite[p.~369]{ka}.

\begin{lemma}[Fredholm alternative]
\label{lem:fredholm}
Let $T$ be a compact operator on a Banach space $\cX$ and $T^*$ its adjoint on the dual space $\cX^*$.
Fix a scalar $\lambda$.
Then 
\[
\dim \ker ( \lambda I - T ) = \dim \ker ( \lambda I - T^* ),
\]
and
\[ \Ima  ( \lambda I - T ) = ( \ker ( \lambda I - T^* ) )^{{a}}. \]
Moreover, for any $y \in ( \ker ( \lambda I - T^* ) )^{{a}}$, 
  the set of all solutions $x \in \cX$ with $( \lambda I - T ) x = y$
is equal to $\{ x_0 + z : 
z \in \ker ( \lambda I - T ) \}$ for any particular solution~$x_0$.
\end{lemma}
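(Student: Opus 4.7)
The plan is to prove the classical Riesz--Schauder theorem via four steps: (a) finite-dimensionality of $\ker(\lambda I - T)$, (b) closedness of $\Ima(\lambda I - T)$, (c) the duality identity $\Ima(\lambda I - T) = (\ker(\lambda I - T^*))^{{a}}$, and (d) the dimension equality. As a preliminary reduction, since $\lambda^{-1} T$ is compact when $\lambda \neq 0$, it suffices to treat the operator $A = I - T$; if $\lambda = 0$, then $\lambda I - T = -T$ and the claims are either trivial or vacuous in the degenerate sense that both dimensions may be infinite.

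For (a), on $\ker(\lambda I - T)$ the operator $T$ acts as $\lambda I$, so the closed unit ball of this subspace is $\lambda^{-1}$ times the image of that bounded set under $T$, hence relatively compact by compactness of $T$. F.~Riesz's local compactness lemma then forces finite-dimensionality; the same argument applies to $\ker(\lambda I - T^*)$ once one invokes Schauder's theorem to conclude $T^*$ is compact. For (b), given $y_n = (\lambda I - T) x_n \to y$, I replace each $x_n$ by an element minimising the distance to $\ker(\lambda I - T)$ (attained thanks to (a)), and argue the resulting sequence is bounded: if not, normalising and extracting a subsequence along which $T x_n / \| x_n \|$ converges by compactness produces a limit that contradicts the distance-minimising choice. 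Compactness of $T$ then extracts a convergent subsequence of $T x_n$, whence $\lambda x_n = T x_n + y_n + o(1)$ also converges, and the limit $x$ solves $(\lambda I - T) x = y$. For (c), the forward inclusion is by direct duality: $\phi((\lambda I - T) x) = ((\lambda I - T^*) \phi)(x)$ vanishes when $\phi \in \ker(\lambda I - T^*)$. The reverse inclusion uses (b): if $y \notin \Ima(\lambda I - T)$, which is a closed subspace by (b), Hahn--Banach yields $\phi \in \cX^*$ with $\phi \equiv 0$ on $\Ima(\lambda I - T)$ and $\phi(y) \neq 0$; the first property translates to $(\lambda I - T^*)\phi = 0$, contradicting $y \in (\ker(\lambda I - T^*))^{{a}}$.

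The main obstacle is (d), the dimension equality, which is where the Fredholm theorem has real content beyond general Banach space duality. I would follow the standard route of analysing the ascending chain $N_k := \ker((\lambda I - T)^k)$ and descending chain $R_k := \Ima((\lambda I - T)^k)$: both must stabilise at some finite index $p$, since a strict inclusion at every stage would, via Riesz's lemma, produce an almost-orthogonal sequence contradicting compactness of $T$. One then establishes the direct-sum decomposition $\cX = N_p \oplus R_p$ into $(\lambda I - T)$-invariant subspaces on which the operator is nilpotent and invertible, respectively. Restriction to the finite-dimensional $N_p$ gives, by rank-nullity, that $\dim \ker(\lambda I - T) = \dim (\cX / \Ima(\lambda I - T))$; the identification of $(\cX / \Ima(\lambda I - T))^*$ with $(\Ima(\lambda I - T))^\perp$, combined with (c) applied to $T^*$ (or directly), shows this dimension equals $\dim \ker(\lambda I - T^*)$. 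The final ``moreover'' claim is immediate linear algebra: $(\lambda I - T) x = y = (\lambda I - T) x_0$ iff $x - x_0 \in \ker(\lambda I - T)$.
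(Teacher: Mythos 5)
The paper does not prove this lemma at all: it is quoted as a standard result with references to Dunford--Schwartz and Kantorovich--Akilov, and is only ever invoked with $\lambda = 1$. So there is no ``paper proof'' to match; what you have written is the classical Riesz--Schauder argument, and in outline it is correct. Your steps (a)--(c) are complete and sound: local compactness of the unit ball of $\ker(\lambda I - T)$ via $x = \lambda^{-1}Tx$, Schauder's theorem for the adjoint, the distance-minimising normalisation to get closedness of the range, and the Hahn--Banach separation for the reverse inclusion in the duality identity (the forward inclusion being the trivial computation $\phi((\lambda I - T)x) = ((\lambda I - T^*)\phi)(x)$). Step (d) is the one place where you assert rather than prove the substantive facts --- that ascent and descent are finite \emph{and equal}, and that $\cX = N_p \oplus R_p$ --- but the route you indicate (Riesz's lemma applied to a strictly increasing chain to contradict compactness, then rank--nullity on the finite-dimensional nilpotent part and the identification $(\cX/\Ima(\lambda I - T))^* \cong (\Ima(\lambda I - T))^\perp = \ker(\lambda I - T^*)$) is exactly the standard one and each gap is routine. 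Two small remarks: first, your dismissal of $\lambda = 0$ as ``trivial or vacuous'' is not right --- for a compact $T$ with non-closed range one has $(\ker T^*)^{a} = \overline{\Ima T} \supsetneq \Ima T$, so the range identity genuinely fails at $\lambda = 0$; the lemma as stated in the paper should carry the hypothesis $\lambda \neq 0$ (harmless here, since only $\lambda = 1$ is used). Second, in (b) it is worth saying explicitly that the distance to $\ker(\lambda I - T)$ is attained \emph{because} that subspace is finite-dimensional by (a); you gesture at this and it is the only reason the normalisation argument closes.
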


\begin{proof}[Proof of Proposition~\ref{prop:fredholm}.]
In~\eqref{ass:kernel}\ref{ass:kernel-i}
we have assumed uniqueness of solutions to $\Ts_\cK \nu = \nu$ over $\cP(S)$; we claim that 
this implies that
\begin{equation}
\label{eq:claim}
\text{every solution to $\Ts_\cK \nu = \nu$ over
 $\Meass (S)$ has $\nu = \rho \pii$ for some $\rho \in \R$.}
\end{equation}
To prove~\eqref{eq:claim},  we use a decomposition argument.
If $\nu  = \Ts_\cK \nu$ for $\nu  \in \Meass (S)$, the Hahn--Jordan decomposition of~$\nu$
is $\nu = \nu^+ - \nu^-$ for two finite measures $\nu^+, \nu^-$,
and $\nu^+ = \Ts_\cK \nu^+$ and
$\nu^- = \Ts_\cK \nu^-$
too~\cite[p.~17]{dmps}.
By assumption, 
 $\nu = \Ts_\cK \nu$ has a unique  solution $\nu = \pii \in \cP(S)$,
which means that every  $\nu  \in \Meass (S)$ for which $\nu  = \Ts_\cK \nu$
has $\nu^+ = \rho_+ \pii$ and $\nu^- = \rho_- \pii$ for $\rho_+, \rho_- \in \RP$.
Thus $\nu = (\rho_+ - \rho_-) \pii = \rho \pii$, $\rho \in \R$, verifying~\eqref{eq:claim}.

Now Lemma~\ref{lem:fredholm} with $\lambda =1$ together with~\eqref{eq:claim} shows that $\ker ( I - \Ts_\cK ) = \{ \rho \pii : \rho \in \R \}$
so both $\ker ( I - \Ts_\cK)$ and $\ker (I - T_\cK)$ are one-dimensional. Hence
$\ker (I - T_\cK)$ consists of only the constant functions.
In addition, by the definition of $\Cb^0(S)$ at~\eqref{eq:pi-kernel},
\begin{equation}
\label{eq:im-ker}
\Ima  (   I - T_\cK ) = ( \ker (   I - T_\cK^* ) )^{{a}} = \Cb^0 (S) .
\end{equation}
Thus~\eqref{eq:fredholm1} has a solution $f \in \Cb(S)$ 
for a given $g \in \Cb (S)$ if and only if $g \in \Cb^0 (S)$.

Moreover,
given $g \in \Cb^0 (S)$, 
 the set of all solutions to~\eqref{eq:fredholm1} is $\{ f + c, c \in \R \}$, where
 $f  \in \Cb(S)$  is any solution to~\eqref{eq:fredholm1}. It follows that for $g \in \Cb^0 (S)$
there is a unique $f \in \Cb^0 (S)$ that solves~\eqref{eq:fredholm1}.
Thus we may define $F : \Cb^0 (S) \to \Cb^0 (S)$ by $F (g) = f$ satisfying~\eqref{eq:fredholm1}.
It is easy to see that $F$ is linear. It remains to prove that $F$ is continuous.

Consider $U = I - T_\cK$. Then~\eqref{eq:im-ker}
says that the range of $U$
is~$\Cb^0(S)$. The set~$\Cb^0(S)$ is closed in $\Cb (S)$. To see
this, take $g_n \in \Cb^0 (S)$ with $\lim_{n \to \infty} g_n = g\in \Cb (S)$; then
$\int_S g (u) \pii ( \ud u)  = \lim_{n \to \infty} \int_S g_n (u) \pii (\ud u) = 0$, by the bounded
convergence theorem. Since~$U$ has a closed range,
there exists a constant $K < \infty$ such that
for every $g \in \Cb^0(S)$, we can find $h \in \Cb (S)$ with $ U h = g$ and $\| h \| \leq K \| g \|$~\cite[p.~487]{ds}. 
But if $F (g) = f \in \Cb^0(S)$, then $U f = g$ and since solutions to~\eqref{eq:fredholm1} are related
by additive constants, we must have $f = h - c$ where $c = \int_S h(u) \pii ( \ud u)$.
Hence
\[  \| F (g) \| = \| h - c \| \leq 2 \| h \| \leq 2 K \| g \|, ~ \text{for all}~ g \in \Cb^0 (S) .\]
Thus $F$~is bounded, and hence continuous~\cite[p.~127]{ka}.
\end{proof}

We will also use the following simple continuity result.

\begin{lemma}
\label{lem:continuity-against-K}
Let $(S,d_S)$ be a compact metric space. 
Suppose that $\cL : (S,d_S) \to (\Meass (S), d_{\tv} )$ is continuous,
and that $g_u \in \Mb (S)$, $u \in S$,
is a family of functions with $u \mapsto g_u$ continuous. For $u \in S$, define 
$G ( u ) = \int_S \cL (u, \ud v) g_u (v)$. Then $G \in \Cb (S)$.
\end{lemma}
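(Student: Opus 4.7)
The plan is to prove continuity of $G$ via a standard triangle-inequality decomposition that separates variation of $\cL$ in~$u$ from variation of the integrand~$g_u$ in~$u$. Write
\begin{align*}
G(u) - G(u')
&= \int_S g_u(v) \cL(u,\ud v) - \int_S g_{u'}(v) \cL(u',\ud v) \\
&= \int_S g_u(v) \bigl[ \cL(u,\ud v) - \cL(u',\ud v) \bigr] + \int_S \bigl[ g_u(v) - g_{u'}(v) \bigr] \cL(u',\ud v).
\end{align*}
I would bound the two terms separately using the standard fact (see the definition of the total variation norm at~\eqref{eq:tv}) that $|\int f \, \ud \nu| \leq \|f\|_\infty \, \|\nu\|_\tv$ for $f \in \Mb(S)$ and $\nu \in \Meass(S)$.

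Before doing so, I would first establish two uniform boundedness statements that follow from compactness of~$S$ together with the stated continuity hypotheses. Since $u \mapsto g_u$ is continuous from the compact space $(S,d_S)$ into $(\Mb(S), d_\infty)$, its image is compact, hence bounded; set $M := \sup_{u \in S} \|g_u\|_\infty < \infty$. Similarly, continuity of $\cL : (S,d_S) \to (\Meass(S), d_\tv)$ together with compactness of~$S$ gives $N := \sup_{u \in S} \|\cL(u, \, \cdot \,)\|_\tv < \infty$.

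With these in hand, the first term is bounded by $M \, \|\cL(u, \, \cdot \,) - \cL(u', \, \cdot \,)\|_\tv$, which tends to zero as $u' \to u$ by continuity of $\cL$. The second term is bounded by $\|g_u - g_{u'}\|_\infty \cdot N$, which tends to zero as $u' \to u$ by continuity of $u \mapsto g_u$. Hence $|G(u) - G(u')| \to 0$ as $u' \to u$, establishing continuity of~$G$.

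I do not anticipate a real obstacle here: the only points requiring a little care are (i) extracting uniform boundedness of $\|g_u\|_\infty$ and $\|\cL(u, \, \cdot \,)\|_\tv$ from the pointwise continuity statements (which is where compactness of~$S$ is used) and (ii) noting that $g_u$ measurable and $\cL(u, \, \cdot \,)$ a finite signed measure is enough for the integral defining $G(u)$ to make sense. Boundedness of~$G$, and thus $G \in \Cb(S)$, then follows automatically from $|G(u)| \leq M N$.
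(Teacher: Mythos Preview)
Your proof is correct and essentially identical to the paper's own argument: the same triangle-inequality decomposition separating variation in $\cL$ from variation in $g_u$, the same use of compactness of $S$ to get uniform bounds $\sup_u \|g_u\| < \infty$ and $\sup_u \|\cL(u,\,\cdot\,)\|_\tv < \infty$, and the same conclusion via $|\int f\,\ud\nu| \leq \|f\|\,\|\nu\|_\tv$.
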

\begin{proof}
Since $u \mapsto \| g_u \|$ is continuous and $S$ is compact, $\sup_{u \in S} \| g_u \| < \infty$.
 Similarly, since $u \mapsto \| \cL ( u, \, \cdot \, ) \|_\tv$
is continuous, $\sup_{u \in S}  \| \cL ( u, \, \cdot \, ) \|_\tv < \infty$ also. 
Hence $G$ is bounded. 
Define $L_{u,u'} \in \Meass (S)$ by
$L_{u,u'} (B) := \cL (u, B) - \cL (u', B)$, $B \in \cB(S)$. Then 
\begin{align*}
  | G (u) - G (u') | & =   \left| \int_S L_{u,u'} ( \ud v) g_u (v) 
- \int_S \cL (u',  \ud v) ( g_{u'} ( v) - g_u (v) ) \right| \\
& \leq \| g_u \| \cdot \| L_{u,u'} \|_\tv + \| \cL (u', \, \cdot \, ) \|_\tv \cdot \| g_{u'} - g_u \| ,
\end{align*}
which tends to $0$ as $d_S (u,u') \to 0$, since both $\|  L_{u,u'} \|_\tv \to 0$ and $\| g_{u'} - g_u \| \to 0$.
\end{proof}

We conclude this section with a more explicit description of the function $F$
from Proposition~\ref{prop:fredholm}, under an additional uniform convergence assumption
on $\cK^n$,
the $n$-fold convolution of $\cK$.
Related results can be found in~\cite[pp.~57--63]{orey}. 

\begin{proposition}
\label{prop:potential}
Suppose that~\eqref{ass:kernel} and \eqref{eq:strips-convergence} hold. Let $g \in \Cb^0 (S)$. Then
$f = F (g) \in \Cb^0 (S)$ defined in Proposition~\ref{prop:fredholm} has the representation
	\begin{equation}
	\label{eq:psi-series-g}
	f ( v ) = \sum_{n=0}^\infty \int_S \cK^n ( u , \ud v ) g(v) .
        \end{equation}
\end{proposition}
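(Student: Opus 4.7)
The plan is to directly verify that the series on the right-hand side of~\eqref{eq:psi-series-g} converges uniformly to an element of $\Cb^0(S)$ that solves the Poisson equation~\eqref{eq:fredholm1}; uniqueness from Proposition~\ref{prop:fredholm} will then force $f = F(g)$.

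The key observation is that since $\pii$ is invariant for $\cK$, one has $\int_S (T_\cK^n g)(u)\, \pii(\ud u) = \int_S g(v)\, \pii(\ud v) = 0$ for every $g \in \Cb^0(S)$; moreover, subtracting $\int g\, \ud \pii = 0$ gives the pointwise bound
\begin{equation}
\label{eq:pot-plan-1}
\bigl| (T_\cK^n g)(u) \bigr| = \left| \int_S g(v) \bigl[ \cK^n(u, \ud v) - \pii(\ud v) \bigr] \right| \leq \|g\| \cdot \bigl\| \cK^n(u, \, \cdot \,) - \pii \bigr\|_\tv .
\end{equation}
Writing $\eps_n := \sup_{u \in S} \| \cK^n(u, \, \cdot \,) - \pii\|_\tv$, assumption~\eqref{eq:strips-convergence} gives $\eps_n \to 0$, so taking the supremum in~\eqref{eq:pot-plan-1} yields $\| T_\cK^n g \| \leq \eps_n \|g\|$ whenever $g \in \Cb^0(S)$.

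The next step upgrades this to geometric decay. Pick $N$ large enough that $\eps_N \leq 1/2$. Since $T_\cK$ maps $\Cb^0(S)$ into itself (again by invariance of $\pii$), iteration of the bound $\|T_\cK^N h\| \leq \eps_N \|h\|$ for $h \in \Cb^0(S)$ gives $\|T_\cK^{kN} g\| \leq 2^{-k}\|g\|$ for all $k \in \N$ and $g \in \Cb^0(S)$; for general $n = kN + r$ with $0 \leq r < N$ one has $\|T_\cK^n g\| \leq \|T_\cK^{kN}(T_\cK^r g)\| \leq 2^{-k} \|g\|$. Hence $\sum_{n=0}^\infty \|T_\cK^n g\|$ is finite, so the series $\sum_{n=0}^\infty T_\cK^n g$ converges uniformly to some $f \in \Cb(S)$ (each partial sum is continuous since \eqref{ass:kernel}(ii) with Lemma~\ref{lem:continuity-against-K} implies $T_\cK^n g \in \Cb(S)$), and passing to the limit in $\int_S (T_\cK^n g)\, \ud \pii = 0$ via dominated convergence gives $f \in \Cb^0(S)$.

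Finally, uniform convergence lets us apply $T_\cK$ term by term: $T_\cK f = \sum_{n=0}^\infty T_\cK^{n+1} g = f - g$, so $f$ solves the Poisson equation~\eqref{eq:fredholm1}. Proposition~\ref{prop:fredholm} says this solution in $\Cb^0(S)$ is unique, so $f = F(g)$ as required. The only mildly delicate point is the geometric decay upgrade, but it reduces to the standard submultiplicativity argument carried out on the invariant subspace $\Cb^0(S)$ where $T_\cK$ acts as a strict contraction after finitely many iterates.
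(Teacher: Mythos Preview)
Your proof is correct and takes a genuinely different route from the paper's. You establish geometric decay of $\|T_\cK^n g\|$ via the submultiplicativity argument on the invariant subspace $\Cb^0(S)$: once $\eps_N \leq 1/2$, iteration gives $\|T_\cK^{kN} h\| \leq 2^{-k}\|h\|$ for all $h \in \Cb^0(S)$, hence absolute (uniform) convergence of the series, after which the Poisson equation is verified term by term and only the \emph{uniqueness} part of Proposition~\ref{prop:fredholm} is needed.

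The paper instead works with the partial sums $f_n = \sum_{k=0}^n T_\cK^k g$, observes that $f_n - T_\cK f_n = g_n$ with $g_n \to g$ in $\Cb(S)$, and then invokes the \emph{continuity} of $F$ (obtained via the closed range theorem in the proof of Proposition~\ref{prop:fredholm}) to deduce that $\sup_n \|f_n\| < \infty$; an equicontinuity estimate and Arzel\`a--Ascoli then give a subsequential limit, which continuity of $F$ identifies as $F(g)$. Your argument is more elementary---it avoids both the closed range theorem and Arzel\`a--Ascoli---and yields the stronger conclusion that the series converges absolutely in $\Cb(S)$ with an explicit rate. The paper's approach, by contrast, leans on the abstract Fredholm machinery already developed and so fits the surrounding exposition more naturally, but does not deliver a rate.
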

\begin{remarks}
\phantomsection
\label{rmks:potential} 
\begin{remenumi}
\item
\label{rmks:potential-a} 
As in Remark~\ref{rmks:psi}\ref{rmks:psi-a},  note that by~\eqref{eq:operator-def}, $\int_S \cK^n ( u , \ud v ) g(v) = T^n_\cK g(v)$ and~\eqref{eq:psi-series-g} is equivalent to~$f = \sum_{n=0}^\infty T_\cK^n g$.
\item
\label{rmks:potential-b}
We emphasize that while the series on the right-hand
side of~\eqref{eq:psi-series-g} converges for $g \in \Cb^0(S)$, 
it does not, in general, make sense to interchange the integral and the sum, since
 the measures $H ( u, \, \cdot \, ) := \sum_{n=0}^\infty \cK^n (u, \, \cdot \,)$ will typically be trivial
in our setting; here $H$ is the \emph{potential kernel} of $\cK$~\cite[p.~41]{revuz}.
\end{remenumi}
\end{remarks}

\begin{proof}[Proof of Proposition~\ref{prop:potential}.]
Suppose that $g \in \Cb^0(S)$. 
For $n \in \ZP$, define
\[
  f_n (u) := \sum_{k=0}^n \int_S \cK^k ( u, \ud v ) g(v) =  \sum_{k=0}^n \int_S \left[\cK^k ( u, \ud v ) -  \pii (\ud v) \right]  g(v) .
\]
Recall that
$\cK^0 (u, B) = \1 { u \in B}$, so that $f_0 = g$. 
Note that $\| \cK^k ( u, \, \cdot\,) - \cK^k ( u', \, \cdot\,) \|_\tv$
is non-increasing in~$k$ (see e.g.~Lemma D.2.10 of~\cite[p.~634]{dmps}),
so~\eqref{ass:kernel}\ref{ass:kernel-ii} implies that, for every $k \in \N$, 
$u \mapsto \cK^k ( u , \, \cdot \, )$ is continuous from
$(S, d_S)$ to $(\Meass(S),
d_{\tv})$, and hence so is 
$u \mapsto \sum_{k=1}^n \cK^k ( u , \, \cdot \, )$. 
Lemma~\ref{lem:continuity-against-K} then shows that
$f_n - f_0 = f_n - g \in \Cb(S)$, and hence $f_n \in \Cb(S)$. Moreover, $\int_S f_n (u) \pii (\ud u) = 0$
by~\eqref{ass:kernel}\ref{ass:kernel-i}, so $f_n \in \Cb^0(S)$ for all $n \in \ZP$. Furthermore, 
\begin{align*}
 \int_S  f_n (v) \cK (u , \ud v) & = \sum_{k=0}^n \int_S \int_S \cK (u , \ud v) \cK^k (v, \ud w )  g(w) \\
 & = \sum_{k=0}^n  \int_S \cK^{k+1} (u, \ud w )  g(w) = f_{n+1} (u) - g(u).
\end{align*}
Thus
\begin{equation}
\label{eq:f-n-g-n}
\int_S \left( f_n (u) - f_n (v) \right) \cK (u , \ud v) = g (u) - \int_S \cK^{n+1} ( u , \ud v) g(v) =: g_n (u) .
\end{equation}
Note that~\eqref{eq:f-n-g-n} is equivalent to $f_n - T_\cK f_n = g_n$. Also note 
that $g_n \in \Cb(S)$ (by Lemma~\ref{lem:continuity-against-K}) and
$\int_S g_n (u) \pii (\ud u) = 0$, so $g_n \in \Cb^0(S)$ for all $n \in \ZP$. 
By assumption~\eqref{eq:strips-convergence}, $g_n \to g$ in $\Cb(S)$ as $n \to \infty$.
In particular, $\sup_n \| g_n \| < \infty$.

By uniqueness of solutions to~\eqref{eq:f-n-g-n} over $\Cb^0(S)$, we have that $f_n = F ( g_n)$ where $F$
is the continuous functional from 
Proposition~\ref{prop:fredholm}. 
Since $F$ is continuous, it is bounded, so $\sup_n \| f_n \| \leq C \sup_n \| g_n \| < \infty$.
Next, we have that
\begin{align*}
f_{n+1} (u) - f_{n+1} (u') & =  g(u) - g(u') + \sum_{k=1}^{n+1} \int_S \left[ \cK^k (u, \ud v) - \cK^k (u', \ud v)  \right] g(v)  \\
& =   g(u) - g(u') + \sum_{k=0}^{n} \int_S \int_S \left[ \cK (u, \ud w) - \cK  (u', \ud w)  \right] \cK^k ( w , \ud v)  g(v)  \\
& =   g(u) - g(u') +   \int_S \left[ \cK (u, \ud w) - \cK  (u', \ud w)  \right] f_n (w) .\end{align*}
It follows that
\begin{align*}
\sup_n \left| f_{n+1} (u) - f_{n+1} (u') \right| \leq \left|  g(u) - g(u')  \right|  + \left\|  \cK (u, \, \cdot \, ) - \cK  (u', \, \cdot \,  ) \right\|_\tv \cdot \sup_n \| f_n \| .
\end{align*}
Thus $f_n$, $n \in \ZP$ are bounded and equicontinuous, and hence relatively compact by the Arzel\`a--Ascoli theorem~\cite[p.~266]{ds}.
This means that any subsequential limit $f$ of $f_n$ is continuous, and so $f = F ( g )$ by continuity of~$F$. Hence all subsequential limits coincide,
and we have $f = \lim_{n \to \infty} f_n = F(g) \in \Cb^0 (S)$, as claimed.
\end{proof}

\section{Semimartingale criteria}
\label{sec:semimartingale}

We obtain our recurrence classification using some semimartingale criteria, related to those presented in~\cite[Ch.~3]{MenPopWad16}, which apply to 
discrete-time adapted processes on $\RP$ without any irreducibility assumptions. 
We present appropriate generalizations that apply to processes on $\RP \times S$.
The following recurrence result is based on Theorem~3.5.8 of~\cite{MenPopWad16}.

\begin{lemma}
\label{lem:recurrence} 
Let $\Sigma = \RP \times S$ for a compact metric space~$S$, and
suppose that $( \xi_n , n\in \ZP)$ is a stochastic process with $\xi_n = (X_n, \eta_n) \in \Sigma$,
 adapted to a filtration $(\cF_n, n \in \ZP)$.
Let $f : \Sigma \to \RP$ be such that $\inf_{u\in S} f(x , u) \to \infty$ as $x \to \infty$.
Suppose that 
$\Exp f ( \xi_n ) < \infty$ for all $n \in \ZP$, and 
there
exists $r_0 \in \RP$ for which, for all $n \in \ZP$,
\begin{align*}
\Exp [ f (\xi_{n+1} ) - f(\xi_n) \mid \cF_n] & \leq 0, \text{ on } \{  X_n  \geq r_0 \}.
\end{align*}
Then if $\Pr ( \limsup_{n \to \infty}  X_n  = \infty ) =1$,  $\Pr ( \liminf_{n \to \infty} X_n \leq r_0 ) = 1$.
\end{lemma}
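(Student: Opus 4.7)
\medskip

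\noindent\textbf{Proof plan.} The plan is to argue by contradiction, applying the supermartingale convergence theorem to a suitably stopped version of $f(\xi_n)$ and exploiting the coercivity $\inf_{u\in S} f(x,u) \to \infty$ together with non-confinement $\limsup_n X_n = \infty$.

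First I would fix $N \in \ZP$ and introduce the stopping time
\[ \tau_N := \inf \{ n \geq N : X_n < r_0 \} , \]
with the convention that $\inf \emptyset = \infty$. The core computation is to verify that $M_k := f( \xi_{(N+k) \wedge \tau_N} )$, $k \geq 0$, is a non-negative supermartingale with respect to $(\cF_{(N+k)\wedge \tau_N})$. This reduces to the identity
\[ f(\xi_{(n+1)\wedge \tau_N}) = f(\xi_{n+1}) \1{\tau_N > n} + f(\xi_{n \wedge \tau_N}) \1{\tau_N \leq n} , \]
combined with the fact that $\{\tau_N > n\} \in \cF_n$ and that on $\{\tau_N > n\}$ (for $n \geq N$) we have $X_n \geq r_0$, so the hypothesis gives $\Exp[ f(\xi_{n+1}) \mid \cF_n ] \leq f(\xi_n)$ on that event. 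The assumption $\Exp f(\xi_n) < \infty$ ensures $\Exp M_k < \infty$ for each $k$, so Doob's supermartingale convergence theorem applies and $M_k \to L$ a.s., for some finite random variable $L \geq 0$.

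Next I would observe that on the event $\{\tau_N = \infty\}$ we have $M_k = f(\xi_{N+k})$ for every $k$, so $f(\xi_n) \to L < \infty$ a.s.\ on that event. On the other hand, by non-confinement there exists, a.s., a (random) subsequence $(n_j)$ with $X_{n_j} \to \infty$; combined with $\inf_{u \in S} f(x,u) \to \infty$, this forces $f(\xi_{n_j}) \to \infty$. The two conclusions are incompatible, so
\[ \Pr ( \{ \tau_N = \infty \} \cap \{ \limsup_n X_n = \infty \} ) = 0 , \]
and since $\Pr( \limsup_n X_n = \infty) = 1$ by hypothesis, we deduce $\Pr( \tau_N < \infty ) = 1$ for every $N \in \ZP$.

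Finally, $\{\tau_N < \infty \text{ for every } N\} \subseteq \{ X_n < r_0 \text{ for infinitely many } n \} \subseteq \{ \liminf_n X_n \leq r_0 \}$, so the conclusion follows. The only mildly delicate step is the supermartingale verification in step two, since the drift condition is one-sided (it holds only on $\{X_n \geq r_0\}$) and the process is assumed only adapted, not Markov; this is handled by the indicator decomposition above, which respects the $\cF_n$-measurability of $\{\tau_N > n\}$. Everything else is standard.
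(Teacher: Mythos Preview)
Your proof is correct. Both you and the paper rely on the stopped process $f(\xi_{n\wedge\tau})$ being a non-negative supermartingale, but you extract the conclusion differently. The paper introduces a \emph{second} stopping time $\sigma_n := \min\{m\geq n : X_m \geq r\}$ for $r>r_0$, bounds $\Pr(\sigma_n < \lambda_n)$ via Fatou's lemma by $\Exp f(\xi_n)/\inf_{y\geq r,\,u} f(y,u)$, and then sends $r\to\infty$ using coercivity to force $\Pr(\lambda_n<\infty)=1$. You instead stop only at $\tau_N$, invoke Doob's supermartingale convergence theorem to get an a.s.\ finite limit of $f(\xi_{(N+k)\wedge\tau_N})$, and argue by contradiction: on $\{\tau_N=\infty\}$ the full sequence $f(\xi_n)$ would converge, yet non-confinement plus coercivity forces $f(\xi_{n_j})\to\infty$ along a subsequence. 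Your route is marginally cleaner (one stopping time, no limiting $r$), while the paper's two-barrier argument yields the quantitative estimate $\Pr(\sigma_n<\lambda_n)\leq \Exp f(\xi_n)/\inf_{y\geq r} f(y,\cdot)$, which can be useful when one wants explicit tail bounds rather than just the qualitative recurrence statement.
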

\begin{proof}
By hypothesis, $\Exp f(\xi_n) < \infty$ for all $n$. Fix $n \in \ZP$ and let $\lambda_{n} := \min \{ m \geq n :  X_m  \leq r_0 \}$
and, for some $r > r_0$, set $\sigma_n := \min \{ m \geq n :  X_m  \geq  r \}$.
Since $\limsup_{n \to \infty}  X_n  = \infty$ a.s., we have that $\sigma_n < \infty$, a.s.
Then $f(\xi_{m \wedge \lambda_n \wedge \sigma_n } )$, $m \geq n$, is a non-negative supermartingale
with $\lim_{m \to \infty} f(\xi_{m \wedge \lambda_n \wedge \sigma_n } ) = f(\xi_{\lambda_n \wedge \sigma_n } )$, a.s. 
By Fatou's lemma
and the fact that $f$ is non-negative,
\[
  \Exp f (\xi_n) \geq \Exp f(\xi_{\lambda_n \wedge \sigma_n } ) \geq \Pr ( \sigma_n < \lambda_n ) \inf_{(y,u) :  y  \geq r} f (y,u) .
\]
So
\begin{align*}
 \Pr \left( \inf_{m \geq n}  X_m  \leq r_0 \right)
& \geq \Pr (\lambda_n < \infty) \geq \Pr (\lambda_n < \sigma_n )  \geq 1 - \frac{\Exp f(\xi_n)}{\inf_{(y,u) :  y  \geq r} f (y,u) } .\end{align*}
Since $r > r_0$ was arbitrary, and $\inf_{(y,u)  :  y  \geq r} f (y,u) \to \infty$ as $r \to \infty$,
it follows that, for fixed $n \in \ZP$,
$\Pr ( \inf_{m \geq n}   X_m  \leq r_0  ) = 1$. 
Since this holds for all $n \in \ZP$, the result follows.
\end{proof}

The corresponding transience result is based on Theorem~3.5.6 of~\cite{MenPopWad16}.

\begin{lemma}
\label{lem:transience} 
Let $\Sigma = \RP \times S$ for a compact metric space~$S$, and
suppose that $( \xi_n , n\in \ZP)$ is a stochastic process with $\xi_n = (X_n, \eta_n) \in \Sigma$,
 adapted to a filtration $(\cF_n, n \in \ZP)$.
Let $f : \Sigma \to \RP$ be bounded, with $\sup_{u \in S} f(x,u) \to 0$ as $ x  \to \infty$, and $\inf_{(x,u) :  x  \leq r} f(x,u) >0$
for all $r \in \RP$. Suppose that there
exists $r_0 \in \RP$ for which, for all $n \in \ZP$,
\begin{align*}
\Exp [ f (\xi_{n+1} ) - f(\xi_n) \mid \cF_n] & \leq 0, \text{ on } \{  X_n \geq r_0 \}.\end{align*}
Then if $\Pr ( \limsup_{n \to \infty}  X_n  = \infty ) =1$,  $\Pr ( \lim_{n \to \infty}  X_n = \infty ) = 1$.
\end{lemma}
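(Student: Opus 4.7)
The plan is to use a standard stopped-supermartingale argument, analogous to the proof of Lemma~\ref{lem:recurrence} but extracting a bound on the hitting probability rather than finiteness of an expectation. For each $r \geq r_0$ and each fixed $n \in \ZP$, define the stopping time $\tau_{n,r} := \min\{m \geq n : X_m \leq r\}$ (with $\min \emptyset = \infty$). On the event $\{m < \tau_{n,r}\}$, we have $X_m > r \geq r_0$, so the assumed drift bound applies. Hence the stopped process $M_m := f(\xi_{m \wedge \tau_{n,r}})$, $m \geq n$, is a nonnegative supermartingale with respect to $(\cF_m)_{m \geq n}$, and therefore converges almost surely to some $M_\infty \geq 0$ with $\Exp [M_\infty \mid \cF_n] \leq f(\xi_n)$.

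On $\{\tau_{n,r} < \infty\}$ we have $X_{\tau_{n,r}} \leq r$, so $M_\infty = f(\xi_{\tau_{n,r}}) \geq c(r) := \inf_{(y,u) : y \leq r} f(y,u)$, which is strictly positive by hypothesis. This yields the key estimate
\begin{equation}
\Pr ( \tau_{n,r} < \infty \mid \cF_n ) \leq c(r)^{-1} f(\xi_n), \as
\end{equation}
Now use the hypothesis that $\sup_{u \in S} f(x,u) \to 0$ as $x \to \infty$: given $\eps >0$ and $r \geq r_0$, choose $R = R(\eps,r)$ so large that $\sup_{u \in S} f(x,u) \leq \eps c(r)$ for all $x \geq R$. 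On the event $\{X_n \geq R\}$, the right-hand side of the displayed inequality is at most $\eps$.

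Next, we transfer this to the first time $X$ exceeds $R$. Let $\nu := \min\{n \in \ZP : X_n \geq R\}$, a stopping time with $\nu < \infty$ a.s.\ by non-confinement. Set $\tau := \min\{m \geq \nu : X_m \leq r\}$. Since there is no Markov assumption, I decompose over $\{\nu = k\}$: for each $k \in \ZP$,
\begin{equation}
\Pr ( \tau < \infty, \nu = k ) = \Exp \bigl[ \1{\nu =k} \Pr ( \tau_{k,r} < \infty \mid \cF_k ) \bigr] \leq \eps \Pr ( \nu = k ) ,
\end{equation}
using that on $\{\nu = k\}$ we have $X_k \geq R$ and $\tau = \tau_{k,r}$. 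Summing over $k$ gives $\Pr ( \tau < \infty ) \leq \eps$, so with probability at least $1 - \eps$ we have $X_m > r$ for all $m \geq \nu$.

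Finally, let $\eps \downarrow 0$ to deduce $\Pr ( \liminf_{n \to \infty} X_n \geq r ) = 1$ for each $r \geq r_0$, and then let $r \to \infty$ to conclude $\Pr ( \lim_{n \to \infty} X_n = \infty ) = 1$. The only real subtlety, and hence the main obstacle, is that we do not have the (strong) Markov property available to transport the fixed-time estimate to the stopping time $\nu$; this is handled by the decomposition over $\{\nu = k\}$ and conditioning on $\cF_k$, which only uses the filtration structure and the supermartingale property already obtained by optional stopping.
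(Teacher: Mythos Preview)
Your proof is correct and follows essentially the same stopped-supermartingale route as the paper: the paper fixes a return level~$r_1$, starts the supermartingale $f(\xi_{n\wedge\lambda_r})$ at the hitting time $\sigma_r := \min\{n : X_n \geq r\}$, and applies optional stopping directly at~$\cF_{\sigma_r}$, whereas you first derive the fixed-time bound $\Pr(\tau_{n,r}<\infty\mid\cF_n)\le c(r)^{-1}f(\xi_n)$ and then decompose over $\{\nu=k\}$ to transport it to the random time~$\nu$. The two arguments are equivalent reorganizations of the same idea, and your explicit decomposition is a clean way to handle the absence of the strong Markov property.
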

\begin{proof}
Since $f$ is bounded, $\Exp f(\xi_n) < \infty$ for all $n$. Fix $n \in \ZP$ and $r_1 \geq r_0$.
For $r \in \ZP$ let $\sigma_r := \min \{ n \in \ZP :  X_n  \geq r \}$.
Since $\Pr ( \limsup_{n \to \infty}  X_n  = \infty ) =1$, we have $\sigma_r < \infty$, a.s., for every $r \in \ZP$. 
Let $\lambda_{r} := \min \{ n \geq \sigma_r :  X_n  \leq r_1 \}$.
Then $f(\xi_{n \wedge \lambda_r} )$, $n \geq \sigma_r$, is a non-negative supermartingale,
which converges, on $\{ \lambda_r < \infty \}$, to $f(\xi_{\lambda_r} )$. By optional stopping
(e.g.~Theorem~2.3.11 of~\cite{MenPopWad16}), a.s.,
\begin{align*}
  \sup_{(x,u) :  x  \geq r} f(x,u) \geq f ( \xi_{\sigma_r}) & \geq \Exp [ f( \xi_{\lambda_r} ) \1 { \lambda_r < \infty }  \mid \cF_{\sigma_r} ] \\
  & \geq \Pr ( \lambda_r < \infty \mid \cF_{\sigma_r} ) \inf_{(x,u) :  x  \leq r_1} f(x,u)  .
\end{align*}
So 
\begin{align*}
\Pr ( \lambda_r < \infty ) \leq \frac{\sup_{(x,u) :  x  \geq r} f(x,u)}{\inf_{(x,u) :  x  \leq r_1} f(x,u) } ,\end{align*}
which tends to $0$ as $r \to \infty$, by our hypotheses on $f$.
Thus,
\[
 \Pr \left( \liminf_{n \to \infty}   X_n  \leq r_1 \right) = \Pr \left( \cap_{r \in \ZP} \left\{ \lambda_r < \infty \right\} \right)
 = \lim_{r \to \infty} \Pr ( \lambda_r < \infty ) = 0 .
\]
Since $r_1 \geq r_0$ was arbitrary, we get the result. 
\end{proof}

\section*{Acknowledgements}
\addcontentsline{toc}{section}{Acknowledgements}

 The authors gratefully acknowledge two anonymous referees,
 whose constructive comments and suggestions have led to
 significant improvements in this paper. This work was supported by the Engineering and Physical Sciences Research Council [EP/W00657X/1].


\begin{thebibliography}{99}

\bibitem{alsmeyer}
G.\ Alsmeyer.
Recurrence theorems for Markov random walks. 
\emph{Probab.\ Math.\ Statist.}\ {\bf 21} (2001) 123--134.

\bibitem{abs}
O.\ Angel, K.\ Burdzy, and S.\ Sheffield.
Deterministic approximations of random reflectors.
\emph{Trans.\ Amer.\ Math.\ Soc.}\ {\bf 365} (2013) 6367--6383.

\bibitem{aim}
 S.\ Aspandiiarov, R.\ Iasnogorodski, and M.\ Menshikov.
Passage-time moments for nonnegative stochastic processes and an application to
 reflected random walks in a quadrant.
 \emph{Ann.\ Probab.}\ {\bf 24} (1996) 932--960.
 
\bibitem{bbg}
C.\ Barnes, K.\ Burdzy, and C.-E.\ Gauthier.
Billiards with Markovian reflection laws.
\emph{Electron.\ J.\ Probab.}\ {\bf 24} (2019) 1--32.

\bibitem{bg}
K.\ Burdzy and C.-E.\ Gauthier.
Knudsen gas in a flat tire.
\emph{Ann.\ Appl.\ Probab.}\ {\bf 29} (2019) 217--263.

\bibitem{BurTad} 
K.\ Burdzy and T.\ Tadi\'c,
Can one make a laser out of cardboard?
\emph{Ann.\ Appl.\ Probab.}\ {\bf 27} (2017) 1951--1991.

\bibitem{cpsv} 
F.\ Comets, S.\ Popov, G.M.\ Sch\"utz, and M. Vachkovskaia.
Billiards in a general domain with random reflections.
  \emph{Arch.\ Rat.\ Mech.\ Anal.}\ {\bf 191} (2009) 497--537.

\bibitem{dv}
A.B.\ Dieker and S.S.\ Vempala.
Stochastic billiards for sampling from the
boundary of a convex set.
\emph{Math.\ Oper.\ Res.}\ {\bf 40} (2015) 888--901.

\bibitem{dmps}
R.\ Douc, E.\ Moulines, P.\ Priouret, and P.\ Soulier.
\emph{Markov Chains}.
Springer, 2018.

\bibitem{ds}
N.\ Dunford and J.T.\ Schwartz.
\emph{Linear Operators. Part I: General Theory.} Interscience, New York, 1958.

\bibitem{evans}
S.N.\ Evans.
 Stochastic billiards on general tables. {\em Ann.\ Appl.\ Probab.}\ {\bf 11} (2001) 419--437.

\bibitem{falin}
G.I.\ Falin.
Ergodicity of random walks in the half-strip.
\emph{Math.\ Notes} {\bf 44} (1988) 606--608. Translated from \emph{Mat.\ Zametki} {\bf 44} (1988) 225--230 (Russian).

\bibitem{FayMalMen95}
G.\ Fayolle, V.A.\ Malyshev, and M.V.\ Menshikov.
\emph{Topics in the Constructive Theory of Countable {M}arkov Chains.}
Cambridge University Press, Cambridge, 1995.

\bibitem{feres}
R.\ Feres.
Random walks derived from billiards.
In B.~Hasselblatt (ed.), \emph{Dynamics, Ergodic Theory and Geometry}, Mathematical Sciences Research Institute Publications, pp.~179--222.
Cambridge University Press, Cambridge, 2007.

\bibitem{fy}
R.\ Feres and G.\ Yablonsky.
Knudsen's cosine law and random billiards.
\emph{Chem.\ Engineer.\ Sci.}\ {\bf 59} (2004) 1541--1556.

\bibitem{GeoWad14}
N.\ Georgiou and A.R.\ Wade.
Non-homogeneous random walks on a semi-infinite strip.
\emph{Stochastic Process.\ Appl.} {\bf 124} (2014) 3179--3205.

\bibitem{hamilton}
J.D.\ Hamilton.
A new approach to the economic analysis of nonstationary time series and the business cycle.
\emph{Econometrica} {\bf 57} (1989) 357--384.

\bibitem{kall} 
O.\ Kallenberg.
\emph{Foundations of Modern Probability}.
 2nd ed., Springer, New York, 2002.  

\bibitem{ka}
L.V.\ Kantorovich and G.P.\ Akilov.
\emph{Functional Analysis.} 2nd ed., Pergamon Press, Oxford, 1982.

\bibitem{Kn}
M.\ Knudsen.
\emph{Kinetic Theory of Gases -- Some Modern Aspects.}
Methuen's Monographs on Physical Subjects,
London, 1952.

\bibitem{ks}
A.\ Kr\'amli and D.\ Sz\'asz.
Random walks with internal degrees of freedom. I. Local
limit theorems. 
\emph{Z.\ Wahrsch.\ Verw.\ Gebiete} {\bf 63} (1983) 85--95.

\bibitem{lr}
S.\ Lalley and H.\ Robbins.
Stochastic search in a convex region.
\emph{Probab.\ Theory Rel.\ Fields} {\bf 77} (1988) 99--116. 

\bibitem{lamp}
J.\ Lamperti.
Criteria for the recurrence and transience of stochastic processes I. 
\emph{J.\ Math.\ Anal.\ Appl.}\ {\bf 1} (1960) 314--330.

\bibitem{Lo}
C.H.\ Lo.
\emph{On Some Random Walk Problems.}
PhD thesis, Durham University, 2017.

\bibitem{LoWade17}
C.H.\ Lo and A.R.\ Wade.
Non-homogeneous random walks on a half strip with generalized Lamperti drifts. 
\emph{Markov Process.\ Related Fields} {\bf 23} (2017) 125--146.

\bibitem{malyshev}
V.A.\ Malyshev.
Homogeneous random walks on the product of finite set and a halfline.
In 
A.N.~Kolmogorov (ed.), 
\emph{Veroyatnostnye Metody Issledovania} (Probability Methods of Investigation), pp.~5--13. Moscow State University, Moscow,
1972.

\bibitem{MenPopWad16}
M.\ Menshikov, S.\ Popov, and A.\ Wade,
\emph{Non-homogeneous Random Walks}.
Cambridge University Press, Cambridge, 2016.

\bibitem{mvw}
M.V.\ Menshikov, M.\ Vachkovskaia, and A.R.\ Wade.
Asymptotic behaviour of randomly reflecting billiards in unbounded tubular domains.
\emph{J.\ Statist.\ Phys.}\ {\bf 132} (2008) 1097--1133.

\bibitem{neuts}
M.F.\ Neuts.
 \emph{Structured Stochastic Matrices of M/G/1 Type and Their Applications}. Marcel Dekker, Inc., New York, 1989.

\bibitem{orey}
S.\ Orey.
 \emph{Limit Theorems for Markov Chain Transition Probabilities}.
Van Nostrand Reinhold, London, 1971.

\bibitem{revuz} 
D.\ Revuz.
\emph{Markov Chains}.
North-Holland, Amsterdam, 1984.

\bibitem{tabachnikov} S.\ Tabachnikov.
  \emph{Billiards}.
Soci\'et\'e Math\'ematique de France,
Paris, 1995.

\end{thebibliography}
\end{document}